\documentclass[reqno]{amsart}
\usepackage[foot]{amsaddr}

\usepackage{amsmath,amssymb,amsfonts,mathrsfs}
\usepackage{comment}
\usepackage{bbold}
\usepackage{tikz-cd}
\usepackage{csquotes}
\usepackage[english]{babel}
\usepackage[utf8]{inputenc}
\usepackage[T1]{fontenc}
\usepackage{mathtools}
\usepackage{lmodern}

\usepackage{cancel}

\usepackage[
top=3.2cm,
bottom=3.2cm,
left=3.2cm,
right=3.2cm,
marginparwidth=2.5cm
]{geometry}

\usepackage[colorinlistoftodos]{todonotes}
\usepackage[hypertexnames=false]{hyperref}
\hypersetup{
    colorlinks,
    linkcolor={red!50!black},
    citecolor={blue!50!black},
    urlcolor={blue!80!black},
}

\usepackage[shortlabels]{enumitem}
\setlist[1]{itemsep=2pt}
\usepackage[capitalise]{cleveref}

\usepackage{autonum} %to number only \eqref'ed equations in a cref compatible way

\newtheorem{theorem}{Theorem}[section]
\newtheorem{prop}[theorem]{Proposition}
\newtheorem{corollary}[theorem]{Corollary}
\newtheorem{lemma}[theorem]{Lemma}
\newtheorem{defi}[theorem]{Definition}

\theoremstyle{remark} 
\newtheorem{remark}[theorem]{Remark}

\newtheorem{rmk}[theorem]{Remark}

\newcommand{\ve}{\varepsilon}

\newcommand{\red}{\color{red}}

\newcommand{\cut}{\mathrm{cut}}

\title[]{Classification of K-contact forms and spectral invariants of their sub-Laplacians}

\author[E. Bellini]{Eugenio Bellini}
\address{Dipartimento di Matematica "Tullio Levi-Civita", Universit\`a degli Studi di Padova, via Trieste 63, 35131 Padova (PD), Italy\\ \textbf{\emph{eugenio.bellini@unipd.it}}}

\date{}

\begin{document}
\maketitle
\begin{abstract}
A contact form is called K-contact if its Reeb vector field is Killing with respect to some Riemannian metric. In this paper we classify K-contact forms whose Reeb vector field admits at least one non-periodic orbit, on three-dimensional manifolds. We prove that if a compact three-manifold carries such a contact form, then it is diffeomorphic to a lens space and admits exactly two periodic Reeb orbits, whose periods have irrational ratio. We further classify, up to (global) diffeomorphism, these contact forms in terms of the periods of their closed Reeb orbits. We conclude by relating these periods to spectral invariants of the sub-Laplacian, confirming a conjecture of \cite{ClassicalQuantum23} in the irregular K-contact case. 
\end{abstract}
\medskip
\noindent\textbf{Mathematics Subject Classification (2020):}
53C17 (primary), 53D10.
\tableofcontents
\section{Introduction}\noindent
A smooth one-form $\alpha$ on a three dimensional manifold $M$ is called a contact form if $\alpha\wedge d\alpha\neq 0.$ If $M$ is oriented by $\alpha\wedge d\alpha$, then $\alpha$ is called positive. To any contact form there is an associated vector field $R$ called the Reeb field, uniquely determined by the following two equations
\begin{equation}
	\alpha(R)=1,\qquad d\alpha(R,\cdot\,)=0.
\end{equation}
Contact forms can be divided into two classes: quasi-regular and irregular \cite{BoyerGalicki2008} (the term \enquote{quasi-regular} is used instead of \enquote{regular} because the latter classically refers to Boothby-Wang contact forms \cite{Boothby}). A contact form is called quasi-regular if all the orbits of its Reeb vector field are periodic, and irregular if at least one Reeb orbit is non-periodic. Quasi-regular contact forms are completely classified, and their Reeb vector fields generate locally free circle actions, giving rise to Seifert fibrations \cite{KegelLange2021,CristofaroMazz2020,Sullivan78,Wadsley75}. In particular, any quasi-regular contact form is K-contact, i.e., its Reeb vector field is Killing with respect to a suitable Riemannian metric \cite{Blair,BoyerGalicki2008}.
From this perspective, it appears natural to study irregular contact forms admitting a Reeb-invariant metric, that is, irregular K-contact forms. The purpose of this paper is to classify irregular K-contact forms in dimension three. 

Our first result is the following. 
\begin{theorem}\label{thm:main1}
Let $(M,\alpha)$ be a compact three-dimensional manifold with a positive irregular K-contact form. Then $M$ is diffeomorphic to a lens space. Moreover, the Reeb vector field of $\alpha$ has precisely two periodic orbits, and the ratio of their minimal periods is irrational.
\end{theorem}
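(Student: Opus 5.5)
The plan is to pass from the Reeb flow to a torus action. Since $R$ is Killing for some metric $g$ on the compact manifold $M$, the isometry group $\mathrm{Isom}(M,g)$ is a compact Lie group. The flow $\{\phi_t\}$ of $R$ is a one-parameter subgroup of it, so its closure $T=\overline{\{\phi_t\}}$ is a compact connected abelian Lie group, that is, a torus $T^k$.

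\textbf{Step 1: $k=2$.} Each element of $T$ preserves $\alpha$. Indeed, $\phi_t^*\alpha=\alpha$ because $\mathcal L_R\alpha=0$, and preservation of $\alpha$ passes to limits. If $k=1$, the flow would be periodic, which contradicts irregularity, so $k\ge 2$. For the upper bound, take the Lie algebra $\mathfrak t$ of $T$. Any $X\in\mathfrak t$ satisfies $\mathcal L_X\alpha=0$, and all such fields commute with $R$. The map $X\mapsto \alpha(X)$ gives a moment-map type function $\mathfrak t\to C^\infty(M)$, and Cartan's formula gives $d(\alpha(X))=-\iota_X d\alpha$. On the open dense set where the $T$-orbits have maximal dimension $k$, these orbits are isotropic for $d\alpha$ restricted to $\ker\alpha$, except in the $R$ direction. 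Since $\ker\alpha$ is 2-dimensional and $d\alpha$ is nondegenerate on it, an isotropic subspace of $\ker\alpha$ has dimension at most $1$. Hence $k\le 2$, so $T=T^2$ acts effectively on $M^3$ and preserves $\alpha$.

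\textbf{Step 2: topology.} Effective $T^2$-actions on closed 3-manifolds are classified by Orlik--Raymond. I will show that the orbit space $M/T^2$ is a closed interval whose two endpoints correspond to circle orbits. For this, consider $f_X=\alpha(X)$ for a fixed $X$ and the function $\mu:M\to\mathfrak t^*$, $\mu(p)(X)=\alpha_p(X)$. This map is $T$-invariant, so it descends to $M/T$. Its image avoids $0$, because $\alpha(R)=1$. It is a contact moment map in the sense of Lerman, and Lerman's classification of toric contact 3-manifolds applies. A contact toric 3-manifold with a free-on-an-open-set $T^2$-action is either a $T^2$-bundle over $S^1$ (orbit space a circle) or a lens space (orbit space an interval). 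In the first case every orbit is a 2-torus. The Reeb field lies in $\mathfrak t$, and on a torus orbit $\alpha$ restricts to a closed 1-form, since the $T$-invariant part of $d\alpha$ vanishes on the isotropic orbit. On these bundle cases the Reeb vector of an invariant contact form is known to be nonvanishing in the torus direction with no singular orbits. I would rule this case out by exploiting that $R=\sum a_iX_i$ has constant coefficients. Then $d\alpha(R,\cdot)=0$ forces $d\langle\mu,a\rangle=0$, so $\langle\mu,a\rangle\equiv1$, and $\mu$ takes values in an affine line. Lerman's convexity for the circle case says that $\mu/|\mu|$ must wind, with rank condition $\mu\wedge d\mu\ne0$. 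This contradicts $\mu$ lying on the line $\langle\mu,a\rangle=1$ when the orbit space is a circle, because the argument of $\mu$ would have to be monotone and surjective onto directions, while the line only covers an open half-circle. This step is the main obstacle and needs care. Granting it, $M/T$ is an interval, and $M$ is a union of two solid tori glued along their boundary, which is a lens space (with $S^3$ and $S^1\times S^2$ as degenerate cases; the contact condition and the half-plane argument should exclude $S^1\times S^2$, or it counts as $L(0,1)$ depending on convention).

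\textbf{Step 3: periodic orbits.} A Reeb orbit is closed iff the closure of the $R$-flow line through its point is one-dimensional. On principal orbits $T^2$ acts freely, and $R$ generates a dense line because $\{\phi_t\}$ is dense in $T$, so those orbits are non-periodic. The only remaining orbits are the two exceptional circles over the endpoints, each of which is $R$-invariant and hence a periodic orbit. The periods are obtained as follows. At an endpoint, a stabilizer circle with generator $v_i\in\mathbb Z^2$ acts, and $R=a$ induces on $T/\langle v_i\rangle\cong S^1$ a period $T_i=1/|\det(a,v_i)|$ up to normalization. The ratio $T_1/T_2=|\det(a,v_2)|/|\det(a,v_1)|$ is irrational exactly because $a$ spans a dense line, meaning $a$ has irrationally related components. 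Here $v_1,v_2$ are linearly independent integer vectors, since $M$ is not $S^1\times S^2$ with parallel stabilizers (the case excluded above).
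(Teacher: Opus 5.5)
Your route is genuinely different from the paper's, and it is essentially sound. The paper uses torus symmetry only through stabilizers of invariant tori (\cref{lem:G_is_T}). Globally it works with the normal exponential map of one periodic orbit $\ell$, whose existence it first proves via momentum maps. It shows, by density of $\vec R$ on the normal tori, that the cut time is constant. It then proves that $x\mapsto\mathrm{vol}(B_{\delta(x)}(\ell))$ is a smooth K-contact momentum map whose maximum set, the cut locus, is a single periodic orbit. Finally it glues the two normal tubes into an explicit toroidal atlas with transition $(1-r,-q\theta+mz,p\theta+sz)$, and irrationality follows from $\phi_0=\frac1p(\tau_0/\tau_1-q)$ together with \cref{lem:rot_determines_flow}.

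You instead recognize $(M,\alpha)$ as a toric contact manifold of Reeb type: the closure of the flow is $T^2$ and $\langle\mu,a\rangle\equiv1$. You then read everything off the orbit space and the isotropy weights. This is shorter and more conceptual, and it needs no a priori periodic orbit, but it leans on the Orlik--Raymond/Lerman structure theory. The paper's argument is self-contained. As by-products it yields the normal form of $\alpha$ near $\ell_0$, the identity $\mathrm{vol}(M)=p\tau_0\tau_1$, and the rotation numbers, all of which are reused for \cref{thm:classification} and \cref{thm:spectral_inv}.

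One step you leave unproved is load-bearing: excluding parallel isotropy weights $v_1=\pm v_2$, which is the $S^1\times S^2$ case. In Step 2 you only say it ``should'' be excluded, and in Step 3 you treat it as done. This is not a matter of convention. With $v_1=\pm v_2$ your own formula gives equal periods, so the irrationality claim would fail.

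Your half-plane observation closes the gap.
\begin{enumerate}
\item On the free part $T^2\times(0,1)$, write $\alpha=f_1(t)\,d\theta_1+f_2(t)\,d\theta_2+h(t)\,dt$. Then $\alpha\wedge d\alpha=(f_1f_2'-f_2f_1')\,d\theta_1\wedge dt\wedge d\theta_2$, so a continuous argument of $\mu=(f_1,f_2)$ is strictly monotone on $(0,1)$.
\item Since $\langle\mu,a\rangle\equiv1$, $\mu$ stays in an open half-plane. Hence equal values of $\mu$ force equal lifted arguments.
\item At the endpoint circle with isotropy generated by $v_i$, the fundamental field of $v_i$ vanishes, so $\langle\mu,v_i\rangle=0$ there. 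Thus the endpoint value of $\mu$ is the unique point of $\{\langle\cdot,v_i\rangle=0\}\cap\{\langle\cdot,a\rangle=1\}$; it is unique because $a$ is not parallel to $v_i$.
\item If $v_1=\pm v_2$, the two endpoint values coincide, contradicting strict monotonicity of the argument.
\end{enumerate}
So $p=|\det(v_1,v_2)|\geq 1$, and Step 3 goes through.

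The same computation with $t\in S^1$ is exactly your winding argument against the free case. That argument is correct as stated, because principal $T^2$-bundles over $S^1$ are trivial; the accompanying remarks about closed restrictions of $\alpha$ are unnecessary.

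Finally, in Step 1 make the isotropy explicit: $d\alpha(X,Y)=X\alpha(Y)-Y\alpha(X)-\alpha([X,Y])=0$ for $X,Y\in\mathfrak t$. Also note that generic orbits have dimension $k$, because an effective torus action has trivial principal isotropy.
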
\noindent
 \noindent
 The proof of \cref{thm:main1} is based on the geometry of tubular neighborhoods of periodic Reeb orbits. 
 Using the Reeb-invariant metric provided by the K-contact condition, we associate to each periodic orbit a volume function that measures the contact volume of its metric tubular neighborhoods (see \cref{ssec:vol_fun}). 
 In the irregular K-contact case we show that this function is smooth and that its critical set consists entirely of periodic Reeb orbits. 
 We then prove that the critical set is precisely the union of two periodic orbits and, by gluing their tubular neighborhoods, we deduce that $M$ is diffeomorphic to a lens space. 
 The irrationality of the ratio of the minimal periods follows from a relation between these periods and the eigenvalues of the linearized Poincar\'e return maps along the periodic orbits.

Next, we would like to state a classification result. Recall that two contact forms $\alpha,\beta$ are called strictly contactomorphic if there exists a diffeomorphism $\varphi:M\to M$ such that 
\begin{equation}
	\varphi^*\alpha=\beta.
\end{equation}
The second result of this paper is the following.
\begin{theorem}\label{thm:classification}
	Let $p,q$ be coprime integers, $p>0$, and let $\tau_0,\tau_1\in\mathbb{R}^+$ be such that $\tau_0/\tau_1\in\mathbb R\setminus\mathbb Q$.
	\begin{enumerate}
		\item[(i)] If $q^2\equiv 1 \,\mathrm{mod}\,p$, then there exists a unique irregular positive $K$-contact form on $L(p,q)$ admitting two periodic Reeb orbits of minimal periods $\tau_0,\tau_1$, up to strict contactomorphism.
		\item[(ii)] If $q^2\not\equiv 1 \,\mathrm{mod}\,p$, then there exist precisely two irregular positive $K$-contact forms on $L(p,q)$ admitting two periodic Reeb orbits of minimal periods $\tau_0,\tau_1$, up to strict contactomorphism.
	\end{enumerate}
\end{theorem}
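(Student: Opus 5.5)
The plan is to reduce everything to toric contact geometry. Since $\alpha$ is K-contact, the Reeb flow preserves a Riemannian metric $g$, so its closure in $\mathrm{Isom}(M,g)$ is a compact abelian Lie group. Because $\alpha$ is irregular, the Reeb flow is not periodic, and by \cref{thm:main1} its closure is a $2$-torus $T^2$ acting effectively on $M$ and preserving $\alpha$. I will therefore treat $(M,\alpha)$ as a toric contact manifold of Reeb type, with Reeb field $R$ a generic element of the Lie algebra $\mathfrak t$.

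The first step is a normal form. Take the two periodic orbits $\gamma_0,\gamma_1$ from \cref{thm:main1}. On the complement of $\gamma_0\cup\gamma_1$ the torus acts freely, and the moment map $\mu\colon M\to\mathfrak t^*$, $\mu(x)(X)=\alpha(X_x)$, has image a segment in the affine line $\{\mu(R)=1\}$ (the moment cone cut at Reeb level one). Its endpoints correspond to $\gamma_0$ and $\gamma_1$, and the isotropy circles at the endpoints are given by primitive vectors $v_0,v_1$ in the lattice $\Lambda\subset\mathfrak t$. Equivariant Darboux/Lerman-type uniqueness should show that $(M,\alpha)$ is determined up to strict $T^2$-equivariant contactomorphism by the data $(\Lambda; v_0,v_1; R)$, up to $GL(2,\mathbb Z)$. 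Equivalently, $\alpha$ is strictly contactomorphic to a quotient of an ellipsoid form $\frac{i}{2}\sum a_j(z_jd\bar z_j-\bar z_jdz_j)$ on $S^3$ by a $\mathbb Z_p$ action $(z_1,z_2)\mapsto(\zeta z_1,\zeta^q z_2)$.

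The second step is to compute invariants. In a basis with $v_0=e_1$, the constraint that $M=L(p,q)$ forces $v_1=(q',p)$ type data up to the usual lens space normalization. The periods $\tau_0,\tau_1$ are then determined by the pairing of $R$ with the dual data at each endpoint; concretely $\tau_j$ is the minimal $t>0$ with $tR\in\Lambda+\mathbb R v_j$. I will show that the map from pairs (lens data, $R$ in the open cone) to ordered period pairs $(\tau_0,\tau_1)$ is a bijection for fixed $(p,q)$, with the irrationality coming from genericity of $R$. This proves existence in both (i) and (ii): given $\tau_0,\tau_1$, choose the ellipsoid $a_1,a_2$ appropriately and take the $\mathbb Z_p$ quotient, which is K-contact and irregular since $\tau_0/\tau_1\notin\mathbb Q$.

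The third step is counting, and I expect it to be the main obstacle. A diffeomorphism need not preserve the labelling of $\gamma_0,\gamma_1$. The form with periods $(\tau_0,\tau_1)$ assigned to the cores $(\gamma_0,\gamma_1)$ and the form with the assignment swapped are strictly contactomorphic exactly when $L(p,q)$ admits a diffeomorphism exchanging the two cores of the genus-one Heegaard splitting compatibly with the torus data. By the classification of lens spaces, this happens iff $qq'\equiv1$ with $q'\equiv\pm q$, i.e. $q^2\equiv\pm1\bmod p$. Positivity of $\alpha$ fixes orientation, which excludes the $-1$ case, so the swap is realized iff $q^2\equiv1\bmod p$. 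This gives one class in case (i) and two in case (ii). The delicate part is showing that any strict contactomorphism conjugates the Reeb flows, hence their closures $T^2$, so it induces a lattice isomorphism preserving $R$. Since $R$ is generic, the only nontrivial possibility is the swap of the two isotropy directions, and I then have to check this against the orientation-preserving lens space symmetries.
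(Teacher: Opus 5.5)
Your argument is correct in outline, but it takes a genuinely different route from the paper. The paper does not invoke any classification of toric contact manifolds. It derives the global normal form $\psi_0^*\alpha=(\frac{\tau_0}{2\pi}-\phi_0a(r))\,dz+a(r)\,d\theta$ from the normal exponential map of one closed orbit. It then removes the dependence on the profile $a$ by an explicit radial reparametrization (\cref{lem:triple->cont}), so that the period and rotation number of a single closed orbit are complete invariants (\cref{lem:first_order_inv}). Finally it counts by comparing rotation numbers: in the two models the $\tau_0$-orbit has rotation number $\frac1p(\frac{\tau_0}{\tau_1}-q)$, respectively $\frac1p(\frac{\tau_0}{\tau_1}-s)$ with $sq\equiv1$, and these agree mod $1$ iff $q^2\equiv1\pmod p$.

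You replace all of this with Lerman's classification of contact toric $3$-manifolds of Reeb type. That gives a conceptual picture (the models are $\mathbb Z_p$-quotients of irrational ellipsoids, and the count becomes a lattice-symmetry question) and a short proof, but it rests on a heavy external theorem. The paper's argument is elementary and self-contained, and it produces the explicit invariants $\phi_i$ and $\mathrm{vol}(M)=p\tau_0\tau_1$ that are reused for the heat trace.

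Some points need tightening.
\begin{enumerate}
\item \cref{thm:main1} does not by itself say the closure $T$ of the Reeb flow is $2$-dimensional. For $X,Y\in\mathfrak t$ one has $d\alpha(X,Y)=0$, because $\alpha(X),\alpha(Y)$ are $T$-invariant and $[X,Y]=0$. Hence $T$-orbits are $d\alpha$-isotropic and at most $2$-dimensional, while principal orbits of an effective torus action are free, so $\dim T=2$.
\item Lerman's classification is up to equivariant contactomorphism of co-oriented contact structures. To upgrade to strict contactomorphism, add that a form defining $\xi$ with Reeb field $X_R$ is unique: if $\alpha'=f\alpha$ and $\alpha(X_R)=\alpha'(X_R)=1$, then $f=1$.
\item Step 3 is neither delicate nor in need of the lens space classification. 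A strict contactomorphism satisfies $\varphi_*R=R'$, so it conjugates the closures. It therefore induces $A\in GL(2,\mathbb Z)$ with $AR=R'$ carrying the moment cone to itself, so $A$ permutes the inward primitive normals $v_0,v_1$ without sign changes. $A=\mathrm{Id}$ is impossible since $\tau_0\neq\tau_1$. With $v_0=e_1$, $v_1=(a,p)$, the swap satisfies $Ae_2=((1-a^2)/p,-a)$, which is integral iff $a^2\equiv1\pmod p$. Since $a\equiv\pm q^{\pm1}$, this is exactly $q^2\equiv1$.
\end{enumerate}
This single computation gives both necessity and sufficiency. It also makes precise your remark that positivity excludes the $-1$ case: the competing map $v_0\mapsto v_1\mapsto -v_0$ is integral iff $a^2\equiv-1$, and it is ruled out by cone preservation. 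Likewise, since $\tau_j=1/|\det_\Lambda(v_j,R)|$ for $T=\mathfrak t/\Lambda$, your Step 2 bijection is simply a linear isomorphism from the Reeb cone onto the open positive quadrant.
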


Several examples of contact forms whose Reeb flow has exactly two periodic orbits appear in the literature; for example, irrational ellipsoids in $\mathbb C^2$ provide one such family \cite[Ex.\,1.1]{2orbits23}. 
More generally, contact forms with exactly two periodic Reeb orbits have been extensively studied: it is known that $2$ is the minimal number of periodic Reeb orbits on a three-dimensional closed contact manifold \cite{CrisHutc2016,BrokenBook23,TorsionForms2019}, and that if a closed manifold carries a contact form with exactly two periodic Reeb orbits, then it is diffeomorphic to a lens space \cite{2orbits23}.
We stress that the contact forms of \cref{thm:classification} do not provide an exhaustive list of contact forms having exactly two periodic Reeb orbits. 
Indeed, there exist contact forms on $S^3$ whose Reeb flow has precisely two periodic orbits while all others are dense (see \cite{Katok24,FayadKatok24,AlbersGeigesKai2022}, \cite[Rmk.\,1.7]{2orbits23} and \cite[Rmk.\,1.6]{CristofaroMazz2020}), a feature which is not shared by the forms of \cref{thm:classification} (cf.~\cref{sec:tubes}).

\subsection{Application: Reeb periods are sub-Riemannian spectral invariants} 
A classical theme in Riemannian geometry is the study of the relationships between the geometry and topology of a manifold and the spectral properties of its Laplace–Beltrami operator \cite{Rosenberg}. \newline\indent
In the presence of a contact structure, the picture is enriched by the interplay between the contact structure and the metric: when a contact manifold $(M,\alpha)$ is equipped with a Riemannian metric $g$ one can consider, alongside the Laplace–Beltrami operator, the \emph{sub-Laplacian}. This is a second-order hypoelliptic operator naturally associated to the sub-Riemannian structure determined by the contact distribution and the metric \cite{Agrachev}.  %(see \cite{Agrachev} for an introduction to sub-Riemannian geometry). 

To recall the definition of the sub-Laplacian, we first define the sub-Riemannian gradient of $f\in C^\infty(M)$, denoted with $\nabla^{sR} f$, as the unique smooth section of $\xi=\ker\alpha$ satisfying 
\begin{equation}
	df(Y)=g(\nabla^{sR}f,Y),\qquad \forall\, Y\in\Gamma(\xi).
\end{equation}
The sub-Laplacian of $f\in C^\infty(M)$ is then defined as the divergence of its horizontal gradient 
\begin{equation}
	\Delta^{sR} f=\mathrm{div}(\nabla ^{sR} f),
\end{equation}
where the divergence is taken with respect to the volume form $\alpha\wedge d\alpha$. As the Laplace–Beltrami operator plays a central role in Riemannian geometry, the study of the sub-Laplacian has become a central topic in the sub-Riemannian setting. See, for instance, \cite{Hormander67,Strichartz86} for matters of regularity, \cite{ClassicalQuantum23,SpectralAsymp2022,SpectralAsym1,BarBoscNeel2012,Barilari2013,BarBoscNeel2012,BenArous88,BenArous89} for heat kernel asymptotics on and outside the diagonal, and \cite{AgrachevRizziRossi24,RizziRossi21,Rossi21,TysonWang18} for heat content.

We are interested in the connections linking the spectrum of the sub-Laplacian and the dynamical properties of the Reeb vector field. One way of recovering spectral information is to study the explicit form of the coefficients appearing in the small-time asymptotic expansion of the trace of the \emph{heat kernel}, the latter being the fundamental solution $P\in C^{\infty}(\mathbb R^{+}\times M\times M)$ of the heat equation:
\begin{equation}
		\begin{cases}
			\partial_tP(t,x,y)-\Delta^{sR}_xP(t,x,y)=0,\\
			\lim_{t\to 0}\int_MP(t,x,y)f(y)\alpha\wedge d\alpha(y)=f(x),\qquad \forall f\in C_c^{\infty}(M),\,x\in M.
		\end{cases}
\end{equation}
A more detailed definition is given in \cref{sec:trace_heat}. By the small time asymptotic of the heat trace we mean the asymptotic expansion, as $t\to 0$, of the function
\begin{equation}
	t\mapsto \int_M P(t,x,x)\alpha\wedge d\alpha(x).
\end{equation}
The study of such expansion is a classical topic in Riemannian geometry \cite{Rosenberg, ShapeDrum}, and, more recently, it has been explored also in the sub-Riemannian setting \cite{ClassicalQuantum23, SpectralAsym1,Barilari2013,BenArous89,BenArous88}.

For quasi-regular contact forms the Reeb flow induces a Seifert fibration (see \cref{app:siefert}). The first two coefficients of the small time asymptotic expansion of the heat trace can be computed in terms of the invariants of such fibration.

In \cref{sec:trace_heat} we prove the following proposition.

\begin{prop}\label{prop:spectral_inv}
Let $(M,\alpha,g)$ be a compact contact Riemannian manifold where $\alpha$ is positive and quasi-regular and $g$ is Reeb invariant. Then the Reeb flow induces a Seifert fibration of $M$, in particular all but finitely many Reeb orbits have the same minimal period $\tau>0$ and 
\begin{equation}\label{eq:expansion_quasi_reg}
	\int_M  P(t,x,x)\alpha\wedge d\alpha(x)=\frac{\tau}{16t^2}\left(-e(M)+2\pi\chi_{\mathrm{orb}}(\mathcal O)t+O(t^2)\right),
\end{equation}
where $e(M)$ the Euler number of the Seifert fibration and $\chi_{\mathrm{orb}}(\mathcal O)$ is the orbifold Euler characteristic of the base orbifold.
\end{prop}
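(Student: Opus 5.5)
We plan to reduce the heat trace to the small-time expansion of the heat kernel on the diagonal, which is known for three-dimensional contact sub-Riemannian structures, and then integrate the resulting local invariants using the Seifert fibration.

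\emph{Step 1 (Seifert structure).} Since $\alpha$ is quasi-regular, the Reeb flow is periodic. Because $g$ is Reeb invariant, the flow $\phi_t$ of $R$ is a one-parameter group of isometries. Its closure in the compact isometry group is a torus, and all orbits are closed, so the flow factors through a locally free $S^1$-action. By the classical results of Wadsley and Epstein quoted in the introduction, this gives a Seifert fibration $M\to\mathcal O$ over a closed oriented $2$-orbifold. Only finitely many fibres are exceptional, and all other orbits share the generic minimal period $\tau$. Moreover $d\alpha$ is basic, so it descends to an orbifold area form $\omega$ on $\mathcal O$.

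\emph{Step 2 (local expansion).} For a contact sub-Riemannian structure in dimension three, the diagonal heat kernel satisfies $P(t,x,x)=\frac{1}{16t^2}\bigl(1+\kappa(x)t+O(t^2)\bigr)$ (Barilari; the paper's Section on the heat trace). The leading constant depends only on the normalization of the volume $\alpha\wedge d\alpha$, and $\kappa$ is a constant multiple of the first sub-Riemannian curvature invariant. When $R$ is Killing the torsion $\chi$ vanishes, and $\kappa$ is a multiple of the Gaussian curvature $K$ of the quotient metric on the base. I expect this to be the main obstacle: fixing the constants and identifying $\kappa$ exactly with $K$ up to normalization requires care with the conventions for $\Delta^{sR}$ and the volume form. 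I would first check everything on the Heisenberg group and the Hopf fibration $S^3$, where both sides are explicit.

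\emph{Step 3 (integration).} The integral of the leading term is $\frac{1}{16t^2}\mathrm{vol}(M)$. Integrating $\alpha\wedge d\alpha$ fibrewise over generic fibres, which have length $\tau$, gives $\mathrm{vol}(M)=\tau\int_{\mathcal O}\omega$. Since $\alpha$ is a connection form with curvature $d\alpha$, orbifold Chern–Weil gives $\int_{\mathcal O}\omega=-\tau\, e(M)$ up to the sign convention, after normalizing the period. Together these produce the term $-\tau e(M)$.

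For the next term we integrate $\kappa$ fibrewise to get $\tau\int_{\mathcal O}K\,dA$. By the orbifold Gauss–Bonnet theorem this equals $2\pi\tau\chi_{\mathrm{orb}}(\mathcal O)$. The exceptional fibres have measure zero, so they do not affect either integral.

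Finally we sum. The uniformity of the remainder $O(t^2)$ follows from the smoothness and uniformity of the local expansion on compact $M$, and this yields \eqref{eq:expansion_quasi_reg}.
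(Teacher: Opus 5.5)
Your route is the paper's: Wadsley's theorem gives the Seifert structure, then the Barilari/Ben Arous expansion of \cref{thm:Davide}, fibrewise integration over $M^\times\simeq\mathcal O^\times\times S^1$ (exceptional fibres being negligible), and orbifold Gauss--Bonnet. Computing the volume by orbifold Chern--Weil, instead of quoting \cite{Geig2022,AbbLangeMazz2022}, is a fine self-contained substitute.

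The genuine gap is the last sentence of your volume computation. Your identities $\mathrm{vol}(M)=\tau\int_{\mathcal O}\omega$ and $\int_{\mathcal O}\omega=-\tau e(M)$ are both correct. Together, however, they give $\mathrm{vol}(M)=-\tau^2e(M)$, not $-\tau e(M)$. No ``normalization of the period'' can remove the extra factor: $\tau$ is determined by $\alpha$, and under $\alpha\mapsto c\alpha$ the volume scales by $c^2$, $\tau$ by $c$, while $e(M)$ is topological.

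The round $S^3$ settles it. For $\alpha=\frac12\sum_j(x_jdy_j-y_jdx_j)$ one has $\tau=\pi$, $\mathrm{vol}=\pi^2$, and Hopf Euler number $-1$. The $\tau^2$ version is also the only one compatible with $\mathrm{vol}(M)=p\tau_0\tau_1$ from \cref{lem:Phi_and_phis}. For $L(p,q)$ fibred over $S(a_0,a_1)$ one has $-e(M)=p/(a_0a_1)$ and $\tau_i=\tau/a_i$, so $-\tau^2e(M)=p\tau_0\tau_1$.

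Hence the bracket in \eqref{eq:expansion_quasi_reg} should contain $-\tau e(M)$, not $-e(M)$. The identity $\int_M\alpha\wedge d\alpha=-\tau e(M)$ used in the paper's proof loses the same factor of $\tau$. You should keep your $\tau^2$ and state the corrected leading term, rather than hedging to force agreement with the display. The curvature term $2\pi\tau\chi_{\mathrm{orb}}(\mathcal O)$ is unaffected.

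A secondary point concerns Step 2. To obtain exactly $2\pi\tau\chi_{\mathrm{orb}}(\mathcal O)$ you need $\kappa=K\circ\pi$ with constant $1$ (in the normalization $d\alpha|_\xi=\mathrm{area}_\eta$), not merely proportionality. The paper gets this by citation: $\kappa$ is the Tanno sectional curvature of $\xi$ by \cite[Lemma 13]{BarBesLer2020}. For a Reeb-invariant compatible metric this coincides with the Gaussian curvature of $\mathcal O^\times$, whose area form is $d\beta$.

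Calibrating on examples only works after you have shown $\kappa=cK$ with a universal constant $c$. The Heisenberg group gives no information for that, since $K=0$ there. The Hopf fibration does the job: the base has area $\pi$ and curvature $K=4$, so $\tau\int K\,dA=4\pi^2=2\pi\tau\chi(S^2)$.
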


In the irregular K-contact case, the Reeb flow does not induce a Seifert fibration of $M$. Nonetheless, the first two coefficients of the heat trace expansion can be computed explicitly. By approximating irregular contact forms with quasi-regular ones and showing that the first two coefficients in the expansion \eqref{eq:expansion_quasi_reg} converge, we obtain the following result.
\begin{theorem}\label{thm:spectral_inv}
	Let $(M,\alpha,g)$ be a compact contact Riemannian manifold where $\alpha$ is a positive irregular K-contact form and $g$ is a Reeb invariant metric. According to \cref{thm:main1} there exist $p,q\in\mathbb Z$ co-prime integers such that $M$ is diffeomorphic to $L(p,q)$ and contains exactly two periodic Reeb orbits. Let $\tau_0,\tau_1$ be their minimal periods, then 
	\begin{equation}
		\int_M  P(t,x,x)\alpha\wedge d\alpha(x)=\frac{1}{16t^2}(p\tau_0\tau_1+2\pi(\tau_0+\tau_1)t+O(t^2)).
	\end{equation}
\end{theorem}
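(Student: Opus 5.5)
The plan follows the hint in the introduction: approximate the irregular K-contact form by quasi-regular ones, apply \cref{prop:spectral_inv} to each approximant, and show that both the heat-trace coefficients and the Seifert invariants converge.

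\emph{Step 1: a torus of Reeb-type fields.} Since $g$ is Reeb invariant, the closure of the Reeb flow in the isometry group of $(M,g)$ is a compact abelian group. By \cref{thm:main1} the flow is irregular, so this closure is a torus $T$. Its identity component is a $2$-torus preserving $\alpha$, because the flow preserves $\alpha$. Let $\mathfrak t$ be its Lie algebra and put $\alpha$-normalised elements $X\in\mathfrak t$ with $\alpha(X)>0$ near $R$. For each such $X$, the form $\alpha_X=\alpha/\alpha(X)$ is a contact form with Reeb field $X$ (the standard Reeb-cone deformation), and $g$ can be adjusted $T$-invariantly so that it is $X$-invariant.

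Choose rational directions $X_n\to R$. The flows of $X_n$ generate circle actions, so the forms $\alpha_n$ are quasi-regular, K-contact, and converge smoothly to $\alpha$. The associated sub-Riemannian structures $(\ker\alpha_n, g_n)$ converge smoothly as well.

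\emph{Step 2: stability of the coefficients.} The first two heat-trace coefficients are integrals of local invariants: the leading term is a universal constant times the contact volume, and the next term integrates a curvature-type invariant such as the Tanaka--Webster curvature. These depend continuously on the structure in $C^\infty$. Hence the coefficients of $\alpha_n$ converge to those of $\alpha$, and I will rely on the known local expansion of the heat kernel on the diagonal for this.

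\emph{Step 3: computing the limit via Seifert data.} For $\alpha_n$, the two periodic orbits $\gamma_0,\gamma_1$ of $R$ remain $T$-fixed circles, so they are exceptional (or regular) fibres. Write $X_n = a_n\partial_{\theta_0}+b_n\partial_{\theta_1}$ in a basis of the lattice of $\mathfrak t$ adapted to $L(p,q)$, as in the local model near the two orbits from the proof of \cref{thm:main1}.

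The constant term. The generic period $\tau^{(n)}$ and the Euler number $e^{(n)}$ are computable from $(a_n,b_n)$ and $p,q$. Independently, the leading coefficient equals a constant times the volume, and the volume of $\alpha_n$ is continuous. So I only need to evaluate $-\tau^{(n)} e^{(n)}$ in the limit. I would do this through the toric or Duistermaat--Heckman computation of the contact volume, which for $L(p,q)$ with Reeb field having orbit periods $\tau_0,\tau_1$ gives $\mathrm{vol}=p\tau_0\tau_1$ up to the normalisation fixed in \cref{prop:spectral_inv}. This identity extends by continuity, since the periods of $\gamma_i$ under $X_n$ converge to $\tau_i$.

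The $t$-coefficient. Here $\tau^{(n)}\chi_{\mathrm{orb}}$ is needed. With base orbifold $S^2$ having two cone points of orders $m_0^{(n)},m_1^{(n)}$,
\begin{equation}
\tau^{(n)}\chi_{\mathrm{orb}}=\tau^{(n)}\Bigl(\tfrac1{m_0^{(n)}}+\tfrac1{m_1^{(n)}}\Bigr)=\tau^{(n)}_{\gamma_0}+\tau^{(n)}_{\gamma_1}\to\tau_0+\tau_1,
\end{equation}
since the exceptional fibre over a cone point of order $m$ has period $\tau/m$. Multiplying by $2\pi$ gives the stated coefficient.

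\emph{Main obstacle.} The step I expect to be hardest is justifying that the approximants $\alpha_n$ genuinely have exceptional fibres exactly at $\gamma_0,\gamma_1$, with periods converging to $\tau_0,\tau_1$. This requires the local normal form from the tubular-neighbourhood analysis used for \cref{thm:main1}. A second point to check is the uniform convergence of the coefficients in Step 2, which I would handle by citing locality of the small-time heat expansion. For the Euler-number term, I would cross-check $-e\tau=p\tau_0\tau_1$ directly in the quasi-regular case, using $e=-p/(m_0m_1)$ for $L(p,q)$ fibred over $S^2(m_0,m_1)$ together with $\tau_i=\tau/m_i$.
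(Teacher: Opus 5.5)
Your argument for the $t$-coefficient is essentially the paper's. The approximants $\alpha/(1+\varepsilon\mu)$ of \cref{lem:deformation} are exactly your $\alpha/\alpha(X)$ with $X=R+\varepsilon Y$, where $Y$ is the K-contact field of the volume function, normalised so that $\mu=\alpha(Y)$ (cf.\ \cref{rmk:deformation}); this $Y$ lies in your $\mathfrak t$. The paper then combines \cref{prop:spectral_inv}, $\chi_{\mathrm{orb}}=1/a_0+1/a_1$ and $\tau/a_i=\tau_n(\ell_i)\to\tau_i$, just as you do.

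Two of your variations are genuine simplifications:
\begin{itemize}
\item Your remark that $\chi_{\mathrm{orb}}=1/m_0+1/m_1$ still holds when some $m_i=1$ removes the paper's need to force both orbits to be exceptional.
\item Your ``main obstacle'' is easy. $T$ is abelian and transitive on each of its $2$-dimensional orbits, so an element fixing one point of such an orbit fixes it pointwise, and is then the identity by the injectivity argument in \cref{lem:G_is_T}. Hence every $X_n$-orbit off $\gamma_0\cup\gamma_1$ has the generic period. Along $\gamma_i$ one has $X_n=\alpha(X_n)R$, so $\tau^{(n)}_{\gamma_i}=\tau_i/\alpha(X_n)|_{\gamma_i}\to\tau_i$.
\end{itemize}
No uniformity in $n$ of the heat-trace remainder is needed: only the explicit coefficient $\int_M\kappa\,\alpha\wedge d\alpha$ must converge, and \cref{thm:Davide} is applied to $(\alpha,g)$ itself.

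The step that fails as written is the constant term. The paper does not approximate there. It proves $\int_M\alpha\wedge d\alpha=p\tau_0\tau_1$ directly for the irregular form in \cref{lem:Phi_and_phis}, by comparing the normal forms of $\alpha$ around $\ell_0$ and $\ell_1$ through the gluing map $\Phi$ and evaluating $\mu_i=2\pi\tau_i a_i$ at the two orbits. You can simply cite that lemma.

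Your alternative rests on the identity $-e\tau=p\tau_0\tau_1$, which is false. Under $\alpha\mapsto c\alpha$ the volume scales by $c^2$, the periods by $c$, and $e$ is unchanged. So the Seifert volume formula of \cite{Geig2022,AbbLangeMazz2022} must read $\int_M\alpha\wedge d\alpha=-e\,\tau^2$; the leading term as displayed in \cref{prop:spectral_inv} is missing a factor $\tau$.

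Concretely, your data give $-e^{(n)}\tau^{(n)}=p\tau^{(n)}/(m^{(n)}_0m^{(n)}_1)=p\,\tau^{(n)}_{\gamma_0}/m^{(n)}_1$. This tends to $0$, because $m^{(n)}_0/m^{(n)}_1=\tau^{(n)}_{\gamma_1}/\tau^{(n)}_{\gamma_0}\to\tau_1/\tau_0\notin\mathbb Q$ forces $m^{(n)}_i\to\infty$. So ``evaluate $-\tau^{(n)}e^{(n)}$ in the limit'' would give the wrong leading coefficient. With the correct formula and $|e^{(n)}|=p/(m^{(n)}_0m^{(n)}_1)$, you get $-e^{(n)}(\tau^{(n)})^2=p\,\tau^{(n)}_{\gamma_0}\tau^{(n)}_{\gamma_1}\to p\tau_0\tau_1$, and your continuity argument closes. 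As it stands, the appeal to a Duistermaat--Heckman computation just asserts the formula you need rather than deriving it.
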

It has been conjectured in \cite{ClassicalQuantum23} that the periods of closed orbits of the Reeb vector field are spectral invariants of the sub-Laplacian. \cref{thm:spectral_inv} explicitly confirms such conjecture, for irregular K-contact forms. \cref{thm:spectral_inv} and \cref{prop:spectral_inv} show that contact invariants can be recovered from the spectrum of the sub-Riemannian Laplacian. Thus, they may be contextualized in a growing line of research where contact topology is investigated through the use of sub-Riemannian methods, see for instance \cite{ABBR2024,BellBosc2023,Bell2023,BarBoscCann2022,BarBesLer2020,BarBelPina2025}.
\subsection{Structure of the paper} The paper is organized as follows. In \cref{sec:preliminaries} we review some preliminaries on contact Riemannian manifolds and study their symmetry groups in the K-contact case. In \cref{sec:lens} we recall the definition of lens spaces and we describe them as the union of two charts. In \cref{sec:tubes} we study the normal exponential map from a periodic Reeb orbit in K-contact manifolds. In \cref{sec:char_K_cont} we apply the results of \cref{sec:tubes} and prove \cref{thm:main1}. In \cref{sec:K-cont-Lens} we prove \cref{thm:classification}. Finally, \cref{sec:trace_heat} is dedicated to the proof of \cref{prop:spectral_inv} and \cref{thm:spectral_inv}.
\section{K-contact Riemannian manifolds}\label{sec:preliminaries}
In this section we review some preliminaries on K-contact forms and we study their symmetry groups. In particular we establish relations between periodic Reeb orbits and the critical set of contact momentum maps. Throughout the section $M$ denotes a closed oriented three dimensional smooth manifold.
\subsection{Contact forms and compatible metrics}
A smooth plane field $\xi$ on $M$ is called a contact structure if there exists a differential form $\alpha$, called a contact form, such that 
\begin{equation}
    \xi=\ker\alpha,\qquad \alpha\wedge d\alpha \neq 0.
\end{equation}
Given a never vanishing smooth function $f\in C^\infty(M)$, the rescaled form $f\alpha$ is also a contact form. However, the sign of $\alpha\wedge d\alpha$ does not depend on the scaling factor. A contact form is called positive if $\alpha\wedge d\alpha>0$. To any contact form there is an associated vector field $R$ called the Reeb field, uniquely determined by the following two equations
\begin{equation}\label{eq:def_reeb}
    \alpha(R)=1,\qquad d\alpha(R,\cdot\,)=0.
\end{equation}
\begin{defi}\label{def:compatible}
  A Riemannian metric $g$ is called compatible with a positive contact form $\alpha$ if
  \begin{equation}\label{eq:comp_metric}
      g(R,\cdot\,)=\alpha(\, \cdot\,),\qquad \mathrm{vol}_g=\alpha\wedge d\alpha,
  \end{equation}
  where $\mathrm{vol}_g$ is the volume form of the oriented Riemannian manifold $(M,g)$. The triple $(M,\alpha,g)$ is called a contact Riemannian manifold.
\end{defi}
For a fixed contact form $\alpha$, there exist several metrics satisfying \eqref{eq:comp_metric}. 
\begin{prop}\label{prop:comp_metrics}
    Let $\eta$ be a bundle metric on $\xi$ satisfying
    \begin{equation}\label{eq:sr_comp_metric}
        d\alpha|_\xi=\mathrm{area}_\eta,
    \end{equation}
    where $\mathrm{area}_\eta$ is the area form of the oriented Euclidean bundle $(\xi,\eta)$, the orientation being the one determined by $d\alpha|_\xi$. Then the Riemannian metric $g$ defined by 
    \begin{equation}
    	g=p^*\eta+\alpha\otimes\alpha,
    \end{equation}
	where $p:TM\to \xi$ denotes the linear projection $p(v)=v-\alpha(v)R$, is compatible with $\alpha$.
\end{prop}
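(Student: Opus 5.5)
The plan is to check the two conditions of \cref{def:compatible} directly for $g=p^*\eta+\alpha\otimes\alpha$. The decomposition $TM=\xi\oplus\mathbb{R}R$, with projection $p$ onto $\xi$ and complementary projection $v\mapsto\alpha(v)R$, does most of the work. A preliminary point is that $g$ is a Riemannian metric. It is smooth and symmetric because $p$, $\eta$ and $\alpha$ are. For positivity, note that $g(v,v)=\eta(p(v),p(v))+\alpha(v)^2$. If this vanishes, then $\alpha(v)=0$ and $p(v)=0$, so $v=p(v)+\alpha(v)R=0$.

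For the first condition, evaluate $g(R,\cdot\,)$. Since $\alpha(R)=1$, we have $p(R)=R-R=0$. Hence $g(R,w)=\eta(0,p(w))+\alpha(R)\alpha(w)=\alpha(w)$ for every $w$, that is, $g(R,\cdot\,)=\alpha$. This also shows that $R$ is a unit vector $g$-orthogonal to $\xi$, and that $g|_\xi=\eta$, because $p$ restricts to the identity on $\xi$.

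For the volume condition, I will compare both sides on a single basis at each point. Pick a $d\alpha|_\xi$-positive $\eta$-orthonormal basis $(e_1,e_2)$ of $\xi_x$. By the previous step, $(R,e_1,e_2)$ is $g$-orthonormal. On the other side,
\begin{equation}
	\alpha\wedge d\alpha(R,e_1,e_2)=\alpha(R)\,d\alpha(e_1,e_2)=\mathrm{area}_\eta(e_1,e_2)=1 .
\end{equation}
In this expansion every other term contains either $\alpha(e_i)=0$ or $d\alpha(R,\cdot\,)=0$, which is why only one term survives. The last two equalities use the hypothesis $d\alpha|_\xi=\mathrm{area}_\eta$ together with the choice of basis.

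Since $\alpha$ is positive, $M$ is oriented by $\alpha\wedge d\alpha$. The computation above shows that $(R,e_1,e_2)$ is a positively oriented $g$-orthonormal frame, so $\mathrm{vol}_g(R,e_1,e_2)=1$. Two top-degree forms that agree on a basis are equal, so $\mathrm{vol}_g=\alpha\wedge d\alpha$. The only delicate point is keeping the orientation conventions consistent: $\xi$ is oriented by $d\alpha|_\xi$ and $M$ by $\alpha\wedge d\alpha$, and the frame $(R,e_1,e_2)$ is positive for both conventions simultaneously. Beyond that the verification is routine.
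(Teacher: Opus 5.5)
Your proof is correct and follows the paper's argument. The first condition is checked exactly as in the paper, via $p(R)=0$ and $\alpha(R)=1$, and evaluating both top forms on the $g$-orthonormal frame $(R,e_1,e_2)$ is the pointwise version of the paper's identity $\mathrm{vol}_g=\alpha\wedge p^*(\mathrm{area}_\eta)=\alpha\wedge p^*(d\alpha|_\xi)=\alpha\wedge d\alpha$ (your positive-definiteness check is a harmless addition the paper leaves implicit).
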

\begin{proof}
    We check the two conditions of \eqref{eq:comp_metric}. The first one follows from $\alpha(R)=1$ and $p(R)=0$:
    \begin{equation}
        g(R,\cdot)=\eta(p(R),p(\cdot))+\alpha(R)\alpha(\cdot)=\alpha(\cdot).
    \end{equation}
    The latter equality implies that $R$ is a unit normal to $\xi$, and that $p:TM\to \xi$ is the orthogonal projection to $\xi$ determined by the metric $g$. Therefore
    \begin{equation}
    	\mathrm{vol}(g)=\alpha\wedge p^*(\mathrm{area}_{g|_{\xi}}).
    \end{equation}
	Moreover, $g|_{\xi}=\eta$ and $\mathrm{area}_\eta=d\alpha|_{\xi}$, therefore 
	\begin{equation}
		\mathrm{vol}(g)=\alpha\wedge p^*(\mathrm{area}_{g|_{\xi}})=\alpha\wedge p^*(\mathrm{area}_\eta)=\alpha\wedge p^*(d\alpha|_{\xi})=\alpha\wedge d\alpha,
	\end{equation}
	where we have used the fact that $p^*(d\alpha|_{\xi})=d\alpha$ since, by definition of Reeb field, $d\alpha(R,\cdot)=0$.
\end{proof}
\begin{remark}
    Several notions of compatibility between Riemannian metrics and contact structures appear in the literature. The one that we give here corresponds to the \enquote{strong compatibility} of \cite{Massot}, see also \cite{Blair}. Equation  \eqref{eq:sr_comp_metric} expresses the sub-Riemannian compatibility condition, see \cite{Agrachev,ABBR2024}.
\end{remark}
We now introduce $K$-contact forms.
\begin{defi}
    A positive contact form $\alpha$ on $M$ is called $K$-contact if the associated Reeb vector field is a Killing vector field for a  smooth Riemannian metric $g$. In other terms
    \begin{equation}
    	\mathcal L_Rg=0.
    \end{equation}
	The couple $(M,\alpha)$ is called a K-contact manifold.
\end{defi}
The next proposition shows that the Riemannian metric $g$ in the previous definition can be chosen to be compatible with $\alpha$.
\begin{prop}\label{prop:comp_k_cont}
	Let $\alpha$ be a positive $K$-contact form with Reeb vector field $R$. There exists a Riemannian metric $g$, compatible with $\alpha$ in the sense of \cref{def:compatible}, such that $\mathcal L_Rg=0$.
\end{prop}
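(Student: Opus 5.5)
The plan is to start from an arbitrary Riemannian metric $h$ with $\mathcal L_Rh=0$ and build from it a compatible metric via \cref{prop:comp_metrics}. The job is to produce a bundle metric $\eta$ on $\xi$ that satisfies \eqref{eq:sr_comp_metric} and is invariant under the Reeb flow $\phi_t$. Once we have it, $g=p^*\eta+\alpha\otimes\alpha$ is compatible with $\alpha$ by \cref{prop:comp_metrics}. Moreover, $\alpha$ is $R$-invariant, since $\mathcal L_R\alpha=d(\alpha(R))+\iota_Rd\alpha=0$. Hence $\xi$, $R$ and the projection $p$ are preserved by $\phi_t$, and $\mathcal L_R g=0$ as soon as $\phi_t^*\eta=\eta$.

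To build $\eta$, first restrict $h$ to $\xi$, obtaining a bundle metric $h|_\xi$. It is $\phi_t$-invariant because $\phi_t$ is an $h$-isometry preserving $\xi$. Its area form differs from $d\alpha|_\xi$ by a positive function: $d\alpha|_\xi=f\,\mathrm{area}_{h|_\xi}$ with $f>0$, using the orientation of $\xi$ given by $d\alpha|_\xi$. We set $\eta=f\,h|_\xi$. In rank two, scaling a metric by $f$ scales its area form by $f$, so $\mathrm{area}_\eta=f\,\mathrm{area}_{h|_\xi}=d\alpha|_\xi$.

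It remains to check invariance. Both $d\alpha|_\xi$ and $\mathrm{area}_{h|_\xi}$ are $\phi_t$-invariant: the first because $\mathcal L_Rd\alpha=d\mathcal L_R\alpha=0$, the second because it is canonically determined by the invariant metric $h|_\xi$ and the invariant orientation. Therefore $f\circ\phi_t=f$, and so $\phi_t^*\eta=\eta$.

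There is no serious obstacle in this argument. The only point needing care is the naturality of the area form under bundle isometries preserving orientation. Orientation is preserved because $\phi_t$ preserves $d\alpha|_\xi$, which defines the orientation.
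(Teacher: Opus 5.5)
Your proposal is correct and follows essentially the same route as the paper: you rescale the restriction of a Reeb-invariant metric to $\xi$ by the positive, Reeb-invariant function $f$ with $d\alpha|_\xi=f\,\mathrm{area}_{h|_\xi}$, and then apply \cref{prop:comp_metrics} to $g=p^*\eta+\alpha\otimes\alpha$. You also spell out some details the paper leaves implicit: the Cartan-formula check $\mathcal L_R\alpha=0$, the invariance of $f$, and the preservation of $p$ and of the orientation of $\xi$.
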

\begin{proof}
Let $\bar g$ be a Riemannian metric such that $\mathcal L_R\bar g=0$. We endow $\xi$ with the orientation induced by $d\alpha |_{\xi}$. Let $\sigma$ be the positive area form that $\bar{g}|_{\xi}$ induces on the oriented bundle $\xi$. Observe that $\mathcal L_R\sigma=0$. There exists a smooth positive function $f:M\to \mathbb R$ such that 
\begin{equation}
	d\alpha|_{\xi}=f\sigma.
\end{equation}
Therefore the bundle metric $\eta:=f\bar g|_{\xi}$ satisfies $\mathrm{area}_\eta=d\alpha|_{\xi}$ and $\mathcal L_R\eta=0$.
Let $p:TM\to \xi$ be the linear projection $p(v)=v-\alpha(v)R$. We define the Riemannian metric $g$ as
\begin{equation}
	g=p^*\eta+\alpha\otimes\alpha.
\end{equation}
Since $\mathcal L_R\alpha=\mathcal L_R\eta=0$, one has $\mathcal L_Rg=0$. The compatibility with $\alpha$ follows from \cref{prop:comp_metrics}.
\end{proof}
\begin{defi}
	The triple $(M,\alpha,g)$ is called a K-contact Riemannian manifold if $M$ is a three manifold, $\alpha$ is a positive K-contact form, and $g$ is a compatible metric satisfying $\mathcal L_Rg=0$.
\end{defi}
\subsection{Rotation number}
In this section we introduce an invariant for periodic orbits of the Reeb vector field associated with a K-contact form. 

Let $(M,\alpha)$ be a K-contact manifold and let $\ell\subset M$ be a periodic orbit of the Reeb vector field of minimal period $\tau>0$. That is to say that, for any $q\in \ell$, $\tau$ is the smallest positive real number such that 
\begin{equation}
	e^{\tau R}(q)=q.
\end{equation}
Let $g$ be a Riemannian metric invariant under the flow of the Reeb vector field. Let $\xi=\ker\alpha$ and note that for any fixed $q\in \ell$, the linear map
\begin{equation}\label{eq:first_return}
	e^{\tau R}_*:\xi_q\to \xi_q,
\end{equation}
is an isometry of 2-dimensional euclidean vector spaces, the metric being $g|_\xi$. Hence, its eigenvalues are of the form $e^{\pm i2\pi\phi}$, with $\phi\in\mathbb R$.
\begin{defi}\label{def:rotation_num}
	The rotation number $\phi\in\mathbb R/\mathbb N$ associated with the periodic Reeb orbit $\ell$ is defined by the following equation 
	\begin{equation}
		\lambda_{\pm}(\ell)=e^{\pm i2\pi\phi},
	\end{equation}
	where $\lambda_{\pm}(\ell)$ are the eigenvalues of \eqref{eq:first_return}.
\end{defi}
\begin{remark}\label{rmk:explicit_rotation}
	Let $\phi$ be the rotation number of $\ell$ and let $v_1,v_2$ be an orthonormal frame for $g|_{\xi_q}$. Since the Reeb flow preserves the Reeb vector field, the linear map $e^{\tau R}_*:T_qM\to T_qM$, written with respect to the ordered basis $v_1,v_2,R$, has the following associated matrix
	\begin{equation}
		e^{\tau R}_*=\begin{pmatrix}
			\cos(2\pi\phi) & -\sin(2\pi\phi) & 0\\
			\sin(2\pi\phi) & \cos(2\pi\phi) & 0\\
			0 & 0 & 1
		\end{pmatrix}.
	\end{equation}
\end{remark}
Although being a semi-local invariant, the rotation number of a single Reeb orbit determines the behavior of all the other ones.
\begin{lemma}\label{lem:rot_determines_flow}
	Let $(M,\alpha)$ be a K-contact manifold and let $\ell\subset M$ be a periodic orbit of the Reeb vector field having rotation number $\phi$. If $\phi\in\mathbb Q$, then all Reeb orbits are periodic. Moreover, if $\ell$ has minimal period $\tau$ and $\phi=n/m$ with $m,n\in\mathbb N$ coprime, then there exists a neighborhood $O$ of $\ell$ such that for any $x\in O$ the Reeb orbit containing $x$ has minimal period $m\tau$. 
\end{lemma}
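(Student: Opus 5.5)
Since $e^{m\tau R}$ is an isometry of $(M,g)$ (it is the time-$m\tau$ flow of a Killing field), the plan is to show it is the identity. Write $\phi=n/m$. By \cref{rmk:explicit_rotation}, in the basis $v_1,v_2,R$ the differential $e^{\tau R}_*$ at $q\in\ell$ is a rotation by $2\pi n/m$ in $\xi_q$ and fixes $R$. Hence $(e^{m\tau R})_*=(e^{\tau R}_*)^m=\mathrm{Id}$ on $T_qM$. Also $e^{m\tau R}(q)=q$.

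The first step is local. A Riemannian isometry of a connected manifold is determined by its value and differential at one point, because it commutes with the exponential map. Here $M$ is compact and can be taken connected; otherwise we work on the component containing $\ell$. So $e^{m\tau R}=\mathrm{Id}_M$. In particular every Reeb orbit is periodic, with period dividing $m\tau$, which gives the first claim.

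The second step is to show that nearby orbits have minimal period exactly $m\tau$, not a proper divisor of it. Suppose $x$ is close to $q$ and $e^{sR}(x)=x$ with $0<s<m\tau$. Then $F=e^{sR}$ is an isometry fixing $x$. Since the period group of $x$ is a discrete subgroup of $\mathbb R$ containing $m\tau$, we have $s=m\tau/k$ for some integer $k\ge2$. So it suffices to rule out $s\in\{m\tau/k: k\ge 2\}$.

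For this, use the tubular structure near $\ell$. Take the normal exponential map from $\ell$ restricted to $\xi$, which is orthogonal to $R$ by compatibility of $g$. On a small tube $O$ of radius $\varepsilon$ below the injectivity radius of the normal exponential, every $x\in O$ can be written as $x=\exp_{q'}(v)$ with $q'\in\ell$ and $v\in\xi_{q'}$, $|v|<\varepsilon$. Since the flow consists of isometries preserving $\ell$ and $\xi$,
\[
e^{sR}(x)=\exp_{e^{sR}q'}\bigl(e^{sR}_*v\bigr).
\]
By uniqueness of the normal coordinates, $e^{sR}(x)=x$ forces two things: $e^{sR}q'=q'$, so $s\in\tau\mathbb Z$, say $s=j\tau$; and $(e^{\tau R}_*)^j v=v$. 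If $v\neq0$, then rotation by $2\pi jn/m$ fixes $v$, so $m\mid jn$, and since $\gcd(m,n)=1$ we get $m\mid j$. Thus $s\ge m\tau$, so for $x\in O\setminus\ell$ the minimal period is exactly $m\tau$.

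The main obstacle is getting this uniqueness right. One needs $O$ small enough that the normal exponential from $\ell$ is a diffeomorphism onto $O$; this holds by compactness of $\ell$. One also needs the normal exponential to be equivariant under the flow, which holds because the flow acts by isometries preserving $\ell$. A last caveat concerns the statement itself: it should be read as applying to $x\in O\setminus\ell$ (or to $m=1$), since $\ell$ itself has minimal period $\tau$.
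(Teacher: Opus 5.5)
Your proof is correct and follows essentially the paper's argument. The paper shows $e^{m\tau R}(x)=x$ by pushing a minimizing geodesic from $\ell$ to $x$ along the Reeb flow, which is a hands-on version of your rigidity-of-isometries step, and it gets the exact period $m\tau$ from uniqueness of the minimizing geodesic near $\ell$, which is your injectivity of the normal exponential on a thin tube. Your caveat is also right: the minimal period is $m\tau$ only on $O\setminus\ell$ (unless $m=1$), since $\ell$ itself has minimal period $\tau$.
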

\begin{proof}
	Let $g$ be a Riemannian metric invariant under the flow of the Reeb vector field $R$. Let $x\in M$ and let $\sigma:[0,1]\to M$ be a length minimizing geodesic joining $x$ to $\ell$, i.e.
	\begin{equation}
		\sigma(0)\in\ell,\qquad\sigma(1)=x,\qquad \mathrm{length}(\sigma)=\inf_{y\in\ell}d(x,y), 
	\end{equation}
	where $d(\cdot,\cdot)$ is the Riemannian distance. Now consider the 1-parameter family of curves $$\sigma_t=e^{tR}\circ\sigma,\qquad t\in\mathbb R.$$
	Since the Reeb flow acts by isometries, each $\sigma_t$ is a length minimizing geodesic. By hypothesis there exists $m,n\in\mathbb N$ coprime such that $\phi=n/m$, therefore by definition of rotation number one has $$e^{m\tau R}_*=\left(e^{\tau R}_*\right)^m=Id,$$ where $\tau$ is the minimal period of $\ell$ (cf. \cref{rmk:explicit_rotation}). It follows that $\dot\sigma(0)=\dot\sigma_{m\tau}(0)$, which implies $\sigma\equiv\sigma_{m\tau}$, since both curves are geodesics. Consequently
	\begin{equation}
		e^{m\tau R}(x)=e^{m\tau R}\circ\sigma(1)=\sigma_{m\tau}(1)=\sigma(1)=x.
	\end{equation}
	Therefore the Reeb orbit through $x$ is periodic of minimal period not greater than $m\tau$.  If in the above reasoning we choose $x$ sufficiently close to $\ell$, then there is a unique length minimizing geodesic joining $x$ to $\ell$ (cf. \cref{ssec:tube_Riemann}), which is $\sigma$. Therefore, for such an $x$ we have
	\begin{equation}\label{eq:compute_period}
		e^{t R}(x)=x\iff e^{tR}\circ\sigma(1)=\sigma(1)\iff e^{tR}\circ\sigma\equiv \sigma\iff e^{tR}_*\dot\sigma(0)=\dot\sigma(0),
	\end{equation}
	where uniqueness of the minimizing geodesic has been used in the second equivalence. Since $\ell$ has minimal period $\tau$ and rotation number $\phi=n/m$, the minimal $t$ for which \eqref{eq:compute_period} holds is $m\tau$.
\end{proof}
\begin{lemma}\label{lem:R(f)=0}
	Let $(M,\alpha)$ be a K-contact manifold and let $f\in C^{\infty}(M)$ be a Reeb invariant function, i.e. $df(R)=0$. Assume that there exists a periodic Reeb orbit $\ell$ contained in a regular level set of $f$. Then $\ell$ has zero rotation number. In particular all Reeb orbits are periodic.
\end{lemma}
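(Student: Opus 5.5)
The idea is to show that the linearized return map $e^{\tau R}_*$ on $\xi_q$, $q\in\ell$, fixes a nonzero vector. An isometry of the Euclidean plane $(\xi_q,g|_{\xi})$ that fixes a nonzero vector is either the identity or a reflection. Since the Reeb flow preserves $d\alpha|_\xi$, the map is orientation preserving, so it must be the identity. By \cref{rmk:explicit_rotation} this forces $\phi=0$ in $\mathbb R/\mathbb N$, i.e. $\phi\in\mathbb Z$. The final claim then follows from \cref{lem:rot_determines_flow} applied with $\phi=0/1\in\mathbb Q$.

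To produce the fixed vector, fix $q\in\ell$. Since $\ell$ lies in a regular level set of $f$, we have $df_q\neq 0$. As $df(R)=0$, the restriction $df_q|_{\xi_q}$ is a nonzero linear form. Let $g$ be a compatible Reeb-invariant metric, given by \cref{prop:comp_k_cont}, and take its gradient $\nabla f$. Since $R$ is Killing and $R(f)=0$, the flow $e^{tR}$ preserves both $f$ and $g$, hence it preserves $\nabla f$: $e^{tR}_*\nabla f=\nabla f\circ e^{tR}$. Moreover $g(\nabla f,R)=df(R)=0$, and since $g(R,\cdot)=\alpha$ this means $\alpha(\nabla f)=0$, so $\nabla f$ is a section of $\xi$. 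It is nonzero at $q$ because $df_q\neq0$. Evaluating at $t=\tau$ and using $e^{\tau R}(q)=q$ gives $e^{\tau R}_*(\nabla f(q))=\nabla f(q)$, which is the required nonzero fixed vector in $\xi_q$.

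It remains to check orientation preservation. The flow satisfies $\mathcal L_R\alpha=0$ and $\mathcal L_R d\alpha=0$, so $e^{\tau R}_*$ preserves $d\alpha|_{\xi_q}$, an area form; hence its determinant on $\xi_q$ is $1$. This is consistent with \cref{def:rotation_num}, where the eigenvalues already have the form $e^{\pm i2\pi\phi}$, and a fixed nonzero vector forces an eigenvalue $1$, so $e^{i2\pi\phi}=1$ directly. Nothing here looks hard. The only point needing care is the invariance $e^{tR}_*\nabla f=\nabla f$, which relies on the flow preserving both $g$ and $f$.
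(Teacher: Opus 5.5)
Your proposal is correct and follows essentially the paper's argument. The paper also takes the gradient of $f$ with respect to a Reeb-invariant compatible metric as a nonvanishing $R$-invariant section of $\xi$ along $\ell$. It then adds the oriented orthogonal complement in $\xi$ to obtain an invariant orthonormal frame, so $e^{\tau R}_*=Id$. Your argument, a fixed nonzero vector plus orientation preservation of $d\alpha|_\xi$, is the same step done pointwise at $q$.
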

\begin{proof}
	Let $g$ be a Riemannian metric invariant under the flow of $R$ and compatible with $\alpha$. Let $X=\mathrm{grad}\,f$ be the gradient of $f$. Note that $X\in\Gamma(\xi)$, indeed, making use of \eqref{eq:comp_metric} we find
	\begin{equation}
		\alpha(X)=g(X,R)=df(R)=0.
	\end{equation}
	Let $c\in\mathbb R$ and $S=f^{-1}(c)$ be the regular level set of $f$ containing $\ell$. Thus, $X$ does not have any zeroes along $S$. Hence, we can define the following orthonormal frame for $\xi|_S$
	\begin{equation}\label{eq:e_1,e_2}
		e_1=|X|^{-1}X,\qquad e_2=e_1^{\perp_\xi},
	\end{equation}
	where $e_1^{\perp_\xi}$ denotes the oriented orthonormal complement to $e_1$ in $\xi$. Since $R$ is a Killing field and $df(R)=0$ it holds $[R,X]=0$, and consequently
	\begin{equation}\label{eq:[R,e_i]=0}
		[R,e_1]=[R,e_2]=0.
	\end{equation}
	Since $\ell\subset S$, the orthonormal frame $e_1,e_2$ is well defined along $\ell$. Let $\tau>0$ be the minimal period of $\ell$, then by \eqref{eq:[R,e_i]=0} we have 
	\begin{equation}
		e^{\tau R}_*e_1=e_1,\qquad e^{\tau R}_* e_2=e_2,
	\end{equation}
	which shows that $e^{\tau R}_*=Id$. Hence the rotation number is zero and, by \cref{lem:rot_determines_flow}, all Reeb orbits are periodic.
\end{proof}
\subsection{Contact isometry group and momentum maps}
We introduce the K-contact isometry group and the K-contact momentum maps.
\begin{defi}
	A contact isometry of a contact Riemannian manifold $(M,\alpha,g)$ is a diffeomorphism $\varphi:M\to M$ such that $\varphi^*\alpha=\alpha,\varphi^*g=g$.
	The contact isometry group $G$ is defined as the collection of the contact isometries
	\begin{equation}
		G=\{\varphi\in\mathrm{Diff}(M)\,:\, \varphi^*g=g,\,\,\,\varphi^*\alpha=\alpha \}.
	\end{equation}
\end{defi}
\begin{remark}\label{rmk:contact_group}
	The contact isometry group $G$ of a closed contact Riemannian manifold $(M,\alpha,g)$ is a compact Lie group. Indeed, $G$ is a closed subgroup of the isometry group of the compact Riemannian manifold $(M,g)$, which is a compact Lie group by Myers–Steenrod theorem. Furthermore, if $(M,\alpha,g)$ is K-contact then the flow of the Reeb vector field defined by 
	\begin{equation}
		\{e^{tR}\in \mathrm{Diff}(M)\,:\, t\in\mathbb R\},
	\end{equation}
	is a central subgroup of $G$. Indeed, by definition of $G$, $\varphi^*\alpha=\alpha$ for all $\varphi\in G$. Since the Reeb vector field is uniquely determined by $\alpha$, one has $\varphi_*R=R$ for all $\varphi\in G$.
\end{remark}
\noindent 
In the remaining part of the section, given a group $G$ acting on a manifold $M$ and a subset $A\subset M$ we denote with $G_A$ the stabilizer of $A$ in $G$, i.e.
\begin{equation}
	G_A=\{\varphi\in G\,:\, \varphi(A)\subset A\}.
\end{equation}
\begin{lemma}\label{lem:G_is_T}
	Let $(M,\alpha,g)$ be a closed K-contact Riemannian manifold with contact isometry group $G$. Let $S\subset M$ be an oriented compact surface invariant under the flow of the Reeb field. Then $(S, g|_{S})$ is a flat Riemannian torus and, if there are no periodic Reeb orbits on $S$, the map 
	\begin{equation}\label{eq:G-hom}
		G_S\to\mathrm{Isom}_0(S),\qquad \varphi\mapsto \varphi|_{S},
	\end{equation}
	is a group isomorphism, where $\mathrm{Isom}_0(S)$ is the identity component of the isometry group of $(S,g|_S)$.
\end{lemma}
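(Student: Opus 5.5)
First I show that $S$ is a torus. Since $S$ is invariant under the Reeb flow, $R$ is tangent to $S$. Because $\alpha(R)=1$, $R$ never vanishes, so $R|_S$ is a nowhere-vanishing vector field on the compact oriented surface $S$. Poincaré–Hopf then gives $\chi(S)=0$, hence $S$ is a torus.

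Next I show that $g|_S$ is flat. The restriction $R|_S$ is Killing for $g|_S$: the Reeb flow consists of isometries of $g$ preserving $S$, so it acts by isometries of $(S,g|_S)$. Also $|R|_g=1$ by compatibility, since $g(R,R)=\alpha(R)=1$. On a surface, a Killing field of constant length is parallel. Indeed, for a Killing field $K$, $\nabla K$ is skew. Moreover $g(\nabla_Y K,K)=\tfrac12 Y|K|^2=0$, so $\nabla_Y K$ is orthogonal to $K$. Skewness then forces $\nabla_K K\perp K$ as well, and $g(\nabla_{K^\perp}K,K^\perp)=0$; combining these shows $\nabla K=0$ on a two-dimensional manifold. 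A nonzero parallel field forces the Gaussian curvature to vanish, so $(S,g|_S)$ is a flat torus. Then $\mathrm{Isom}_0(S)\cong T^2$ is the group of translations.

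Now assume $S$ has no periodic Reeb orbits. The map \eqref{eq:G-hom} is a well-defined homomorphism, but I must check that its image lies in $\mathrm{Isom}_0(S)$. Since the Reeb flow is central in $G$ (\cref{rmk:contact_group}), every $\varphi\in G_S$ commutes with the Reeb flow on $S$. The Reeb flow on $S$ is a linear flow by translations with irrational slope, because there are no periodic orbits, so its closure is the full translation group $T^2$. An isometry of the flat torus commuting with a dense set of translations commutes with all translations. Writing it as $x\mapsto Ax+b$, this forces $A=I$, so it is itself a translation, i.e.\ it lies in $\mathrm{Isom}_0(S)$.

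For surjectivity, let $H$ be the closure in $G$ of $\{e^{tR}\}$. It is a compact connected abelian subgroup, hence a torus, and it is contained in $G_S$ because $G_S$ is closed. Its image under restriction is compact and contains the dense Reeb translations, so the image is all of $\mathrm{Isom}_0(S)$.

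For injectivity, suppose $\varphi\in G_S$ restricts to the identity on $S$. Then $\varphi$ is an isometry fixing $S$ pointwise, so $d\varphi_x$ is the identity on $T_xS$. At $x\in S$, $d\varphi_x$ must therefore fix or reflect the unit normal $\nu$. Orientation considerations rule out the reflection: $\varphi^*\alpha=\alpha$ and $\varphi^*d\alpha=d\alpha$, so $\varphi$ preserves the volume form $\alpha\wedge d\alpha$, while a reflection would reverse it. Thus $d\varphi_x=\mathrm{Id}$ at a point, and an isometry of a connected manifold with a fixed point where the differential is the identity is the identity. This requires $M$ to be connected, which I assume (otherwise apply the argument componentwise on the component containing $S$).

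The main obstacle I expect is the surjectivity/image step. The key idea there is to pass to the closure of the Reeb flow inside the compact Lie group $G$, and to exploit the density of irrational Reeb lines on $S$ to pin down both the image and its surjectivity.
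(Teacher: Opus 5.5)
Your proof is correct, but it follows a partly different route from the paper's.

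\textbf{Flatness and density.} The paper notes that the Reeb flow preserves the oriented unit complement $R^{\perp_S}$. So $\{R,R^{\perp_S}\}$ is a global commuting orthonormal frame on $S$, which gives the torus and its flatness at once. You use Poincar\'e--Hopf plus the fact that a unit-length Killing field on a surface is parallel. Your version gives something extra: $R|_S$ is a constant field on $\mathbb R^2/\Lambda$, so ``no periodic orbits'' immediately means an irrational linear flow with dense orbits. The paper instead invokes Schwartz's generalized Poincar\'e--Bendixson theorem for density.

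\textbf{Surjectivity.} The paper shows that each orbit $G_S\cdot q$ is closed (by compactness) and dense (it contains a Reeb orbit). Hence $G_S$ acts transitively on $S$ and must exhaust the torus $\mathrm{Isom}_0(S)$. You instead take the closure of $\{e^{tR}\}$ in the compact group $G$, and observe that its image is a compact subgroup containing a dense line of translations.

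\textbf{Landing in $\mathrm{Isom}_0(S)$.} The paper tacitly regards $G_S$ as a subgroup of $\mathrm{Isom}_0(S)$ without checking that restrictions are translations. This genuinely needs the no-periodic-orbit hypothesis. For instance, take a Clifford torus in $S^3$ with a $U(2)$-invariant compatible metric: the coordinate swap lies in $G_S$ but restricts to a reflection. Your centrality argument, which forces the linear part $A$ to be the identity, supplies exactly this step.

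\textbf{Injectivity.} This is the paper's argument, with orientation preservation made explicit through $\varphi^*(\alpha\wedge d\alpha)=\alpha\wedge d\alpha$.

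One small correction to your aside: connectedness of $M$ cannot be handled componentwise. On a disconnected $M$, take the map equal to the identity on the component containing $S$ and to $e^{tR}$ on another component. It lies in $G_S$ and restricts to $\mathrm{Id}_S$, yet is not the identity. Connectedness of $M$ is simply a standing assumption.
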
 
\begin{proof}
	Let $R^{\perp_S}\in\mathrm{Vec}(S)$ be the oriented orthonormal complement of $R|_{S}$ taken w.r.t. $g|_{S}$. Since the Reeb flow acts by contact isometries and fixes $S$, we have $[R,R^{\perp_S}]=0$. Thus, the compact Riemannian surface $(S,g|_S)$ carries a global commuting orthonormal frame, therefore $S$ is a torus and $g|_{S}$ is a flat Riemannian metric.
	We now prove the injectivity of \eqref{eq:G-hom}. If $\varphi\in G_{S}$ satisfies $\varphi|_S=Id_S$, then for each $q\in S$ the map
	\begin{equation}
		d_q\varphi|_S:T_qS\to T_qS,
	\end{equation}
	is the identity map. Let $v_1$ denote the oriented unit normal to $T_qS$. Since $\varphi$ is an orientation preserving isometry fixing $S$ one has $d_q\varphi v_1=v_1$. It follows that $d_q\varphi:T_qM\to T_qM$ is the identity map. Summarizing $\varphi$ is an isometry of a Riemannian manifold satisfying 
	\begin{equation}
		\varphi(q)=q,\qquad d_q\varphi=Id_{T_qM},
	\end{equation}
	therefore $\varphi=Id_M$.
	Assume that $R$ has no periodic orbit in $S$. Then, since $S$ is a torus, the generalized {P}oincar\'{e}-{B}endixson theorem \cite{Schwartz63} implies that every orbit of $R|_{S}$ is dense in $S$. For each $q\in S$ we have
	\begin{equation}
		\{e^{tR}(q)\,:\, t\in\mathbb R\}\subset G_S\cdot q:=\{\varphi(q)\,:\, \varphi\in G_S\}\subset S.
	\end{equation}
	Thus, $G_S\cdot q$ is dense in $S$.
	Since $G_S$ is compact, being the stabilizer of a compact set in a compact group, then $G_S\cdot q$ is closed. Therefore $G_S\cdot q=S$. Thus $G_S$ is a closed subgroup of $\mathrm{Isom}_0(S)$, which is a torus, acting transitively on $S$. The only closed subgroups of a torus are subtori. Since proper subtori cannot act transitively, we deduce that $G_S=\mathrm{Isom}_0(S)$ and \eqref{eq:G-hom} is surjective.
\end{proof}
We now introduce K-contact momentum maps and show that their critical sets are foliated by Reeb orbits.
\begin{defi}
	Let $(M,\alpha,g)$ be a K-contatc Riemannian manifold with contact isometry group $G$. We say that a vector field $Y\in\mathrm{Vec}(M)$ is K-contact if $$e^{t Y}\in G,\qquad \forall\,t\in\mathbb R.$$ A smooth map $f:M\to \mathbb R$ is called a K-contact momentum map if there exists a K-contact vector field $Y$ such that 
	\begin{equation}
		df+\iota_Yd\alpha=0.
	\end{equation}
\end{defi}
\begin{remark}\label{rmk:contact_Kill}
	K-contact vector fields form a real Lie algebra of the same dimension as $G$.
\end{remark}
\begin{remark}\label{rmk:Killing_extension}
	Under the hypothesis of \cref{lem:G_is_T}, any Killing vector field for $S$ extends to a global contact Killing vector field for $M$. Indeed let $X\in \mathrm{Vec}(S)$ be a Killing vector field for $S$, then by \cref{lem:G_is_T}, there exists a 1-parameter group of contact isometries of $M$, $\{\varphi_t\}$, such that 
	\begin{equation}
		\varphi_t|_S=e^{tX},\qquad t\in\mathbb R.
	\end{equation} 
	The extension of $X$ is the vector field generating the 1-dimensional group $\{\varphi_t\}\subset \mathrm{Diff}(M)$.
\end{remark}
\begin{remark}\label{rmk:deformation}
	Let $(M,\alpha,g)$ be a compact K-contact Riemannian manifold and let $\mu:M\to \mathbb R$ be a K-contact momentum map. Then, for $\ve\in\mathbb R$ sufficiently small the form 
	\begin{equation}
		\alpha_{\ve}=\frac{\alpha}{1+\ve \mu},
	\end{equation}
	is also a K-contact form. Indeed, let $Y$ be the K-contact vector field such that $d\mu+\iota_Yd\alpha=0$, then
	\begin{equation}
		d(\mu-\alpha(Y))=d\mu-\mathcal L_Y\alpha+\iota_Yd\alpha=d\mu+\iota_Yd\alpha=0.
	\end{equation}
	 Thus, up to an additive constant we may assume $\mu=\alpha(Y)$. Then, the Reeb field of $\alpha_\ve$ is 
	\begin{equation}
		R_\ve=R+\ve Y.
	\end{equation}
	To see this, first note that, by $d\mu+\iota_Yd\alpha=0$ and $\mu=\alpha(Y)$ one obtains
	\begin{equation}
	 d\mu(R_\ve)=-d\alpha(Y,R+\ve Y)=0,\qquad \alpha(R_\ve)=1+\ve\mu,\qquad \iota_{R_\ve}d\alpha=-\ve d\mu.
	\end{equation}
	It follows that $\alpha_\ve(R_\ve)=1$ and that 
	\begin{equation}
		\begin{aligned}
			\iota_{R_\ve}d\alpha_\ve&=-\frac{\ve\iota_{R_\ve}d\mu\wedge \alpha}{(1+\ve\mu)^2}+\frac{\iota_{R_\ve}d\alpha}{1+\ve\mu}=\frac{\ve(1+\ve\mu)d\mu}{1+\ve\mu}-\frac{\ve d\mu}{1+\ve\mu}=0.
		\end{aligned}
	\end{equation}
	Since both $Y$ and $R$ are K-contact vector fields, it follows that $\mathcal L_{R_\ve}g=0$ and $\alpha_\ve$ is K-contact. Furthermore, starting from $g$ and arguing as in \cref{prop:comp_k_cont}, one can build a $R_\ve$-invariant metric $g_\ve$ which is compatible with $\alpha_\ve$ and satisfying 
	\begin{equation}
		\lim_{\ve\to 0}g_\ve=g,
	\end{equation}
	where the limit is intended in $C^\infty$ topology. Note that $\mu$ is also a K-contact momentum map for the perturbed structure $(M,\alpha_\ve,g_\ve)$.
\end{remark}
\begin{theorem}\label{thm:momentum_maps}
	Let $(M,\alpha,g)$ be a compact K-contact Riemannian manifold and let $f:M\to \mathbb R$ be a non-constant K-contact momentum map. Then the critical set of $f$ is a union of periodic Reeb orbits.
\end{theorem}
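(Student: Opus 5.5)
The plan is to show two things: that the critical set of $f$ is invariant under the Reeb flow, and that every Reeb orbit through a critical point lies in a compact one-dimensional submanifold of $M$, which forces it to be periodic. Let $Y$ be the K-contact vector field with $df+\iota_Yd\alpha=0$.

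\emph{Invariance.} First, $f$ is Reeb invariant, since $df(R)=-d\alpha(Y,R)=d\alpha(R,Y)=0$. Moreover $[R,Y]=0$, because the Reeb flow is central in $G$ by \cref{rmk:contact_group}. Since $e^{tR}$ preserves $d\alpha$ and commutes with the flow of $Y$, the form $df=-\iota_Yd\alpha$ is $e^{tR}$-invariant. Hence the critical set $\mathrm{Crit}(f)$ is a union of Reeb orbits, and it remains to show these orbits are closed.

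\emph{Killing field vanishing at a critical point.} Let $x\in\mathrm{Crit}(f)$. Then $\iota_{Y(x)}d\alpha_x=0$. Since $d\alpha_x$ has one-dimensional kernel spanned by $R(x)$, we get $Y(x)=c\,R(x)$ for some $c\in\mathbb R$. Set $Z=Y-cR$; this is a K-contact, hence Killing, vector field with $Z(x)=0$. We have $Z\not\equiv0$: otherwise $Y=cR$ and $df=-c\,\iota_Rd\alpha=0$, contradicting that $f$ is non-constant. Since $[Z,R]=0$, we get $Z(e^{tR}x)=(e^{tR})_*Z(x)=0$ for all $t$. So the whole Reeb orbit of $x$ lies in the zero set $N=\{Z=0\}$.

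\emph{Structure of the zero set.} I will use the standard fact that each connected component of the zero set of a nontrivial Killing field is a closed totally geodesic submanifold of even codimension. The proof is the usual one: via the exponential map at a zero $p$, $N$ corresponds near $p$ to the kernel of the skew-symmetric endomorphism $\nabla Z(p)$. In dimension three, since $Z\not\equiv 0$, each component has dimension $1$ or $3$, and dimension $3$ is excluded. The component $N_x$ containing $x$ contains the orbit of $x$, on which $R\neq0$, so $N_x$ has dimension $1$. Being a closed subset of the compact manifold $M$, $N_x$ is a compact connected one-manifold, i.e.\ an embedded circle.

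\emph{Conclusion.} $R$ is tangent to $N_x$ and nowhere zero, so its flow restricted to the circle $N_x$ is a non-vanishing flow on $S^1$. Such a flow is periodic and its orbit is the whole circle. Therefore the Reeb orbit of $x$ equals $N_x$ and is periodic. As $x\in\mathrm{Crit}(f)$ was arbitrary, and $\mathrm{Crit}(f)$ is Reeb invariant, $\mathrm{Crit}(f)$ is a union of periodic Reeb orbits.

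The main obstacle is the structural fact about zero sets of Killing fields; I would justify it via the linearization $\nabla Z(p)$ and the exponential map, as in Kobayashi's theorem. Everything else is a direct computation from the definitions.
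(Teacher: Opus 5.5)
Your proof is correct, and it takes a genuinely different route from the paper's.

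\textbf{The paper's route.} The paper works through a regular level set $S=f^{-1}(c)$. If $S$ carries a periodic Reeb orbit, then \cref{lem:R(f)=0} and \cref{lem:rot_determines_flow} make every orbit periodic. Otherwise, the generalized Poincar\'e--Bendixson theorem (Schwartz) and \cref{lem:G_is_T} show that the stabilizer $H=G_S$ is a $2$-torus acting on $M$. At a critical point $q$, the isotropy group $H_q$ contains $\{e^{tZ}\}$, with the same $Z=Y-bR$ as yours, but contains no small-time Reeb flows. Hence $\dim H_q=1$, and $H\cdot q\cong H/H_q$ is a circle containing the Reeb orbit of $q$.

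\textbf{Your route.} You use only the single Killing field $Z$ together with Kobayashi's structure theorem for zero sets of Killing fields. The component of $\{Z=0\}$ through $x$ is compact, totally geodesic, of even codimension, and contains a nonsingular Reeb orbit. It is therefore a circle, and that circle must be the orbit.

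\textbf{Comparison.} Your argument avoids the case split, Schwartz's theorem, and the compactness of $G$ (Myers--Steenrod). It also yields extra information: the orbit is a whole component of $\{Z=0\}$, in particular a closed geodesic. The price is an external structure theorem. Your sketch of its local proof is right: near a zero $p$ one has $e^{tZ}\circ\exp_p=\exp_p\circ e^{tA}$ with $A v=\nabla_v Z$, so $\{Z=0\}=\exp_p(\ker A)$ locally. In fact you can read off the dimension directly. From $[R,Z]=0$ and $Z(x)=0$ you get $\nabla_{R(x)}Z=0$, so $R(x)\in\ker\nabla Z(x)$. Moreover $\nabla Z(x)\neq 0$, since otherwise $Z\equiv 0$. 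The paper's argument instead uses the torus-symmetry machinery (\cref{lem:G_is_T}, \cref{rmk:Killing_extension}), which it needs again in \cref{lem:smooth_vol}.

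\textbf{Two small remarks.} Both proofs use dimension three essentially. Both also tacitly need $M$ connected: in your case this is what excludes a $3$-dimensional component of $\{Z=0\}$, and the statement fails for disconnected $M$. Separately, the commutation $[R,Y]=0$ is not needed for Reeb-invariance of the critical set. That already follows from $f\circ e^{tR}=f$.
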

\begin{proof}
Observe that $f$ is invariant under the Reeb flow and under the flow of $Y$, indeed 
\begin{equation}\label{eq:Reeb_in_f}
	df(R)=-d\alpha(Y,R)=0,\qquad df(Y)=-d\alpha(Y,Y)=0.
\end{equation}
Let $c>0$ be a regular value of $f$, which exists since $f$ is non constant, and let $S=f^{-1}(c)$. If $S$ contains a periodic Reeb orbit then by \cref{lem:R(f)=0}, all Reeb orbits are periodic, since by \eqref{eq:Reeb_in_f}, $f$ is invariant under the flow of $R$. In this case the proof is concluded. 

Otherwise, assume that $S$ contains no periodic Reeb orbits. Then, applying \cref{lem:G_is_T} we deduce that $H:=G_{S}$ is isomorphic to a torus. Let now $q\in M$ be a critical point of $f$. It is sufficient to show that the Reeb orbit containing $q$ is periodic. Let $Y$ be the K-contact vector filed such that 
\begin{equation}
	df+\iota_Yd\alpha=0.
\end{equation}
Since $\ker d\alpha=\mathrm{span}\{R\}$ and $d_qf=0$, there exists a constant $b$ such that 
\begin{equation}
	Y|_q=bR|_q.
\end{equation}
Note that the vector field
\begin{equation}
	Z:=Y-bR,
\end{equation}
is K-contact, tangent to $S$  and vanishes at $q$. Let $$H_q=\{\varphi\in H\,:\, \varphi(q)=q\},$$ which is a closed subgroup of $H$. Note that 
\begin{equation}
	1\leq 	\dim H_q\leq 2.
\end{equation}
The lower bound follows from the inclusion $\{e^{tZ}\}_{t\in\mathbb R}\subset H_q$, $t\in\mathbb R$. The upper bound follows from $H_q\subset H$ and the fact that $\dim H=2$, since $H$ is a torus. Note that $H_q$ cannot be $2$-dimensional, for otherwise it would hold $H_q=H$, while for $t\in\mathbb R$ sufficiently small one has $e^{tR}\subset H\setminus H_q$, since the Reeb vector field never vanishes and its small times flow fixes no point. Thus $\dim H_q=1$ and, being $H_q$ a closed subgroup of the torus $H$, $H/H_q\simeq S^1$, where $H/H_q$ denotes the quotient of $H$ by $H_q$ . Therefore 
\begin{equation}
	H\cdot q=\{\varphi(q)\,:\, \varphi\in\ H\}\simeq H/H_q\simeq S^1,
\end{equation}
where the diffeomorphism between $H/H_q$ and $H\cdot q$ is given by 
\begin{eqnarray}
	H/H_q\to H\cdot q,\qquad [\varphi]\mapsto \varphi(q).
\end{eqnarray}
Since the Reeb field is tangent to $H\cdot q$, and the latter is 1-dimensional, then $H\cdot q$ is actually a periodic orbit of the Reeb field. 
\end{proof}
The following fact holds in much greater generality \cite{Taubes2007}. Here we prove it in the much simpler specific setting of K-contact geometry.
\begin{corollary}\label{thm:one_periodic_orbit}
	Let $\alpha$ be a K-contact form on a compact 3-manifold $M$. Then the associated Reeb vector field $R$ admits at least one periodic orbit.
\end{corollary}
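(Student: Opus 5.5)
The plan is to produce a non-constant K-contact momentum map and then apply \cref{thm:momentum_maps}. By \cref{prop:comp_k_cont} we fix a compatible Reeb-invariant metric $g$, so that $(M,\alpha,g)$ is a compact K-contact Riemannian manifold. Let $G$ be its contact isometry group, which is a compact Lie group by \cref{rmk:contact_group}. Let $T$ be the closure in $G$ of the Reeb flow $\{e^{tR}\}_{t\in\mathbb R}$. Then $T$ is a compact connected abelian Lie group, so it is a torus of some dimension $k\geq 1$. We split into two cases.

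\emph{Case $k=1$.} Here $T\simeq S^1$ and the Reeb flow is itself periodic: there is $\tau>0$ with $e^{\tau R}=Id_M$. Every Reeb orbit is then periodic, and there is nothing to prove.

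\emph{Case $k\geq 2$.} Let $\mathfrak t$ be the Lie algebra of $T$, realised as a space of K-contact vector fields (\cref{rmk:contact_Kill}); it contains $R$. Choose $Y\in\mathfrak t$ that is not a multiple of $R$. Set $f=\alpha(Y)$. Since $Y$ preserves $\alpha$, Cartan's formula gives $0=\mathcal L_Y\alpha=d(\alpha(Y))+\iota_Yd\alpha$, so $f$ is a K-contact momentum map for $Y$.

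It remains to check that $f$ is non-constant, which I expect to be the only delicate point. Suppose $f\equiv c$. Then $\iota_Yd\alpha=-df=0$, so $Y$ lies in $\ker d\alpha=\mathrm{span}\{R\}$ at every point; hence $Y=hR$ for some function $h$, and $h=\alpha(Y)=c$. This gives $Y=cR$, contradicting the choice of $Y$. So $f$ is non-constant.

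Since $M$ is compact, $f$ attains its maximum, so its critical set is non-empty. By \cref{thm:momentum_maps} this critical set is a union of periodic Reeb orbits, and therefore $R$ has at least one periodic orbit. This completes the plan in both cases.
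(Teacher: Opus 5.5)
Your proof is correct and is essentially the paper's argument. The paper also splits into two cases. In the first ($\dim G=1$) the Reeb flow is a circle and every orbit is periodic. In the second there is a K-contact field $Y$ not proportional to $R$, so $\alpha(Y)$ is a non-constant momentum map whose non-empty critical set consists of periodic orbits by \cref{thm:momentum_maps}. The only difference is cosmetic: you work with the torus $T=\overline{\{e^{tR}\}}$ instead of the whole group $G$. This makes your first case slightly cleaner, since it does not need $G$ itself to be connected.
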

\begin{proof}
Let $g$ be a K-contact metric compatible with $\alpha$ and let $G$ be the contact isometry group of $(M,\alpha,g)$. Then, according to \cref{rmk:contact_group}, $G$ is a compact Lie group and the Reeb flow $\Phi(R)=\{e^{tR}\mid t\in\mathbb R\}$, is a central subgroup of $G$. If $\dim(G)=1$, then $G=\Phi(R)\simeq S^1$, where the latter isomorphism equivalence follows from the classification of compact 1-dimensional Lie groups. In such case all Reeb orbits are periodic and the proof is concluded.

Assume that $\dim(G)>1$. \begin{comment}Let $\mathfrak g$ be the Lie algebra of $G$. For any $v\in \mathfrak g$ we define the vector field $\bar v\in \mathfrak X(M)$ as
\begin{equation}
	\bar v_q:=\frac{d}{dt}\left(\exp(tv)\cdot q\right)|_{t=0},\qquad \forall\, q\in M,
\end{equation}
where $\exp:\mathfrak g\to G$ is the Lie group exponential map. Let us denote 
\begin{equation}
	K=\{\bar v\in\mathfrak X(M)\,:\, v\in\mathfrak g\}.
\end{equation}
\end{comment}
Let $K\subset \mathrm{Vec}(M)$ be the Lie algebra of K-contact vector fields. According to \cref{rmk:contact_Kill} $\dim K=\dim G>1$. Therefore, there exists $Y\in K$ which is not a multiple of the Reeb field $R$. Since $R$ is transverse to $\xi$, there exist $f\in C^\infty(M)$ and $X\in\Gamma(\xi)$ such that $Y=fR+X$. Condition $\mathcal L_Y\alpha=0$ then reduces to 
\begin{equation}\label{eq:X_def}
	\iota_Xd\alpha+df=0.
\end{equation}
The latter equality implies that $df(R)\equiv 0$ and that $X$ vanishes exactly at critical points of $f$. It follows that, if $f$ were constant then we would have $Y=fR$, contradicting the fact that $Y$ and $R$ are linearly independent. Therefore $f$ is a non-constant momentum map. By compactness of $M$, $f$ has at least one critical point. We conclude applying \cref{thm:momentum_maps}
\end{proof}
\section{Lens spaces}\label{sec:lens}
Given a couple of co-prime integers $(p,q)$, with $p>0$, the lens space $L(p,q)$ is a smooth oriented manifold which can be described as the gluing of two solid tori.  Let $V_0,V_1$ be two copies of the solid torus $D^2\times S^1$ with its natural orientation.  Let $(\theta,z)$ be $2\pi$-periodic coordinates on the boundary $\partial V_j=\partial D^2\times S^1$, and consider the following orientation reversing diffeomorphism 
\begin{equation}\label{eq:Lp_glue}
	\varphi:\partial V_1\to \partial V_0,\qquad \varphi(\theta,z)=(-q\theta+mz,p\theta+sz),
\end{equation}
where $m,s\in\mathbb Z$ satisfy $mp+sq=1$. The lens space $L(p,q)$ is the oriented manifold obtained gluing $V_0$ and $V_1$ along their boundaries, by the diffeomorphism $\varphi$:
\begin{equation}\label{eq:Lp_Hed}
	L(p,q)=V_0\sqcup_{\varphi}V_1.
\end{equation}
Recall that given two topological spaces $X,Y$, two subsets $A\subset X, B\subset Y$ and a continuous map $\varphi:B\to A$ the topological space $X\sqcup_\varphi Y$ is defined as $X\sqcup Y/\sim$ where, for $x\neq y$, $x\sim y$ if and only if $x\in A$, $y\in B$ and $\varphi(y)=x$.

\subsection{Toroidal atlases} In this section we give a two charts description of $L(p,q)$. We denote
\begin{equation}
	U=D^2\times S^1\setminus \partial D^2\times S^1,\qquad \ell=\{0\}\times S^1\subset U.
\end{equation}
On $U\setminus\ell$ we introduce cylindrical coordinates $(r,\theta,z)$ by means of the diffeomorphism 
\begin{equation}\label{eq:cyl_coords_U}
	(0,1)\times (\mathbb R/2\pi\mathbb N)\times (\mathbb R/2\pi\mathbb N)\to U\setminus\ell,\qquad (r,\theta,z)\mapsto (re^{i\theta},z).
\end{equation}
\begin{defi}\label{def:toroidal}
	Let $M$ be a three manifold. A collection of pairs $\{(U_j,\psi_j)\}$ is called a \emph{toroidal atlas} if $\{U_j\}$ is an open cover of $M$ and for each $j$ the map 
	\begin{equation}
		\psi_j:U\to  U_j,
	\end{equation}
	is a diffeomorphism. For each $j$, the couple $(U_j,\psi_j)$ is called a toroidal chart.
\end{defi}
We would like to to describe $L(p,q)$ as the union of two toroidal charts.
\begin{prop}\label{prop:charts}
	Let $M$ be a smooth three manifold admitting a toroidal atlas composed by two charts $(U_0,\psi_0)$, $(U_1,\psi_1)$ such that 
	\begin{equation}\label{eq:def_overalp}
		\psi_0(U\setminus\ell)=\psi_1(U\setminus\ell)=U_0\cap U_1.
	\end{equation}
	Assume that the change of coordinates map has the following expression in coordinates \eqref{eq:cyl_coords_U}
	\begin{equation}\label{eq:psi_def}
		\psi_0^{-1}\circ \psi_1:U\setminus\ell\to U\setminus \ell,\qquad \psi_0^{-1}\circ \psi_1(r,\theta,z)=(1-r,-q\theta+mz,p\theta+sz),
	\end{equation}
	where $m,s\in\mathbb Z$ and $mp+sq=1$. Then $M$ is diffeomorphic to $L(p,q)$.
\end{prop}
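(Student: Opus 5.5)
We construct an explicit diffeomorphism $M\to L(p,q)=V_0\sqcup_\varphi V_1$ by shrinking the two charts to closed solid tori that meet along a common torus. Concretely, set $D_r=\{|w|\le r\}$ and
\begin{equation}
	W_0=\psi_0\big(\{(w,z)\in U: |w|\le 1/2\}\big),\qquad W_1=\psi_1\big(\{(w,z)\in U: |w|\le 1/2\}\big).
\end{equation}
Since $\psi_0(U\setminus\ell)=\psi_1(U\setminus\ell)=U_0\cap U_1$, any point of $M$ outside $U_0\cap U_1$ lies on $\psi_0(\ell)$ or $\psi_1(\ell)$. For a point $x=\psi_1(r,\theta,z)$ in the overlap, its $\psi_0$-radius is $1-r$ by \eqref{eq:psi_def}. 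Hence exactly one of $r\le 1/2$ or $1-r\le 1/2$ holds, except on the torus $r=1/2$. It follows that $M=W_0\cup W_1$ and $W_0\cap W_1=\psi_1(\{r=1/2\})=\psi_0(\{r=1/2\})$, a common boundary torus.

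Next, identify $W_j$ with $V_j=D^2\times S^1$ through $\Psi_j(w,z)=\psi_j(w/2,z)$. On the boundary, in the coordinates $(\theta,z)$ of $\partial V_j$, formula \eqref{eq:psi_def} gives
\begin{equation}
	\Psi_0^{-1}\circ\Psi_1(\theta,z)=(-q\theta+mz,\,p\theta+sz),
\end{equation}
which is exactly the gluing map $\varphi$ of \eqref{eq:Lp_glue}. The maps $\Psi_0,\Psi_1$ therefore descend to a continuous bijection $V_0\sqcup_\varphi V_1\to M$. Since the source is compact and $M$ is Hausdorff, this bijection is a homeomorphism. It is a diffeomorphism away from the gluing torus.

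The main obstacle is smoothness across the gluing torus, because the smooth structure on $V_0\sqcup_\varphi V_1$ is only defined up to a choice of collars. The plan is to use the overlap chart itself as the collar. Near $\{r=1/2\}$, the coordinates $(r,\theta,z)$ from $\psi_1$ restricted to $1/2-\epsilon<r<1/2+\epsilon$ give a smooth chart of $M$. In this chart, $W_1$ is the side $r\le 1/2$ and $W_0$ is the side $r\ge 1/2$, with $\psi_0$-coordinates $(1-r,-q\theta+mz,p\theta+sz)$. This collar structure is compatible with the standard smooth structure on the glued manifold: the radial coordinates $r$ and $1-r$ are glued by the reflection $r\mapsto 1-r$, and the angular part by the linear map $\varphi$. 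The map is therefore smooth with smooth inverse in these collar coordinates.

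The gluing also respects orientations. The boundary map $\varphi$ is orientation reversing, since its determinant is $-qs-mp=-1$, and $\psi_0^{-1}\circ\psi_1$ reverses $r$ as well. Consequently $M$ is diffeomorphic, as an oriented manifold once $M$ is oriented via $\psi_0$, to $L(p,q)$.
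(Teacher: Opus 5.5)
Your proposal is correct and follows the paper's proof essentially step for step. Like the paper, you split $M$ into the two closed solid tori $\psi_j(\{r\le 1/2\})$, show they cover $M$ and meet exactly along the common torus $\{r=1/2\}$ where the identification is the gluing map $\varphi$, and define the map from $V_0\sqcup_\varphi V_1$ piecewise by the charts. Your collar argument at the gluing torus, which uses the $\psi_1$-chart near $r=1/2$ as a two-sided collar, fills in a point the paper passes over by simply asserting that a bijection restricting to smooth embeddings on each piece is a diffeomorphism.
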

\begin{proof}
	We denote with $V\subset U$ the following set 
	\begin{equation}
		V=\left\{(re^{i\theta},z)\in U\,:\, r\leq 1/2\right\}\simeq D^2\times S^1.
	\end{equation}
	Since the $\psi_j$'s are charts, the following maps
	\begin{equation}
		\varphi_j=\psi_j|_{V}:V \to M,\qquad j=0,1,
	\end{equation}
	are smooth embeddings. We observe that 	
		\begin{equation}\label{eq:union_charts}
			M=\varphi_0(V)\cup\varphi_1(V).
	\end{equation}
	Indeed, fix $x\in M$. Then there exists $(re^{i\theta},z)\in U$ such that $x=\psi_j(re^{i\theta},z)$ for some $j\in\{0,1\}$. Assume for instance $j=1$. If $r\leq 1/2$ then $x\in \varphi_1(V)$. Otherwise, if $r> 1/2$ we have 
\begin{eqnarray}
	\psi_0^{-1}(x)=\psi_0^{-1}\circ\psi_1(re^{i\theta},z)=((1-r)e^{i(-q\theta+mz)},p\theta+sz)\in V.
\end{eqnarray}
Applying $\psi_0$ to both sides of the latter inclusion we find that $x\in \varphi_0(V)$. The case $j=0$ is analogous. Next, we observe that 
\begin{equation}\label{eq:intersection_charts}
	\varphi_0(V)\cap\varphi_1(V)=\varphi_0(\partial V)=\varphi_1(\partial V).
\end{equation}
Indeed, let $(r e^{i\theta},z),(r'e^{i\theta'},z')\in V$, then 
\begin{equation}
	\begin{aligned}
		\varphi_0(r e^{i\theta},z)=\varphi_1(r'e^{i\theta'},z')&\iff (r e^{i\theta},z)=\varphi_0^{-1}\varphi_1(r'e^{i\theta'},z')\\
		&\iff (r e^{i\theta},z)=((1-r')e^{i(-q\theta'+mz')},p\theta'+sz').
	\end{aligned}
\end{equation}
Since $r,r'\in[0,1/2]$ we find that the latter equation has the unique solution
\begin{equation}
	r=r'=1/2,\qquad (\theta,z)=(-q\theta'+mz',p\theta'+sz').
\end{equation}
Defining the map 
\begin{equation}
	\varphi:\partial V\to \partial V,\qquad \varphi(\theta,z)=(-q\theta+mz,p\theta+sz),
\end{equation}
we have proved that 
\begin{equation}\label{eq:F_well_defined}
	\varphi_0(x_0)=\varphi_1(x_1)\iff x_0,x_1\in \partial V\,\,\text{and}\,\, x_0=\varphi(x_1).
\end{equation}
Let $V_0,V_1$ be two copies of $V$.  By definition $L(p,q)=V_0\sqcup_{\varphi} V_1$. We define the map 
\begin{equation}
	F:V_0\sqcup_{\varphi} V_1\to M,\qquad F|_{V_j}=\varphi_j,\qquad j=0,1.
\end{equation}
Such map is well defined due to \eqref{eq:F_well_defined}. Moreover, from \eqref{eq:union_charts} and \eqref{eq:F_well_defined} we see that $F$ is bijective. In conclusions, $F$ is a bijection which restricts to the smooth embedding $\varphi_0$ on $V_0$, and to the smooth embedding $\varphi_1$ on $V_1$. Hence $F$ is a diffeomorphism.
\end{proof}
We exploit the toroidal atlas just introduced to define an action of the 2-torus $T^2$ on $L(p,q)$. Given a left action $\sigma: G\times X\to X$ of a group $G$ on a topological space $X$, we denote 
\begin{equation}
g\cdot x:=\sigma(g,x),\qquad g\in G, x\in X.
\end{equation} 
We identify the 2-torus with $(\mathbb R/2\pi\mathbb N)\times (\mathbb R/2\pi\mathbb N)$ and denote its elements with $(\theta',z')\in T^2$.
\begin{prop}\label{prop:T2_action}
	There exists a $T^2$-action on $L(p,q)$ satisfying
	\begin{equation}\label{eq:action0}
		(\theta',z')\cdot \psi_0(re^{i\theta},z)=\psi_0(re^{i(\theta+\theta')},z+z'),
	\end{equation}
for all $(\theta',z')\in T^2$ and all $(re^{i\theta},z)\in U$.
\end{prop}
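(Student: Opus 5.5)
Since $L(p,q)=V_0\sqcup_\varphi V_1$, or equivalently the two-chart description of \cref{prop:charts}, I would define the action chart by chart and check that the two definitions agree on the overlap. On $U_0=\psi_0(U)$ the action is forced by \eqref{eq:action0}: it is the standard rotation $(\theta',z')\cdot(re^{i\theta},z)=(re^{i(\theta+\theta')},z+z')$ transported by $\psi_0$. This is a smooth $T^2$-action on $U$, since rotation of the disk preserves $D^2$ and fixes the core $\ell$ setwise. On $U_1=\psi_1(U)$ I would use the standard rotation composed with a group automorphism of $T^2$, chosen so that it matches the first action under the change of coordinates \eqref{eq:psi_def}.

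To find this automorphism, write $A=\begin{pmatrix}-q&m\\ p&s\end{pmatrix}$. Then $\det A=-qs-mp=-1$, so $A\in GL_2(\mathbb Z)$ and $A$ defines an automorphism of $T^2=\mathbb R^2/2\pi\mathbb Z^2$. In cylindrical coordinates on $U\setminus\ell$, the transition map $\psi_0^{-1}\circ\psi_1$ sends $(r,\theta,z)$ to $(1-r,A(\theta,z))$, and it is affine-linear in the angular variables. For $x=\psi_1(re^{i\theta},z)$ I would define
\begin{equation}
	(\theta',z')\cdot\psi_1(re^{i\theta},z)=\psi_1\bigl(re^{i(\theta+\theta'')},z+z''\bigr),\qquad (\theta'',z'')=A^{-1}(\theta',z').
\end{equation}
Since $g\mapsto A^{-1}g$ is a group homomorphism, this is a smooth $T^2$-action on $U_1$.

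The consistency check on $U_0\cap U_1=\psi_1(U\setminus\ell)$ is then a direct computation with \eqref{eq:psi_def}. We have $\psi_0^{-1}\psi_1(r,\theta+\theta'',z+z'')=(1-r,A(\theta,z)+A(\theta'',z''))=(1-r,A(\theta,z)+(\theta',z'))$, which is exactly the $\psi_0$-action of $(\theta',z')$ applied to $\psi_0^{-1}\psi_1(r,\theta,z)$. So the two locally defined maps $T^2\times U_j\to U_j$ agree on $T^2\times(U_0\cap U_1)$ and glue to a smooth map $T^2\times L(p,q)\to L(p,q)$. The identity and composition axioms hold because they hold in each chart.

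The main obstacle I expect is bookkeeping rather than substance. Each chart must be preserved by the action: the action on $U_0$ preserves $U_0$, and the action on $U_1$ preserves $U_1$, because both are rotations in the disk factor together with translations in $z$. I also need that points of $\psi_1(\ell)$ lie outside $U_0$, so that no compatibility condition is required there. Finally, one must confirm that \eqref{eq:psi_def} is genuinely linear in $(\theta,z)$ modulo $2\pi$, which holds because $A$ has integer entries; this is what makes the conjugated action well defined on the torus.
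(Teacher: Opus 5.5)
Your proposal is correct and follows the paper's proof: its action on the second chart is the standard rotation twisted by $A^{-1}=\begin{pmatrix}-s&m\\ p&q\end{pmatrix}$, i.e.\ $(\theta',z')\mapsto(-s\theta'+mz',\,p\theta'+qz')$. The compatibility on $U_0\cap U_1$ is checked there by the same linear computation in the angular variables that you describe.
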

\begin{proof}
Let $\sigma_0,\sigma_1:T^2\times U\to U$ denote the following actions of $T^2$ on $U$
\begin{equation}
	\begin{aligned}
	&\sigma_0((\theta',z'),(re^{i\theta},z))=\left(re^{i(\theta+\theta')},z+z'\right),\\
	&\sigma_1((\theta',z'),(re^{i\theta},z))=\left(re^{i(\theta-s\theta'+mz')},z+p\theta'+qz'\right).
	\end{aligned}
\end{equation}
We then define the action $\sigma:T^2\times L(p,q)\to L(p,q)$ as 
\begin{equation}
	\sigma(\cdot,\cdot)=\begin{cases}
		\psi_0\circ\sigma_0(\cdot,\psi_0^{-1}(\cdot)),\qquad \text{on $T^2\times U_0$},\\
		\psi_1\circ\sigma_1(\cdot,\psi_1^{-1}(\cdot)),\qquad \text{on $T^2\times U_1$.}
	\end{cases}
\end{equation}
To ensure that $\sigma$ is well defined on the overlap $T^2\times U_0\cap U_1$ we need to check that 
\begin{equation}
	\psi_0\circ\sigma_0(\cdot,\psi_0^{-1}(\cdot))=\psi_1\circ\sigma_1(\cdot,\psi_1^{-1}(\cdot)),\qquad \text{on $T^2\times U_0\cap U_1$},
\end{equation}
or, equivalently, we need to show that 
\begin{equation}\label{eq:well_def_mu}
	\sigma_1(\cdot,\cdot)=(\psi_0^{-1}\circ\psi_1)^{-1}\circ\sigma_0(\cdot,\psi_0^{-1}\circ\psi_1(\cdot)),\qquad \text{on $T^2\times U_0\cap U_1$}.
\end{equation}
According to the explicit expression of $\psi_0^{-1}\circ\psi_1$ in \eqref{eq:psi_def} we have 
\begin{equation}
	\begin{aligned}
		\psi_0^{-1}\circ \psi_1(re^{i\theta},z)&=((1-r)e^{i(-q\theta+mz)},p\theta+sz),\\ (\psi_0^{-1}\circ \psi_1)^{-1}(re^{i\theta},z)&=((1-r)e^{i(-s\theta+mz)},p\theta+qz).
	\end{aligned}
\end{equation}
Therefore we can directly check the validity of \eqref{eq:well_def_mu}:
\begin{equation}
	\begin{aligned}
		(\psi_0^{-1}\circ\psi_1)^{-1}\circ\sigma_0((\theta',z'),\psi_0^{-1}\circ\psi_1(re^{i\theta},z))&=(\psi_0^{-1}\circ\psi_1)^{-1}((1-r)e^{i(\theta'-q\theta+mz)},z'+p\theta+sz)\\
		&=(re^{i(-s\theta'+mz'+\theta)},p\theta'+qz'+z)=\sigma_1((\theta',z'),re^{i\theta}).
	\end{aligned}
\end{equation}
Thus $\sigma$ is well defined and, by definition, it satisfies \eqref{eq:action0}.
\end{proof}
\section{Tubular neighborhoods of Reeb orbits}\label{sec:tubes}
In this section we study the normal exponential map from a periodic orbit of the Reeb vector field in a K-contact Riemannian manifolds. In particular, we give an expression of the contact form in normal coordinates. We begin by recalling some standard facts concerning tubular neighborhoods of compact submanifolds in Riemannian geometry. 
\subsection{Tubes in Riemannian geometry }\label{ssec:tube_Riemann}
Let $(M,g)$ be a compact Riemannian manifold and let $\ell$ be a compact submanifold. The normal bundle of $\ell$ is the orthogonal complement of $T\ell$
\begin{equation}
	\nu(\ell)=(T\ell)^\perp.
\end{equation}
Given $s>0$ we define the following subsets of $\nu(\ell)$
\begin{equation}
	\nu^{<s}(\ell)=\{v\in \nu(\ell)\,:\, |v|<s\},\qquad \nu^s(\ell)=\{v\in \nu(\ell)\,:\, |v|=s\}.
\end{equation}
We define the normal exponential map of $\ell$ as 
\begin{equation}
	\exp^\perp:\nu(\ell)\to M,\qquad \exp^\perp(v)=\exp(v),
\end{equation}
where $\exp:TM\to M$ denotes the Riemannian exponential map. We denote with 
\begin{equation}
	\delta:M\to\mathbb R,\qquad \delta(x)=\inf_{y\in \ell} d(x,y),
\end{equation}
the distance from $\ell$ function, where $d(\cdot,\cdot)$ is the Riemannian distance. 
\begin{rmk}\label{rmk:geodesics_orthognal}
	Let $x\in M$ with $\delta(x)>0$. By compactness of $M$ and $\ell$ there exists an arc-length paramentrized geodesic $\sigma:[0,\delta(x)]\to M$ such that $\sigma(0)=0$ and $\sigma(\delta(x))=x$. It is a well know consequence of first order optimality conditions that $\dot\sigma(0)\in \nu^1(\ell)$. In particular \[\sigma(t)=\exp^\perp(t\dot\sigma(0)),\qquad \forall\,t\in [0,\delta(x)].\] See for instance \cite[Chap.\,III]{Sakai1996}.
\end{rmk}
\begin{defi}\label{def:cut_time}
	The cut time function is defined as 
	\begin{equation}
		t_{\cut}:\nu^1(\ell)\to \mathbb R,\qquad t_{\cut}(v)=\inf\{t>0\,:\, \delta(\exp^\perp(tv))=t\}.
	\end{equation}
	The injectivity domain is defined as 
	\begin{equation}
		U(\ell)=\left\{tv\in \nu(\ell)\,:\, v\in \nu^1(\ell),\,\,0\leq t< t_\cut(v)\right\}.
	\end{equation}
	The cut locus is defined as 
	\begin{equation}
		\mathrm{cut}(\ell)=\{\exp^\perp(t_{\cut}(v)v)\,:\, v\in \nu^1(\ell)\}.
	\end{equation}
\end{defi}
\noindent
The following theorem is well known and can be obtained from \cite[Chap.\,III]{Sakai1996}. In the case $\ell=\{pt.\}$ its statement can be found in most references on Riemannian geometry (e.g. \cite{DoCarmo}).
\begin{theorem}\label{thm:cut_locus}
	The cut time function is continuous and positive. In particular the injectivity domain $U(\ell)\subset \nu(\ell)$ is open, the cut locus $\cut(\ell)\subset M$ is closed and the map
	\begin{equation}
		\exp^\perp|_{U(\ell)}:U(\ell)\to M\setminus \cut(\ell),
	\end{equation} 
	is a diffeomorphism. 
\end{theorem}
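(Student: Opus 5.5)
The plan is to follow the classical route for the case of a point, as in \cite[Chap.\,III]{Sakai1996}, and argue in three steps: positivity of $t_{\cut}$, continuity of $t_{\cut}$, and then the diffeomorphism property on $U(\ell)$.

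\textbf{Positivity.} By the tubular neighborhood theorem, $\exp^\perp$ is a diffeomorphism from $\nu^{<\epsilon}(\ell)$ onto an open neighborhood of $\ell$ for some $\epsilon>0$. The zero section is compact, and $d\exp^\perp$ is the identity along it under the splitting $T\ell\oplus\nu(\ell)$, so injectivity on a uniform tube follows by the usual compactness argument. Take $v\in\nu^1(\ell)$ and $t<\epsilon$, and put $x=\exp^\perp(tv)$. By \cref{rmk:geodesics_orthognal}, a minimizing segment from $\ell$ to $x$ has the form $s\mapsto\exp^\perp(sw)$ with $w\in\nu^1(\ell)$ and $\delta(x)\le t<\epsilon$. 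Injectivity on the tube then forces $\delta(x)w=tv$, hence $\delta(x)=t$. Therefore $t_{\cut}\ge\epsilon$. (The infimum in \cref{def:cut_time} should be read as a supremum, i.e. the last time the normal geodesic minimizes; I use that meaning throughout.)

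\textbf{Continuity.} I characterize $t_{\cut}(v)$ as the first time $t_0$ at which one of the following holds: either $\exp^\perp(t_0v)$ is a focal point of $\ell$ along the geodesic, i.e. $d\exp^\perp$ is singular at $t_0v$, or there exists $w\neq v$ in $\nu^1(\ell)$ with $\exp^\perp(t_0w)=\exp^\perp(t_0v)$. This is the standard dichotomy, proved by taking limits of minimizing segments and using that geodesics do not minimize past focal points. With this characterization in hand:
\begin{itemize}
\item \emph{Upper semicontinuity} follows directly from the closedness of the set $\{(v,t): \delta(\exp^\perp(tv))=t\}$.
\item \emph{Lower semicontinuity} uses the dichotomy. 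Take $v_k\to v$ with $t_{\cut}(v_k)\to T$. If infinitely many of the $v_k$ are focal at their cut time, then $Tv$ is focal by continuity, so $t_{\cut}(v)\le T$. Otherwise there are $w_k\ne v_k$ with $\exp^\perp(t_kw_k)=\exp^\perp(t_kv_k)$. Pass to a limit $w$. If $w\ne v$, we get the second alternative at time $T$. If $w=v$, then $\exp^\perp$ fails to be locally injective near $Tv$, so $Tv$ is focal.
\end{itemize}
Once $t_{\cut}$ is continuous, $U(\ell)$ is open because it is a sublevel set defined by a continuous function. The cut locus $\cut(\ell)$ is closed because it is the continuous image of the compact set $\nu^1(\ell)$ under $v\mapsto\exp^\perp(t_{\cut}(v)v)$.

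\textbf{Diffeomorphism.} The key facts are as follows.
\begin{itemize}
\item \emph{Immersion.} By the dichotomy, no point of $U(\ell)$ is focal, so $\exp^\perp|_{U(\ell)}$ is a local diffeomorphism.
\item \emph{Injectivity.} Suppose $\exp^\perp(sv)=\exp^\perp(tw)$ with $s<t_{\cut}(v)$ and $t<t_{\cut}(w)$. Then both normal geodesics minimize, so $s=t=\delta(x)$. If $v\ne w$, concatenating the two segments gives a broken minimizing curve, and slightly extending the $v$-geodesic beyond $s$ would then fail to minimize. That contradicts $s<t_{\cut}(v)$.
\item \emph{Surjectivity onto $M\setminus\cut(\ell)$.} Every $x$ is reached by a minimizing segment $\exp^\perp(sv)$ with $s\le t_{\cut}(v)$, and if $x\notin\cut(\ell)$ then $s<t_{\cut}(v)$.
\end{itemize}
A bijective local diffeomorphism is a diffeomorphism, which completes the argument. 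I expect the main obstacle to be the continuity step, specifically the focal-versus-two-geodesics characterization. I would import it from Sakai rather than reprove it.
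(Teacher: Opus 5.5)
Your proposal is correct and follows the paper's route: the paper gives no proof of its own and defers to the classical argument in \cite[Chap.\,III]{Sakai1996}, which is the argument you sketch, including the focal-point versus two-minimizers dichotomy you import. Your reading of the infimum in the definition of $t_{\cut}$ as a supremum is the right fix, and the one step you leave implicit is that $\exp^\perp(U(\ell))$ misses $\cut(\ell)$; this follows from your injectivity and corner argument run with $t\le t_{\cut}(w)$ in place of $t<t_{\cut}(w)$, since $\gamma_w$ still minimizes up to $t_{\cut}(w)$.
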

\subsection{Tubes around Reeb orbits}\label{subsec:Tube_Reeb}
Throughout this subsection, $(M,\alpha,g)$ denotes a compact K-contact Riemannian manifold and $\ell$ a periodic orbit of minimal period $\tau>0$ of the Reeb field $R$. For a fixed $q\in\ell$, let $\gamma:\mathbb R/2\pi\mathbb N\to M$ be the following embedding of $\ell$:
\begin{equation}\label{eq:gamma}
	\gamma:\mathbb R/2\pi\mathbb N\to M,\qquad \gamma(t)=e^{\frac{\tau}{2\pi}tR}(q).
\end{equation}
By the compatibility condition \eqref{eq:comp_metric}, $\nu(\ell)$ coincides with $\xi|_{\ell}$. Hence, fixing an orthonormal frame
\begin{equation}\label{eq:adapted_frame}
	v_1,v_2\in\xi,\qquad g(v_i,v_j)=\delta_{ij},\qquad i,j=1,2,
\end{equation}
for $\xi|_\ell$, we may identify $\nu(\ell)$ with the trivial bundle $\mathbb R^2\times S^1$. We have the following proposition.
\begin{prop}\label{prop:bundle_isomorphism}
The following map is an isomorphism of vector bundles 
\begin{equation}
	v:\mathbb R^2\times S^1\to \nu(\ell),\qquad v(x,y,z)=(xv_1+yv_2)|_{\gamma(z)}.
\end{equation}
\end{prop}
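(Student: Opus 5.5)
The plan is to reduce the statement to two facts: the frame \eqref{eq:adapted_frame} can be chosen as a smooth, globally defined frame along the whole closed curve $\ell$, and a smooth fiberwise-linear map that sends the standard basis to an orthonormal basis is automatically a bundle isomorphism.

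First I will make explicit that $\nu(\ell)=\xi|_\ell$. By \eqref{eq:comp_metric}, $g(R,w)=\alpha(w)$ for every $w$. Hence $w\perp R$ if and only if $w\in\ker\alpha=\xi$. Since $T\ell=\mathrm{span}\{R|_\ell\}$, this gives $\nu(\ell)=(T\ell)^\perp=\xi|_\ell$, a smooth rank-two subbundle of $TM|_\ell$.

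Next I will justify that a smooth orthonormal frame $v_1,v_2$ of $\xi|_\ell$ exists globally along $\ell$. This is the only point that needs an argument: a rank-two bundle over a circle could a priori be the nonorientable (Möbius-type) bundle. Here, however, $\xi$ is oriented by $d\alpha|_\xi$, which is nondegenerate because $\alpha\wedge d\alpha\neq0$. An oriented vector bundle over $S^1$ is trivial, since the clutching map lies in the connected group $GL^+(2,\mathbb R)$. Concretely, I will pull back $\xi|_\ell$ along $\gamma$ from \eqref{eq:gamma}, take a smooth frame on $[0,2\pi]$, and close it up: the monodromy lies in $GL^+(2,\mathbb R)$, so it is joined to the identity by a path, and this path corrects the frame to a $2\pi$-periodic one. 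Applying Gram–Schmidt with respect to $g$ then yields smooth orthonormal sections $v_1,v_2$, which we may also take to be oriented.

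Finally I will verify the isomorphism. The map $v(x,y,z)=(xv_1+yv_2)|_{\gamma(z)}$ is smooth, since $v_1,v_2$ and $\gamma$ are smooth. It covers the diffeomorphism $\gamma\colon\mathbb R/2\pi\mathbb N\to\ell$, which is an embedding because $\tau$ is the minimal period. On each fiber it is linear and bijective onto $\xi_{\gamma(z)}$, because $v_1,v_2$ is a basis there. Its inverse is given explicitly by
\begin{equation}
w\in\nu(\ell)_{\gamma(z)}\;\mapsto\;\bigl(g(w,v_1),\,g(w,v_2),\,\gamma^{-1}(\pi(w))\bigr),
\end{equation}
where $\pi$ is the bundle projection. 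This inverse is manifestly smooth, so $v$ is an isomorphism of vector bundles.

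I expect no real obstacle in this argument beyond the triviality of $\xi|_\ell$, which is settled by the orientability of the contact structure.
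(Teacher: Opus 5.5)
Your proposal is correct and follows the same reasoning the paper relies on: the paper gives no separate proof, deducing $\nu(\ell)=\xi|_\ell$ from the compatibility condition \eqref{eq:comp_metric} and then simply fixing a global orthonormal frame of $\xi|_\ell$. Your argument fills in the step the paper takes for granted, namely that such a global frame exists because $\xi$ is oriented by $d\alpha|_\xi$; closing up the frame smoothly at the gluing point needs only the routine care of choosing the correcting path constant near the endpoints.
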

\begin{remark}\label{rmk:cyl_coords}
	 The map $v:\mathbb R^2\times S^1\to \nu(\ell)$ of the previous proposition also induces cylindrical coordinates $(r,\theta,z)$ on $\nu(\ell)\setminus\ell$ defined by 
	 \begin{equation}
	 	x=r\cos\theta,\qquad y=r\sin\theta, \qquad z=z.
	 \end{equation}
	In particular $(\theta,z)$ are coordinates on the torus $\nu^s(\ell)$, for every $s>0$. 
\end{remark}
We now define the lift of the Reeb vector field to the tangent bundle.
\begin{defi}
We define the vector field $\vec R\in \mathrm{Vec}(TM)$ as 
\begin{equation}
	\vec R_{v}=\left.\frac{d}{dz}\right|_{z=0}e^{zR}_*v,\qquad \forall \, v\in TM.
\end{equation}
\end{defi}
Since the flow of the Reeb vector field acts by isometries and fixes $\ell$, $\vec R$ is tangent to $\nu(\ell)$. We can therefore express $\vec R$ in the cylindrical coordinates $(r,\theta,z)$ of \cref{rmk:cyl_coords}.
\begin{prop}\label{prop:Reeb_coords}
	There exists cylindrical coordinates $(r,\theta,z)$ on $\nu(\ell)\setminus\ell$, such that  
	\begin{equation}\label{eq:vec_R}
		\vec R=\frac{2\pi}{\tau}\left(\partial_z+\phi\partial_\theta\right),
	\end{equation}
where $\phi$ is the rotation number of the Reeb orbit $\ell$ of \cref{def:rotation_num}.
\end{prop}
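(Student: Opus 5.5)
The plan is to choose the orthonormal frame $v_1,v_2$ of $\xi|_\ell$ in \eqref{eq:adapted_frame} adapted to the Reeb flow, and then compute $\vec R$ on the linear bundle $\nu(\ell)$ directly.

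For $t\in\mathbb R$, the map $e^{tR}_*$ sends $\xi_{\gamma(z)}$ isometrically onto $\xi_{\gamma(z+2\pi t/\tau)}$, because the flow preserves both $\alpha$ and $g$. By \cref{rmk:explicit_rotation}, the return map $e^{\tau R}_*$ on $\xi_q$ is the rotation by angle $2\pi\phi$ in any oriented orthonormal basis. Fix an oriented orthonormal basis $w_1,w_2$ of $\xi_q$, oriented by $d\alpha|_\xi$. Define
\begin{equation}
	v_j|_{\gamma(z)}=\rho_{-\phi z}\left(e^{\frac{\tau z}{2\pi}R}_*w_j\right),\qquad z\in[0,2\pi],
\end{equation}
where $\rho_{\beta}$ denotes the rotation by angle $\beta$ inside the oriented Euclidean plane $\xi_{\gamma(z)}$. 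This frame is well defined on $\mathbb R/2\pi\mathbb N$. At $z=2\pi$ we get $\rho_{-2\pi\phi}\circ e^{\tau R}_*(w_j)=w_j$, since $e^{\tau R}_*$ commutes with rotations of the plane. The frame is also smooth, because both the flow and the rotations are smooth in $z$.

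In the coordinates of \cref{prop:bundle_isomorphism}, with this frame, we then check how $e^{tR}_*$ acts. Using that $e^{tR}_*$ is orientation preserving and isometric, and hence commutes with rotations, we get
\begin{equation}
	e^{tR}_*\big(xv_1+yv_2\big)|_{\gamma(z)}=\rho_{\phi\cdot 2\pi t/\tau}\big(xv_1+yv_2\big)|_{\gamma(z+2\pi t/\tau)}.
\end{equation}
To see this, write $xv_1+yv_2=\rho_{-\phi z}\,e^{\frac{\tau z}{2\pi}R}_*(xw_1+yw_2)$ and push it forward by $e^{tR}_*$. This gives $\rho_{-\phi z}\,e^{\frac{\tau(z+2\pi t/\tau)}{2\pi}R}_*(xw_1+yw_2)$, which equals $\rho_{\phi\cdot2\pi t/\tau}$ applied to the frame expression at $z'=z+2\pi t/\tau$. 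In cylindrical coordinates the flow of the lift is therefore
\begin{equation}
	(r,\theta,z)\mapsto\Big(r,\theta+\tfrac{2\pi\phi}{\tau}t,z+\tfrac{2\pi}{\tau}t\Big).
\end{equation}
Differentiating at $t=0$ gives \eqref{eq:vec_R}.

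The main obstacle is the orientation and sign bookkeeping, together with the ambiguity of $\phi$ in \cref{def:rotation_num}. The rotation number is defined only up to sign and integers by the eigenvalues, so I would fix it as the angle of $e^{\tau R}_*$ in the $d\alpha$-oriented frame. The integer ambiguity is harmless: replacing $\phi$ by $\phi+k$ just changes the frame by a full $z$-dependent twist $\rho_{-kz}$, which is still periodic. For that reason the statement is read as asserting the existence of suitable coordinates.
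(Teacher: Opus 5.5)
Your argument is correct and is essentially the paper's: the paper takes an arbitrary orthonormal frame along $\ell$, records the flow as a rotation by an angle $\varphi(t)$, and then shifts $\theta$ by a $z$-dependent amount to absorb the non-uniform rotation, which is the same as replacing that frame by your flow-transported frame untwisted by $\rho_{-\phi z}$. One small point to tidy up is smoothness at the seam $z=0\sim 2\pi$: checking the value at $z=2\pi$ alone is not enough, but your formula makes sense for all $z\in\mathbb R$ and is $2\pi$-periodic, by the same commutation of $e^{\tau R}_*$ with rotations, so the frame descends smoothly to the circle.
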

\begin{proof}
	Let $v_1,v_2$ be an orthonormal frame for $\xi|_\ell$ of the form \eqref{eq:adapted_frame}. Let $\gamma:S^1\to M$ be the parametrization \eqref{eq:gamma} of $\ell$. Since $\alpha$ is K-contact, the map $e^{tR}_*=e^{t\vec R}$ rotates the frame $v_1,v_2$. Therefore there exists a smooth function $\varphi(t):\mathbb R\to\mathbb R$ such that 
	\begin{equation}\label{eq:flow_vecR}
		e^{t\vec R} v_1|_q=(\cos\varphi(t)v_1+\sin\varphi(t) v_2)|_{\gamma(\frac{2\pi}{\tau}t)},\qquad e^{t\vec R} v_2|_q=(-\sin\varphi(t)v_1+\cos\varphi(t) v_2)|_{\gamma(\frac{2\pi}{\tau}t)}.
	\end{equation}
	Observe that, by \cref{def:rotation_num} of $\phi$, the function $\varphi$ satisfies 
	\begin{equation}\label{eq:boundaryvarphi}
		\varphi(0)\equiv 0\mod 2\pi,\qquad \varphi(\tau)\equiv 2\pi\phi \mod 2\pi.
	\end{equation}
	Thanks to \eqref{eq:flow_vecR}, we can compute the vector field $\vec R$ in cylindrical coordinates of \cref{rmk:cyl_coords}
	\begin{equation}
		\begin{aligned}
			e^{t\vec R}v(r\cos\theta,r\sin\theta,z)&=e^{t\vec R}(r\cos\theta v_1+r\sin\theta v_2)|_{\gamma(z)}\\
			&=v\left(r\cos\left(\theta+\varphi(t)\right),r\sin\left(\theta+\varphi(t)\right),z+\frac{2\pi}{\tau}t\right),
		\end{aligned}
	\end{equation}
	where $v$ is the map of \cref{prop:bundle_isomorphism}. Differentiating both sides with respect to $t$ we find 
	\begin{equation}\label{eq:preReeb}
		\vec R=v_*\left(\frac{2\pi}{\tau}\partial_z+\dot\varphi\left(\frac{\tau}{2\pi}z\right)\partial_\theta\right).
	\end{equation}
	Consider the following change of coordinates 
	\begin{equation}\label{eq:change_of_coords}
		(\theta,z)\mapsto (\theta',z'):=\left(\theta+\frac{2\pi}{\tau}\left(z\phi-\varphi\left(\frac{\tau}{2\pi}z\right)\right),z\right).
	\end{equation}
	Note that \eqref{eq:change_of_coords} is is well defined since, by \eqref{eq:boundaryvarphi}, $(\theta'(\theta,2\pi),z'(\theta,2\pi))=(\theta'(\theta,0),z'(\theta,0)+2\pi)$. Applying the change of coordinates \eqref{eq:change_of_coords} to \eqref{eq:preReeb}, we find 
	\begin{equation}
		\vec R=v_*\left(\frac{2\pi}{\tau}\left(\partial_{z'}+\phi\partial_{\theta'}\right)\right),
	\end{equation}
	concluding the proof.
\end{proof}
\begin{remark}
	Let $m\in\mathbb Z$, applying the change of coordinates $(\theta,z)\mapsto (\theta+mz,z)$ to \eqref{eq:vec_R} yields 
	\begin{equation}
		\vec R=\frac{2\pi}{\tau}\left(\partial_z+(\phi+m)\partial_\theta\right).
	\end{equation}
	We see that such transformation has the same effect as changing the representative of $\phi$ mod $1$.
\end{remark}
\begin{remark}\label{rmk:commutativity_E}
	Since the flow of $R$ acts by isometries and fixes $\ell$, the following holds 
	\begin{equation}
		e^{tR}\circ \exp^\perp=\exp^\perp\circ e^{t\vec R},\qquad \forall \,t\in\mathbb R.
	\end{equation}
\end{remark}
\begin{remark}\label{rmk:dense_on_Nasell}
	If $\alpha$ is irregular, then the orbits of $\vec R$ are dense on $\nu^s(\ell)$ for all $s>0$. Indeed, in this case at least one non-periodic Reeb orbit exists, thus by \cref{lem:rot_determines_flow} the rotation number $\phi$ is irrational. Therefore, the explicit expression of $\vec R$ in \cref{prop:Reeb_coords} shows that, for any $s>0$, the orbits of the flow of $\vec R$ are dense on the torus  
	\begin{equation}
		\nu^s(\ell)=\{v\in \nu(\ell)\,:\,|v|=s\}\simeq \{(r\cos\theta,r\sin\theta,z)\in \mathbb R^2\times S^1\,:\,r=s\}.
	\end{equation}
\end{remark}
We now compute the form $(\exp^{\perp*}\alpha)$ in the coordinates of \cref{prop:Reeb_coords}.
\begin{lemma}\label{lem:geodesics_and_form}
Let $\exp^\perp:\nu(\ell)\to M$ be the normal exponential map, then there exists a smooth function $a:\nu(\ell)\to \mathbb R$, such that
	\begin{equation}\label{eq:E*alpha}
		(\exp^{\perp*}\alpha)=\left(\frac{\tau}{2\pi}-\phi a\right)dz+ad\theta.
	\end{equation}
Furthermore, if $\alpha$ is irregular then the following equalities hold on the whole $\nu(\ell)$
\begin{equation}\label{eq:theta_z_invariance}
	\partial_\theta a=\partial_z a=0,\qquad \mathcal L_{\partial_\theta}\left(\exp^{\perp*}g\right)=\mathcal  L_{\partial_z}\left(\exp^{\perp*}g\right)=0.
\end{equation}
\end{lemma}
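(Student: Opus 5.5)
We work on $\nu(\ell)$ with $\tilde\alpha:=\exp^{\perp*}\alpha$ and $\tilde g:=\exp^{\perp*}g$, in the cylindrical coordinates $(r,\theta,z)$ of \cref{prop:Reeb_coords}. The first step is to show that $\tilde\alpha$ has no $dr$ component. For fixed $(\theta,z)$, the curve $r\mapsto\exp^\perp(r(\cos\theta v_1+\sin\theta v_2)|_{\gamma(z)})$ is a geodesic $\sigma$ with $\dot\sigma(0)\in\xi$. Since $R$ is Killing, $g(\dot\sigma,R)$ is constant along $\sigma$. At $r=0$ it vanishes, so by \eqref{eq:comp_metric} we get $\alpha(\dot\sigma)=g(\dot\sigma,R)=0$. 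Hence $\tilde\alpha(\partial_r)=0$ on $\nu(\ell)\setminus\ell$. We can therefore write $\tilde\alpha=b\,dz+a\,d\theta$ for smooth functions $a,b$ on $\nu(\ell)\setminus\ell$.

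Next we use the Reeb field. By \cref{rmk:commutativity_E}, $\exp^\perp_*\vec R=R$, so $\tilde\alpha(\vec R)=1$. By \eqref{eq:vec_R} this reads $\frac{2\pi}{\tau}(b+\phi a)=1$, i.e. $b=\frac{\tau}{2\pi}-\phi a$, which gives \eqref{eq:E*alpha}. For smoothness across $\ell$ we note that $a=\tilde\alpha(\partial_\theta)=\tilde\alpha(x\partial_y-y\partial_x)$, where $x\partial_y-y\partial_x$ is a smooth vector field on $\mathbb R^2\times S^1$. So $a$ extends smoothly and vanishes on $\ell$, and the formula holds on all of $\nu(\ell)$.

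For the irregular case, \cref{rmk:commutativity_E} shows that $\tilde\alpha$ and $\tilde g$ are invariant under the flow of $\vec R$. Then $a=\tilde\alpha(\partial_\theta)$ is $\vec R$-invariant as well, since $[\vec R,\partial_\theta]=0$ by the constant-coefficient form \eqref{eq:vec_R}. By \cref{rmk:dense_on_Nasell}, $\phi$ is irrational and $\vec R$-orbits are dense on each torus $\nu^s(\ell)$. A continuous invariant function is therefore constant on each torus, which gives $\partial_\theta a=\partial_z a=0$.

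For $\tilde g$ we argue with flows. The flow of $\vec R$ is the linear flow $(\theta,z)\mapsto(\theta+\frac{2\pi}{\tau}\phi t,\,z+\frac{2\pi}{\tau}t)$. Since $\phi$ is irrational, the closure of $\{e^{t\vec R}\}$ in the group of diffeomorphisms of $\nu(\ell)$ is the full torus of rotations $(\theta,z)\mapsto(\theta+\theta',z+z')$. These rotations are linear isometries of the fibers preserving $r$, and they act smoothly. The set of diffeomorphisms $\psi$ with $\psi^*\tilde g=\tilde g$ is closed under $C^1$-convergence on compacts, and the approximating flow maps converge in $C^\infty$ because all maps involved are rigid translations in $(\theta,z)$. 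Hence every rotation preserves $\tilde g$, and differentiating gives $\mathcal L_{\partial_\theta}\tilde g=\mathcal L_{\partial_z}\tilde g=0$. Here $\partial_\theta$ is understood as the smooth field $x\partial_y-y\partial_x$, so the identity also holds across $\ell$. The main point needing care is this closure argument, namely that the approximation preserves smooth tensors, together with the smooth extension across the zero section; the rest is routine.
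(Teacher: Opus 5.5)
Your proof is correct and follows the paper's proof in three of its four steps: the vanishing of the $dr$-component via the Killing conservation law, the normalization $\tilde\alpha(\vec R)=1$, and the invariance of $a$. For the last of these, the paper computes $\mathcal L_{\vec R}\tilde\alpha=\vec R(a)(d\theta-\phi\,dz)$ where you use $[\vec R,\partial_\theta]=0$, which amounts to the same thing.

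The genuine difference is the step for $\tilde g$. The paper asserts that, for any tensor, $\mathcal L_{\vec R}T=0\iff\mathcal L_{\partial_\theta}T=\mathcal L_{\partial_z}T=0$, justified only by density of $\vec R$-orbits on the tori $\nu^s(\ell)$. Made precise, this is a componentwise argument: in the coframe $dr,d\theta,dz$ the constant-coefficient field $\vec R$ acts only on the component functions, which are then constant on each torus. You instead pass from the one-parameter group to its closure, the full rotation torus, using that isometries of $\tilde g$ are closed under $C^1$ convergence.

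The paper's route is more elementary but lives in cylindrical coordinates. It therefore only yields the identities on $\nu(\ell)\setminus\ell$ and needs a continuity step to reach the zero section. Yours is coordinate-free for the tensor, applies verbatim to any $\vec R$-invariant tensor, and works on all of $\nu(\ell)$ at once, because the rotations are smooth global bundle maps.

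You also make explicit the smooth extension of $a$ across $\ell$ via $\partial_\theta=x\partial_y-y\partial_x$. The paper leaves this implicit, only noting $a(0)=0$ later in the proof of \cref{lem:smooth_vol}.
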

\begin{proof}
We claim that $\alpha(\exp^\perp_*\partial_r)\equiv 0$. Let $v\in \nu^1\ell$ and consider the geodesic
\begin{equation}
		\sigma:[0,+\infty)\to M,\qquad \sigma(t)=\exp^\perp(tv).
\end{equation} 
Note that $\exp^\perp_*\partial_r=\dot\sigma$. We define $u:[0,+\infty)\to \mathbb R$ as
\begin{equation}
	u(t)=\alpha(\dot\sigma(t)).
\end{equation}
To prove the claim, we need to show that $u\equiv 0$. Let $X$ be a local unit norm vector field extension of $\dot\sigma$. Note that, since $R$ is Killing it holds 
\begin{equation}\label{eq:computation_nabla0}
	\begin{aligned}
		0=\left(\mathcal L_Rg\right)(X,X)=R(g(X,X))-g([R,X],X)-g(X,[R,X])=2g(X,\nabla_XR),
	\end{aligned}
\end{equation}
where $\nabla$ is the Levi-Civita connection. It follows from \eqref{eq:comp_metric} that $u(t)=g(\dot\sigma(t),R)$, therefore 
\begin{equation}	
	\dot u(t)=g(\nabla_{\dot\sigma}\dot\sigma(t),R)+g(X,\nabla_XR)=0,\qquad \forall\, t\geq 0,
\end{equation}
where the first addendum is zero since $\sigma$ is a geodesic, the second one because of \eqref{eq:computation_nabla0}. Since $u(0)=0$, because $\dot\sigma(0)\in \nu(\ell)=\xi|_\ell$, the claim is proved. Therefore it holds 
\begin{equation}\label{eq:E*alpha_proof}
	(\exp^{\perp*}\alpha)=(\exp^{\perp*}\alpha)(\partial_\theta)d\theta+(\exp^{\perp*}\alpha)(\partial_z)dz.
\end{equation}
Furthermore, thanks to \cref{rmk:commutativity_E} and \cref{prop:Reeb_coords} we can compute 
\begin{equation}
	\begin{aligned}
		1=\alpha(R)=\alpha(\exp^\perp_*\vec R)&=(\exp^{\perp*}\alpha)(\vec R)=\frac{2\pi}{\tau}(\exp^{\perp*}\alpha)\left(\partial_z+\phi\partial_\theta\right).
	\end{aligned}
\end{equation}
Setting $a:=(\exp^{\perp*}\alpha)(\partial_\theta)$, the above equation requires that $$(\exp^{\perp*}\alpha)(\partial_z)=\frac{\tau}{2\pi}-\phi a,$$ which substituted in \eqref{eq:E*alpha_proof} confirms the expression of $(\exp^{\perp*}\alpha)$ in the statement. 
\newline\newline\noindent
If $\alpha$ is irregular then by \cref{rmk:dense_on_Nasell} any orbit of $\vec{R}$ is dense on the torus $\nu^s(\ell)$, for all $s>0$. Moreover, according to \cref{rmk:cyl_coords}, $T(\nu^s(\ell))=\mathrm{span}\{\partial_\theta,\partial_z\}$. Therefore, for any smooth function $f:\nu(\ell)\to \mathbb R$ the following equivalence holds 
\begin{equation}
	\vec R(f)=0\iff \partial_\theta f=\partial_z f=0.
\end{equation}
Similarly, for any tensor smooth tensor $T$ defined on $\nu(\ell)$ one has
\begin{equation}\label{eq:tensor_invariance}
	\mathcal L_{\vec R}T=0\iff \mathcal L_{\partial_\theta }T=\mathcal L_{\partial_z}T=0.
\end{equation}
Since $\mathcal L_R g=0$, by \cref{rmk:commutativity_E} it also holds $\mathcal L_{\vec R}(\exp^{\perp*}g)=0$, indeed 
\begin{equation}
	0=\exp^{\perp*}(\mathcal L_R g)=\mathcal L_{\exp^\perp_*R}(\exp^{\perp*}g)=\mathcal L_{\vec R}(\exp^{\perp*}g).
\end{equation}
Therefore by \eqref{eq:tensor_invariance} it holds $$\mathcal L_{\partial_\theta }(\exp^{\perp*}g)=\mathcal L_{\partial_z}(\exp^{\perp*}g)=0.$$ 
Similarly, since $\mathcal L_{R}\alpha=0$ one also has $\mathcal L_{\vec R}(\exp^{\perp*}\alpha)=0$. On the other hand, using the explicit expression \eqref{eq:vec_R} of $\vec R$ and the one \eqref{eq:E*alpha} of $(\exp^{\perp*}\alpha)$  yields 
\begin{equation}
	0=\mathcal L_{\vec R}(\exp^{\perp*}\alpha)=\vec R(a)(d\theta-\phi dz),
\end{equation}
which implies $\vec R(a)=0$ and, by \eqref{eq:tensor_invariance}, $\partial_\theta a=\partial_z a=0$.
\end{proof}
\section{Characterization of irregular K-contact forms}\label{sec:char_K_cont}
The purpose of this section is to prove the following result. 
\begin{theorem}\label{thm:gather_results}
	Let $(M,\alpha)$ be a closed K-contact manifold with $\alpha$ irregular. Then: 
	\begin{itemize}
		\item[(i)] there exists exactly two periodic Reeb orbits $\ell_0$, $\ell_1$ of minimal periods $\tau_0,\tau_1$ with $\tau_0/\tau_1$ irrational,
		\item[(ii)] there exists $p,q\in\mathbb N$ co-prime such that $M$ is diffeomorphic to $L(p,q)$,
		\item[(iii)] let $\phi_0$ be the rotation number of $\ell_0$, then there exists a smooth function $a:[0,1]\to\mathbb R$ such that 
		\begin{equation}
			\psi_0^*\alpha=\left(\frac{\tau_0}{2\pi}-\phi_0 a(r)\right)dz+a(r)d\theta,\qquad r\in [0,1],
		\end{equation}
		where $(U_0,\psi_0)$ is one of the two toroidal charts of \cref{prop:charts}.
	\end{itemize}
\end{theorem}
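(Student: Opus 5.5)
Fix a periodic orbit $\ell_0$, which exists by \cref{thm:one_periodic_orbit}, of minimal period $\tau_0$ and rotation number $\phi_0$. Since $\alpha$ is irregular, \cref{lem:rot_determines_flow} forces $\phi_0\notin\mathbb Q$, so \cref{lem:geodesics_and_form} applies on all of $\nu(\ell_0)$. There $\exp^{\perp*}g$ and $a$ are invariant under $\partial_\theta,\partial_z$, and $\exp^{\perp*}\alpha=(\tau_0/2\pi-\phi_0 a)dz+a\,d\theta$ with $a=a(r)$.

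The first step is to use this $T^2$-symmetry to understand the cut locus. The translations $(\theta,z)\mapsto(\theta+\theta',z+z')$ act on $\nu(\ell_0)$ by $\exp^{\perp*}g$-isometries fixing the zero section. They are the closure of the flow of $\vec R$. By \cref{rmk:commutativity_E}, the closure of the Reeb flow in the compact group $G$ (\cref{rmk:contact_group}) is a torus $T\subset G$ acting on $M$ with $\exp^\perp\circ(\text{translation})=(\text{element of }T)\circ\exp^\perp$. Hence $t_{\cut}$ is $T$-invariant on $\nu^1(\ell_0)$. Since $T$ acts transitively on $\nu^1(\ell_0)\simeq T^2$, $t_{\cut}\equiv c$ is constant.

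Next, the distance function $\delta$ is $T$-invariant. It is a K-contact-type momentum-like function whose level sets $\exp^\perp(\nu^s)$, $0<s<c$, are $T$-orbit tori with dense Reeb orbits. I would now identify $\mathrm{cut}(\ell_0)=\exp^\perp(\nu^c(\ell_0))$. This set is a single $T$-orbit, hence a torus, circle or point. It cannot be a torus or a point: the complement of a torus orbit would be disconnected from the other side, since $M$ is closed, and a point would force $M\cong S^3$ with a fixed point of the Reeb flow, which is impossible. So it is a circle $\ell_1$ tangent to $R$, i.e. a periodic Reeb orbit. The alternative route is to take a $T$-Killing field $Y$ not parallel to $R$, note that $f=\alpha(Y)$ is a momentum map, and apply \cref{thm:momentum_maps}. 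In this route the critical set of $f=f(r)$ consists of $\ell_0$ and the circle at $r=c$, and no other orbit is periodic because the tori $\{r=s\}$ carry dense orbits. So there are exactly two periodic orbits.

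This is the step I expect to be the main obstacle. It requires showing rigorously that the endpoint orbit is one-dimensional and that $M=\exp^\perp(\overline{\nu^{<c}})$ closes up smoothly. I would handle it by running the same construction from $\ell_1$: its normal tube is also $T$-invariant with constant cut time $c'$. The tubes around $\ell_0$ and $\ell_1$ must then match with $c=c'$, since $\delta_{\ell_0}+\delta_{\ell_1}\equiv c$ along the common minimizing geodesics.

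After rescaling $r\mapsto r/c$, the two tubes give toroidal charts $\psi_0,\psi_1$ whose overlap is $U\setminus\ell$. The transition map sends $r\mapsto 1-r$ and is a $T$-equivariant diffeomorphism of $T^2$-orbits, hence affine in $(\theta,z)$ with matrix in $GL_2(\mathbb Z)$. The meridian of $\ell_1$ is not the meridian of $\ell_0$, which rules out $S^1\times S^2$. After normalizing orientations, the matrix has the form in \eqref{eq:psi_def}, so \cref{prop:charts} gives $M\cong L(p,q)$, proving (ii). Item (iii) is then \cref{lem:geodesics_and_form} read in $\psi_0$.

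Finally, for the irrationality in (i), I would write $\vec R$ at $\ell_1$ in the $\psi_1$-coordinates using the transition matrix. The Reeb field equals $\frac{2\pi}{\tau_0}(\partial_z+\phi_0\partial_\theta)$ in chart $0$. Transforming it gives $\frac{2\pi}{\tau_1}(\partial_{z}+\phi_1\partial_\theta)$ in chart $1$, which expresses $\tau_0/\tau_1$ as a Möbius image $(\text{integer})+(\text{integer})\phi_0$ of $\phi_0$ with integer coefficients. Since $\phi_0$ is irrational, so is $\tau_0/\tau_1$.
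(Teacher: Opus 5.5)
Most of your outline works. It repackages the paper's argument around the torus $T=\overline{\{e^{tR}\}}\subset G$:
\begin{itemize}
\item transitivity of $T$ on $\nu^1(\ell_0)$ replaces the paper's density-plus-continuity argument for $t_{\cut}$;
\item $T$-equivariance replaces the bracket computation that gives the affine integral form of the transition map;
\item matching the Reeb field in the two charts replaces the matching of $\alpha$ in \cref{lem:Phi_and_phis}.
\end{itemize}
For the cut locus, rely on your second route: a momentum map $\alpha(Y)$ with $Y\in\mathrm{Lie}(T)$ not parallel to $R$, monotone in $r$ by the contact condition, followed by \cref{thm:momentum_maps} and the density argument. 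This is essentially \cref{lem:smooth_vol}. Your first route's exclusion of a torus orbit is not an argument as written. What is true is the following: near a torus orbit $S$, each level set $\{\delta=s\}$ with $s$ close to $c$ would contain a $T$-orbit on each side of $S$. But each such level set is the single orbit $\exp^\perp(\nu^s(\ell_0))$, a contradiction.

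The real gap is the unproved sentence ``the meridian of $\ell_1$ is not the meridian of $\ell_0$'', i.e.\ $p\neq 0$ in \eqref{eq:psi_def}. Both remaining conclusions depend on it:
\begin{itemize}
\item it is what excludes $S^1\times S^2$ in (ii);
\item your Reeb matching gives $\tau_0/\tau_1=q+p\phi_0$, which equals $q=\pm1$ when $p=0$, so irrationality of $\phi_0$ then tells you nothing.
\end{itemize}
This cannot come from the $T$-action and the cut-locus structure alone. The product $T^2$-action on $S^2\times S^1$ with an invariant metric has the same picture: two circle orbits, each the cut locus of the other, and invariant tori in between. Yet there the meridians coincide. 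So the contact condition must enter.

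The paper obtains $p\neq0$ by matching $\alpha$ in both charts. This yields $\mathrm{vol}(M)=p\tau_0\tau_1$, hence $p>0$, sign included.

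Within your framework, argue as follows. In chart $0$ the meridian field of $\ell_1$ is $p\partial_z-q\partial_\theta$, so its $\alpha$-value is $\frac{p\tau_0}{2\pi}-(q+p\phi_0)a(r)$. This must vanish at $r=c$, because the field vanishes on $\ell_1$. If $p=0$, then $q=\pm1$ and $a(c)=0=a(0)$. That contradicts $a'\neq0$ on $(0,c)$, which holds because $\exp^{\perp*}(\alpha\wedge d\alpha)=\frac{\tau_0}{2\pi}a'(r)\,dr\wedge d\theta\wedge dz$ and $\exp^\perp$ is a diffeomorphism on $\nu^{<c}(\ell_0)$. Once $p\neq0$ is established, your Reeb matching gives $\phi_0=\frac1p\left(\frac{\tau_0}{\tau_1}-q\right)$ and the rest of your argument goes through.
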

The proof is divided into three lemmas, each of which is proved in a separate subsection. Throughout the section $(M,\alpha)$ denotes a smooth compact K-contact manifold. By \cref{thm:one_periodic_orbit} there exists at least one periodic Reeb orbit $\ell\subset M$. Let $\tau>0$ be its minimal period. We fix a K-contact Riemannian metric $g$ compatible with $\alpha$, so that $(M,\alpha,g)$ is a K-contact Riemannian manifold, and referring to the notation of \cref{sec:tubes}, we introduce the normal exponential map of $\ell$
\begin{equation}\label{eq:normal_recall}
	\exp^\perp:\nu(\ell)\to M.
\end{equation}
We denote with $\delta:M\to\mathbb R$ the distance from $\ell$, i.e., for $x\in M$ 
\begin{equation}\label{eq:def_delta}
	\delta(x)=\min_{y\in\ell}d(y,x).
\end{equation}
\subsection{Cut time and non existence of periodic orbits}
In this subsection we compute the cut time (cf. \cref{def:cut_time}) of the exponential map \eqref{eq:normal_recall}. We then use the result of such computation to show that there are no periodic Reeb orbits away from $\ell$ and from the cut locus. In the following lemma, for $s>0$ we denote 
\begin{equation}\label{eq:ball_centred_at_ell}
	B_s(\ell)=\{x\in M\,:\, \delta(x)<s\}.
\end{equation}

	\begin{lemma}\label{lem:constant_t_cut}
		Let  $\rho=\max_{x\in M} \delta(x)$, then the cut time function is constant and identically equal to $\rho$. Furthermore, the Reeb vector field does not have any periodic orbit on $B_\rho(\ell)\setminus \ell$.
	\end{lemma}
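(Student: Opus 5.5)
Note first that the lemma is stated in the irregular setting of this section, so the rotation number $\phi$ of $\ell$ is irrational and \cref{lem:geodesics_and_form} gives the invariance \eqref{eq:theta_z_invariance}. The plan is to show that the cut time is invariant under the flow of $\vec R$ on the unit normal torus $\nu^1(\ell)$, and that this flow has dense orbits there (\cref{rmk:dense_on_Nasell}).

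\emph{Step 1: $t_\cut$ is constant.} For $v\in\nu^1(\ell)$ and $t\in\mathbb R$, \cref{rmk:commutativity_E} gives
\[
\exp^\perp(s\,e^{t\vec R}v)=e^{tR}\exp^\perp(sv).
\]
The Reeb flow is an isometry preserving $\ell$, so $\delta\circ e^{tR}=\delta$. Hence $\delta(\exp^\perp(s\,e^{t\vec R}v))=s$ if and only if $\delta(\exp^\perp(sv))=s$, and therefore $t_\cut(e^{t\vec R}v)=t_\cut(v)$. By \cref{thm:cut_locus} the function $t_\cut$ is continuous. It is also constant on each $\vec R$-orbit, and every such orbit is dense in $\nu^1(\ell)$. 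So $t_\cut\equiv c$ for some constant $c>0$.

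\emph{Step 2: $c=\rho$.} The set $M\setminus\cut(\ell)$ equals $\exp^\perp(U(\ell))$, and every point of it satisfies $\delta<c$. Every $x\in M$ is joined to $\ell$ by a minimizing geodesic of the form $t\mapsto\exp^\perp(tv)$ with $t\le\delta(x)$ (\cref{rmk:geodesics_orthognal}). Along such a geodesic $\delta(\exp^\perp(tv))=t$, so $\delta(x)\le t_\cut(v)=c$. This gives $\rho\le c$. For the reverse inequality, take any $v\in\nu^1(\ell)$. The definition of $t_\cut$ (as the boundary of the interval where the geodesic minimizes, using continuity) gives $\delta(\exp^\perp(cv))=c$, hence $c\le\rho$.

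\emph{Step 3: no periodic orbits in $B_\rho(\ell)\setminus\ell$.} Since $t_\cut\equiv\rho$, Step 2 shows that $B_\rho(\ell)=\exp^\perp(\nu^{<\rho}(\ell))=\exp^\perp(U(\ell))$, and on this set $\exp^\perp$ is a diffeomorphism by \cref{thm:cut_locus}. By \cref{rmk:commutativity_E}, it conjugates the flow of $\vec R$ on $\nu^{<\rho}(\ell)$ to the Reeb flow on $B_\rho(\ell)$. In the coordinates of \cref{prop:Reeb_coords},
\[
\vec R=\tfrac{2\pi}{\tau}(\partial_z+\phi\,\partial_\theta)
\]
with $\phi$ irrational. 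For $r>0$ this is a linear flow on the torus $\{r=\mathrm{const}\}$ with irrational slope, so none of its orbits is closed. Consequently no Reeb orbit in $B_\rho(\ell)\setminus\ell$ is periodic.

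The main obstacle is Step 2, specifically handling the definition of $t_\cut$ with an infimum. One must check that $\delta(\exp^\perp(tv))=t$ holds exactly for $t\le t_\cut(v)$, that is, the standard fact that geodesics minimize to $\ell$ up to the cut time and then stop minimizing. I would take this from \cite{Sakai1996}, together with \cref{thm:cut_locus}, rather than reprove it.
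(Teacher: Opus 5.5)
Your proposal is correct and follows the paper's proof. Invariance of $t_\cut$ under $\vec R$, together with density of $\vec R$-orbits and continuity, gives constancy; minimizing normal geodesics identify the constant with $\rho$; and the diffeomorphism $\exp^\perp\colon\nu^{<\rho}(\ell)\to B_\rho(\ell)$ intertwining $\vec R$ and $R$ transfers the absence of closed orbits of the irrational linear flow. Your Step 2 is a more careful version of the paper's one-line identification $t_\cut\equiv\rho$, and you rightly read $t_\cut$ as the supremum of minimizing times, since the infimum in \cref{def:cut_time} is evidently a slip.
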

	\begin{proof}
		The flow of the Reeb field $R$ acts by isometries and fixes $\ell$. Therefore the distance from $\ell$ function, which is $\delta:M\to \mathbb R$, is invariant under the flow of $R$. The latter fact, combined with \cref{rmk:commutativity_E}, implies that $t_\cut:\nu^1(\ell)\to \mathbb R$ is constant along the orbits of $\vec R$, indeed 
		\begin{equation}
			\begin{aligned}
				t_\cut(e^{t\vec R}v)&=\inf\{r>0\,:\, \delta(\exp^\perp\circ e^{t\vec R}(rv))=r\}=\inf\{r>0\,:\, \delta(e^{tR}\circ \exp^\perp(rv))=r\}\\
				&=\inf\{r>0\,:\, \delta(\exp^\perp(rv))=r\}=t_\cut(v),
			\end{aligned}
		\end{equation}
		for any $v\in \nu^1(\ell)$ and $t\in\mathbb R$. Since, by \cref{rmk:dense_on_Nasell}, every orbit of $\vec R$ is dense in $\nu^1(\ell)$ and $t_\cut$ is continuous (cf. \cref{thm:cut_locus}), we deduce that $t_\cut$ is constant. By definition of $t_{\cut}$ and \cref{rmk:geodesics_orthognal}, the maximal value of $t_\cut$ equals the maximal value of $\delta$, therefore $t_\cut\equiv \rho$. Thus by \cref{thm:cut_locus} the map 
		\begin{equation}\label{eq:expperp_rho}
			\exp^\perp|_{\nu^{<\rho}(\ell)}:\nu^{<\rho}(\ell)\to B_\rho(\ell)
		\end{equation}
		is a diffeomorphism. Hence, given $x\in B_\rho(\ell)\setminus \ell$ there exists a unique $v\in \nu^{<\rho}(\ell)\setminus\ell$ such that $x=\exp^\perp(v).$ Let $t\in\mathbb R$ such that $e^{tR}(x)=x$. We show that $t=0$. By \cref{rmk:commutativity_E} we have 
		\begin{equation}
			\exp^\perp\circ e^{t\vec R}(v)=e^{tR}\circ \exp^\perp(v)=e^{tR}(x)=x=\exp^\perp(v).
		\end{equation}
		Both $v$ and $e^{t\vec R}(v)$ belong to $\nu^{<\rho}(\ell)\setminus \ell$, where $\ell\subset \nu(\ell)$ is identified with the zero section. By injectivity of \eqref{eq:expperp_rho} we deduce $v=e^{t\vec R}(v)$. 
		However \cref{rmk:dense_on_Nasell} implies that $\vec R$ has no periodic orbit on $\nu(\ell)\setminus \ell$, therefore $t=0$. 
	\end{proof}
	\subsection{Volume function and cut locus}\label{ssec:vol_fun}
	We prove that the following function
	\begin{equation}\label{eq:def_mu}
		\mu:M\to \mathbb R,\qquad \mu(x)=\mathrm{vol}\left(B_{\delta(x)}(\ell)\right)=\int_{B_{\delta(x)}(\ell)}\alpha\wedge d\alpha,
	\end{equation}
	where $\delta(x)$ is defined in \eqref{eq:def_delta} and $B_{r}(\ell)$ in \eqref{eq:ball_centred_at_ell}, is a smooth K-contact momentum map. We then combine the latter fact with \cref{thm:momentum_maps} to show that $\cut(\ell)$ is a periodic Reeb orbit. 

	In the following, for $r>0$ we denote $S_r(\ell)=\{x\in M\,:\, \delta(x)=r\}$.
	\begin{lemma}\label{lem:smooth_vol}
		The function \eqref{eq:def_mu} is a smooth K-contact momentum map, moreover
		\begin{equation}\label{eq:E*mu=a}
			\mu\circ \exp^\perp=2\pi\tau(\exp^{\perp*}\alpha)(\partial_\theta),\quad \text{on $\nu^{<\rho}(\ell)$.}
		\end{equation}
	Furthermore, the cut locus $\cut(\ell)$ is a periodic Reeb orbit. In particular $M$ contains exactly two periodic Reeb orbits, namely $\ell$ and $\cut(\ell)$. 
	\end{lemma}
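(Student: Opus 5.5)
We work throughout in the cylindrical coordinates $(r,\theta,z)$ of \cref{prop:Reeb_coords} on $\nu^{<\rho}(\ell)$, where $\exp^\perp$ is a diffeomorphism onto $B_\rho(\ell)$ by \cref{lem:constant_t_cut}. By \cref{lem:geodesics_and_form}, since $\alpha$ is irregular, we have $\exp^{\perp*}\alpha=(\frac{\tau}{2\pi}-\phi a)dz+a\,d\theta$ with $a=a(r)$ depending on $r$ only. Moreover $a(0)=0$, since $a=(\exp^{\perp*}\alpha)(\partial_\theta)$ and $\partial_\theta$ vanishes on the zero section.

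\textbf{Step 1: the momentum identity inside $B_\rho(\ell)$.} A direct computation gives $\exp^{\perp*}(\alpha\wedge d\alpha)=\frac{\tau}{2\pi}a'(r)\,dr\wedge d\theta\wedge dz$ up to sign/orientation. Since $\delta\circ\exp^\perp=r$ on $\nu^{<\rho}(\ell)$, integrating over $\{r<s\}$ with $(\theta,z)\in(\mathbb R/2\pi\mathbb Z)^2$ gives
\[
\mu\circ\exp^\perp=\tfrac{\tau}{2\pi}(2\pi)^2\,a(r)=2\pi\tau\,a(r),
\]
which is \eqref{eq:E*mu=a}.

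\textbf{Step 2: $\mu$ is a momentum map.} Take the vector field $Y:=\exp^\perp_*(2\pi\tau\,\partial_\theta)$ on $B_\rho(\ell)$. Then $\iota_{\partial_\theta}d(\exp^{\perp*}\alpha)=-a'\,dr$, so $d(\mu\circ\exp^\perp)+\iota_{2\pi\tau\partial_\theta}d\exp^{\perp*}\alpha=0$. By \eqref{eq:theta_z_invariance}, the flow of $\partial_\theta$ preserves both $\exp^{\perp*}g$ and $\exp^{\perp*}\alpha$, so $Y$ is K-contact on $B_\rho(\ell)$.

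To globalize, pick $s\in(0,\rho)$. The torus $S_s(\ell)$ is Reeb invariant and, by \cref{lem:constant_t_cut}, contains no periodic orbits. \cref{lem:G_is_T} and \cref{rmk:Killing_extension} then give a global K-contact field $\tilde Y$ with $\tilde Y|_{S_s}=Y|_{S_s}$. Both $\tilde Y$ and $Y$ are Killing fields agreeing on a hypersurface, and their normal derivatives also match, because the isometry groups are determined on $S_s$ as in the injectivity part of \cref{lem:G_is_T}. Hence $\tilde Y=Y$ on $B_\rho(\ell)$. Let $\tilde\mu:=\alpha(\tilde Y)$, which is smooth and, as in \cref{rmk:deformation}, is a momentum map. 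On $B_\rho(\ell)$ it satisfies $\tilde\mu=\mu+c$, and $c=0$ because $\mu$ and $\alpha(Y)=2\pi\tau a$ both vanish on $\ell$.

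Off $B_\rho(\ell)$ we have $\delta=\rho$, so $\mu\equiv\mathrm{vol}(M)$ there, given that $\cut(\ell)$ has measure zero. By continuity, $\tilde\mu=\mu$ on all of $M$, and therefore $\mu$ is smooth.

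\textbf{Step 3: the cut locus is a periodic orbit.} $\mu$ is non-constant, so by \cref{thm:momentum_maps} its critical set is a union of periodic Reeb orbits. On $B_\rho(\ell)\setminus\ell$ there are no periodic orbits (\cref{lem:constant_t_cut}), so $a'\neq0$ there. Hence $\mu$ attains its maximum only on $\cut(\ell)=\{\delta=\rho\}$, and every point of $\cut(\ell)$ is critical. Thus $\cut(\ell)$ is a nonempty union of periodic orbits.

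It is connected: it is the image of the continuous map $\nu^1(\ell)\to M$, $v\mapsto\exp^\perp(\rho v)$, and $\nu^1(\ell)$ is a connected torus. A connected union of disjoint periodic orbits that is closed, namely the preimage of a critical value, contains no path joining two distinct orbits. To make this rigorous, use that $\cut(\ell)=\mu^{-1}(\max\mu)$ and that, by the proof of \cref{thm:momentum_maps}, each critical orbit is an orbit $H\cdot q$; the critical set of a momentum map of a torus action is a finite union of orbits in dimension three. Hence $\cut(\ell)$ is a single periodic orbit.

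Finally, $M=B_\rho(\ell)\sqcup\cut(\ell)$, so there are exactly two periodic orbits, $\ell$ and $\cut(\ell)$.

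\textbf{Main obstacle.} The expected main obstacle is the smoothness of $\mu$ across $\cut(\ell)$, that is, the extension argument identifying $Y$ with a global K-contact field. The finiteness and connectedness argument for the critical set is a secondary delicate point.
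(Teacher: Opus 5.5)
Your Steps 1 and 2 follow the paper's argument. You extend $\exp^\perp_*\partial_\theta$ from a Reeb-invariant torus $S_s(\ell)$ to a global K-contact field via \cref{lem:G_is_T}, compare $\alpha$ of the extended field with $\mu$ on $B_\rho(\ell)$ by integrating $\frac{\tau}{2\pi}a'\,dr\wedge d\theta\wedge dz$, and pass to the null set $\cut(\ell)$ by continuity. You also note that two Killing fields agreeing on a hypersurface agree on the connected set $B_\rho(\ell)$, by skew-symmetry of their covariant derivatives. This justifies a step the paper uses tacitly: the extension equals $\exp^\perp_*\partial_\theta$ on all of $B_\rho(\ell)$, not only on $S_s(\ell)$.

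The gap is in Step 3, in passing from ``$\cut(\ell)$ is a connected union of periodic orbits'' to ``$\cut(\ell)$ is one orbit''. Your first justification is not a valid principle. Connectedness and closedness do not stop a union of disjoint circles from being, say, a torus foliated by closed orbits, where distinct orbits are joined by paths. So everything rests on your claim that the critical set of $\mu$ is a \emph{finite} union of orbits, which you only assert. It is true here, but it needs proof, for instance via the slice theorem for the effective isometric action of $H\cong T^2$:
\begin{itemize}
\item there are no fixed points, since $H$ contains the Reeb flow;
\item take $q$ with $\dim H_q=1$; if $H_q^0$ acted trivially on the two-dimensional slice at $q$, then, $H$ being abelian, $H_q^0$ would fix a neighbourhood of $H\cdot q$ pointwise, hence be trivial as a group of isometries;
\item so $H_q^0$ rotates the slice nontrivially, nearby orbits are two-dimensional, and one-dimensional orbits are isolated;
\item compactness then gives finiteness.
\end{itemize}

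The paper avoids any finiteness statement with a direct density argument that uses only tools you already invoke. Let $\zeta(v)=\exp^\perp(\rho v)$ for $v\in\nu^1(\ell)$; this is a continuous surjection onto $\cut(\ell)$. By \cref{rmk:commutativity_E} and linearity of $e^{t\vec R}$ on fibres, the Reeb orbit through $\zeta(v)$ is $\zeta(\tilde\ell)$, where $\tilde\ell$ is the $\vec R$-orbit of $v$. By \cref{rmk:dense_on_Nasell}, $\tilde\ell$ is dense in $\nu^1(\ell)$, so this Reeb orbit is dense in $\cut(\ell)$. It lies in the critical set of $\mu$, so it is periodic by \cref{thm:momentum_maps}, hence closed, and therefore equals $\cut(\ell)$. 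With either fix in place, the rest of your proposal goes through, including the count of exactly two periodic orbits from $M=B_\rho(\ell)\sqcup\cut(\ell)$.
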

\begin{proof}
	 Since $t_\cut\equiv \rho$, by \cref{thm:cut_locus} for any $0<r<\rho$ the map
	\begin{equation}\label{eq:isometry_of_tori}
		\exp^\perp|_{\nu^{r}(\ell)}:\nu^{r}(\ell)\to S_r(\ell),
	\end{equation}
is a diffeomorphism and induces an isometry of Riemannian manifolds, the metrics being \[(\exp^{\perp*}g)|_{\nu^r(\ell)}\quad \text{and}\quad g|_{S_r(\ell)},\] on the domain and on target, respectively.
Since \eqref{eq:isometry_of_tori} is an isometry, and since by \cref{lem:geodesics_and_form} $\partial_\theta$ is a Killing vector field for $\nu^r(\ell)$, then $\exp^\perp_*\partial_\theta$ is a Killing field for $S_r(\ell)$. Note that $S_r(\ell)$ is an embedded surface invariant under the Reeb flow and, by \cref{lem:constant_t_cut}, it contains no periodic Reeb orbit. Therefore, by \cref{rmk:Killing_extension}, $\exp^\perp_*\partial_\theta$ extends to a global contact Killing vector field on $M$. We denote with $X\in\mathrm{Vec}(M)$ the extension of $\exp^\perp_*\partial_\theta$ and define the smooth map
\begin{equation}
	f:M\to\mathbb R,\qquad f(x)=\alpha(X)|_{x}.
\end{equation}
Note that, since $X$ is a K-contact field, then $f$ is a K-contact momentum map, indeed 
\begin{equation}
	df=d(\alpha(X))=\mathcal L_X\alpha-\iota_Xd\alpha=-\iota_X d\alpha.
\end{equation}
According to \cref{lem:geodesics_and_form}, there exists a smooth function $a:\nu(\ell)\to\mathbb R$, such that 
\begin{equation}\label{eq:expression_of_alpha2}
	(\exp^{\perp*}\alpha)=\left(\frac{\tau}{2\pi}-\phi a\right)dz+ad\theta.
\end{equation}
Since $X=\exp^\perp_*\partial_\theta$ and $a=(\exp^{\perp*}\alpha)(\partial_\theta)$, we deduce that 
\begin{equation}\label{eq:f_and_h}
	f\circ \exp^\perp=a\quad \text{on $\nu^{<\rho}(\ell)$}.
\end{equation}
By \cref{lem:geodesics_and_form} the function $a$ is radial, therefore \eqref{eq:f_and_h} rewrites in cylindrical coordinates as
\begin{equation}
	f\circ \exp^\perp(r\cos\theta, r\sin\theta,z)=a(r),\qquad 0<r<\rho.
\end{equation}
Note that, since $\partial_\theta|_{r=0}=(x\partial_y-y\partial_x)|_{r=0}=0$, it holds $$a(0)=0.$$
Let $x\in M$ such that $\delta(x)<\rho$. Then $x\in \exp^\perp(\nu^{<\rho}(\ell))$, hence we can write $$x=\exp^\perp(\delta(x)\cos\theta,\delta(x)\sin\theta,z)$$ for some $\theta,z\in \mathbb R/2\pi\mathbb N$. Exploiting the fact that $\exp^\perp:\nu^{<\delta(x)}(\ell)\to B_{\delta(x)}(\ell)$ is a diffeomorphism and \eqref{eq:expression_of_alpha2}, we may compute 
\begin{equation}
	\begin{aligned}
		\mu(x)&=\int_{B_{\delta(x)}(\ell)}\alpha\wedge d\alpha=\int_{\nu^{<\delta(x)}(\ell)}(\exp^{\perp*}\alpha)(\alpha\wedge d\alpha)=\int_{0}^{2\pi}\int_{0}^{2\pi}\int_0^{\delta(x)}\frac{\tau}{2\pi}\partial_r a(r)drd\theta dz\\
		&=2\pi\tau a(\delta(x))=2\pi\tau 	f(\exp^\perp(\delta(x)\cos\theta,\delta(x)\sin\theta,z))=2\pi\tau f(x).
	\end{aligned}
\end{equation} 
We have obtained the following equality 
\begin{equation}\label{eq:mu_and_f}
	\mu=2\pi\tau f,\qquad \text{on $B_{\rho}(\ell)\subset M$}.
\end{equation}
Since $\rho=\max_{x\in M}\delta(x)$, the set $B_{\rho}(\ell)$ is dense in $M$. We now observe that $\mu$ is continuous. To see this, we write $\mu=\mathcal V\circ \delta$, where 
\begin{equation}
	\mathcal V:[0,+\infty)\to \mathbb R,\qquad \mathcal V(t)=\int_{\{\delta<t\}}\mathrm{vol}_g.
\end{equation}
Since $\delta$ is clearly continuous, we need to show that $\mathcal V$ is continuous. Note that 
\begin{equation}
	\lim_{\ve\to 0}|\mathcal V(t+\ve)-\mathcal V(t)|=\int_{\{\delta=t\}}\mathrm{vol}_g.
\end{equation}
By \cref{rmk:geodesics_orthognal} the following inclusion holds 
\begin{equation}
	\{\delta=t\}\subset\exp^\perp(\nu^t(\ell)).
\end{equation}
We see that $\{\delta=t\}$ is contained in the image, through the smooth map $\exp^\perp$, of the measure zero set $\nu^t(\ell)\subset \nu(\ell)$. Therefore $\{\delta=t\}$ is a negligible set and $\mathcal V$ is continuous. Thus by \eqref{eq:mu_and_f}, $\mu$ and $2\pi\tau f$ are two continuous functions coinciding on a dense subset of $M$. Therefore they coincide on the whole $M$.  Since $f$ is a smooth K-contact momentum map, the same holds for $\mu$. The validity of \eqref{eq:E*mu=a} is deduced from \eqref{eq:f_and_h}.

 Recall that, by \cref{lem:constant_t_cut}, $\cut(\ell)=S_\rho(\ell)$. Note that any point in $S_\rho(\ell)$ is necessarily a critical point of $\mu$, because $$\mu(x)\leq \mu|_{S_\rho(\ell)}\equiv \mathrm{vol}(M),\qquad \forall\,x\in M,$$ since $\rho=\max_{x\in M}\delta(x)$. Therefore, by \cref{thm:momentum_maps}, $\cut(\ell)$ is a union of periodic Reeb orbits. We now show that $\cut(\ell)$ is actually a single one. Consider the map 
\begin{equation}
	\zeta:\nu^\rho(\ell)\to \cut(\ell),\qquad \zeta=\exp^\perp|_{\nu^{\rho}(\ell)}.
\end{equation}
Note that $\zeta$ is a continuous surjective map. Fix $v\in \nu^{\rho}(\ell)$, let $\tilde \ell_1$ be the orbit of $\vec R$ containing $v$, and let $\ell_1$ be the orbit of the Reeb field $R$ containing $\zeta(v)$. By \cref{rmk:commutativity_E}, $\ell_1=\zeta(\tilde \ell_1)$. Moreover, $\tilde \ell_1$ is dense in $\nu^{\rho}(\ell)$, by \cref{rmk:dense_on_Nasell}. Since $\zeta$ is continuous and surjective, $\ell_1$ is dense in $\cut(\ell)$. But $\ell_1$ is closed, since $\ell_1\subset \cut(\ell)$ which is contained in the critical set of $\mu$ which by \cref{thm:momentum_maps} is a union of periodic Reeb orbits. Therefore $\ell_1=\cut(\ell)$.
\end{proof}
\begin{remark}\label{rmk:mu_01}
	Let $\ell_0=\ell$ and $\ell_1=\cut(\ell)$. Note that the following equalities hold 
	\begin{equation}
		\mu|_{\ell_0}=0,\qquad \mu|_{\ell_1}=\mathrm{vol}(M).
	\end{equation}
 Thus the critical set of $\mu$ contains the union of $\ell_0$ and $\ell_1$: $\mu$ achieves its minimum value at $\ell_0$ and its maximum at $\ell_1$. Moreover, since $\ell_0,\ell_1$ are the only two periodic Reeb orbits, by \cref{thm:momentum_maps} $\mu$ has no critical point on $M\setminus \ell_0\cup\ell_1$.
\end{remark}
	\subsection{Rotation numbers and toroidal charts}
	We denote with $\ell_0,\ell_1\subset M$ the two periodic Reeb orbits of \cref{lem:smooth_vol}, in particular $\ell_0=\ell$ and $\ell_1=\cut(\ell_0)$. For $i=0,1$ we denote with 
	\begin{equation}
		\exp^\perp_i:\nu(\ell_i)\to M,
	\end{equation}
	the corresponding normal exponential maps. By \cref{lem:constant_t_cut} and \cref{lem:smooth_vol} it holds
	\begin{equation}
		\cut(\ell_0)=\ell_1,\qquad \cut(\ell_1)=\ell_0,\qquad t_\cut(\ell_0)=t_\cut(\ell_1)=\rho.
	\end{equation}
	Therefore by \cref{thm:cut_locus} the restrictions of the normal exponential maps yields diffeomorphisms 
	\begin{equation}
			\exp^\perp_i:\nu^{<\rho}(\ell_i)\to B_\rho(\ell_i),\qquad i=0,1.
	\end{equation}
	In this subsection we compute, in suitable coordinates, the transition map 
	\begin{equation}
		(\exp^\perp_0)\circ (\exp^\perp_1)^{-1}:B_\rho(\ell_0)\cap B_\rho(\ell_1)\to B_\rho(\ell_0)\cap B_\rho(\ell_1).
	\end{equation}
	In the process we also compute the rotation numbers $\phi_0,\phi_1$ of the Reeb orbits $\ell_0,\ell_1$.
	\begin{lemma}\label{lem:Phi_and_phis}
		There exists a diffeomorphism $\Phi:\nu^1(\ell_1)\to \nu^1(\ell_0)$ which satisfies
		\begin{equation}\label{eq:Phi_statement}
			\exp^\perp_0((\rho-r)\Phi(v))=\exp^\perp_1(rv),\qquad \forall\,\,v\in \nu^1(\ell_1),\quad \forall\,\,r\in[0,\rho].
		\end{equation} 
		Moreover, in coordinates $(\theta,z)$ of \cref{rmk:cyl_coords}, $\Phi$ can be written as
		\begin{equation}
			\Phi(\theta,z)=(-q\theta+mz,p\theta+sz),
		\end{equation}
		where $p,q,m,s\in\mathbb Z$ satisfy $pm+qs=1$. Furthermore, it holds 
		\begin{equation}\label{eq:vol_an_phi}
			\mathrm{vol}(M)=p\tau_0\tau_1,\qquad \phi_0=\frac{1}{p}\left(\frac{\tau_0}{\tau_1}-q\right),\qquad \phi_1=\frac{1}{p}\left(\frac{\tau_1}{\tau_0}-s\right),
		\end{equation} 
		where $\phi_0,\phi_1$ are the rotation numbers of $\ell_0,\ell_1$. In particular $p$ is positive.
	\end{lemma}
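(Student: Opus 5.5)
Define $\Phi$ by following geodesics from $\ell_1$ all the way to $\ell_0$. Fix $v\in\nu^1(\ell_1)$ and consider the geodesic $\sigma(r)=\exp^\perp_1(rv)$, $r\in[0,\rho]$. Since $t_{\cut}\equiv\rho$ (\cref{lem:constant_t_cut}), $\sigma$ minimizes the distance to $\ell_1$ up to time $\rho$, and $\sigma(\rho)\in\cut(\ell_1)=\ell_0$. Because $\delta_0+\delta_1\ge d(\ell_0,\ell_1)=\rho$, and equality holds along $\sigma$, the reversed curve $r\mapsto\sigma(\rho-r)$ is a unit-speed minimizing geodesic from $\ell_0$. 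By \cref{rmk:geodesics_orthognal} its initial velocity lies in $\nu^1(\ell_0)$; call it $\Phi(v):=-\dot\sigma(\rho)$. This gives \eqref{eq:Phi_statement}. The inverse is built symmetrically, and smoothness follows from smoothness of the geodesic flow, so $\Phi$ is a diffeomorphism. By \cref{rmk:commutativity_E}, $\Phi$ intertwines the flows of $\vec R_1$ and $\vec R_0$ on the unit normal tori.

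Next I would pin down the coordinate form of $\Phi$. Set $\Psi_i(r,\theta,z)=\exp^\perp_i$ in the cylindrical coordinates of \cref{prop:Reeb_coords}. The transition map on the overlap is $(r,\theta,z)\mapsto(\rho-r,\Phi(\theta,z))$. By \cref{lem:geodesics_and_form}, $\partial_\theta$ and $\partial_z$ are Killing on every tube torus in both charts, and they generate the torus $G_S\simeq T^2$ of \cref{lem:G_is_T}, which is canonically identified with the identity isometry component of each level torus. Since $\Phi$ conjugates this $T^2$-action, it is affine with a linear part in $GL_2(\mathbb Z)$. After rotating the base points of the coordinates, $\Phi$ is linear, $\Phi(\theta,z)=(-q\theta+mz,p\theta+sz)$. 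Orientation, since the combined map with $r\mapsto\rho-r$ must preserve $\alpha\wedge d\alpha$, forces determinant $-1$, i.e. $-qs-mp=-1$.

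For the formulas \eqref{eq:vol_an_phi}, compare $\alpha$ in the two charts. \cref{lem:geodesics_and_form} gives
\begin{equation}
\Psi_i^*\alpha=\Bigl(\frac{\tau_i}{2\pi}-\phi_i a_i(r)\Bigr)dz+a_i(r)d\theta .
\end{equation}
By \eqref{eq:E*mu=a}, $a_0(r)=\mu/(2\pi\tau_0)$ and $a_1(r)=\mu_1/(2\pi\tau_1)$, where $\mu_1=\mathrm{vol}(M)-\mu$. Pull back $\Psi_0^*\alpha$ by the transition map and match the coefficients of $d\theta$ and $dz$ with $\Psi_1^*\alpha$, as affine functions of $\mu$. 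This gives four linear identities in $\mathrm{vol}(M),\tau_i,\phi_i,p,q,m,s$:
\begin{itemize}
\item the $d\theta$ coefficient gives $a_1=-q a_0+p(\tfrac{\tau_0}{2\pi}-\phi_0a_0)$;
\item the $dz$ coefficient gives an analogous identity involving $m,s$.
\end{itemize}
Evaluating at $\mu=0$, where $a_1(\rho)=\mathrm{vol}(M)/(2\pi\tau_1)$, yields $\mathrm{vol}(M)=p\tau_0\tau_1$. Comparing the slopes in $\mu$ gives $\phi_0=(\tau_0/\tau_1-q)/p$, and the $dz$ equations give $\phi_1$. Since $\mathrm{vol}(M)>0$, we get $p>0$.

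The main obstacle is the second step: rigorously showing that $\Phi$ is linear in the adapted coordinates. There are two issues to handle.
\begin{itemize}
\item The coordinates of \cref{prop:Reeb_coords} were constructed independently on each side, so one must first check that the two $T^2$-actions coincide as subgroups of the contact isometry group.
\item The translation part of the affine map must be removable by the choice of frames.
\end{itemize}
I would handle both by viewing the coordinate vector fields $\partial_\theta,\partial_z$ of each chart as global K-contact fields via \cref{rmk:Killing_extension}. These generate the same compact torus $H=G_S$, and both coordinate systems give bases of the integer lattice $\ker(\exp\colon\mathfrak h\to H)$, so the change of basis is integral and unimodular.
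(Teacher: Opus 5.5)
Your proposal is correct. The construction of $\Phi$ by reversing minimizing geodesics is the same as the paper's, and so is the coefficient comparison. The genuine difference is how you show that $\Phi$ is affine with integral, unimodular linear part.

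\textbf{The paper's route (dynamical).} From $\Phi_*\vec R_1=\vec R_0$ and $[\vec R_1,\partial_\theta]=[\vec R_1,\partial_z]=0$, the paper gets that $\Phi_*\partial_\theta$ and $\Phi_*\partial_z$ commute with $\vec R_0=\frac{2\pi}{\tau_0}(\partial_z+\phi_0\partial_\theta)$. Their coefficients are therefore $\vec R_0$-invariant, hence constant by density of $\vec R_0$-orbits. The $2\pi$-periodicity of the pushed-forward flows makes the coefficients integers, and a homology argument plus orientation fixes the determinant to $-1$.

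\textbf{Your route (geometric).} On a level torus, both coordinate pairs are $2\pi$-periodic commuting Killing fields of the flat metric. Their $2\pi$-multiples are therefore $\mathbb Z$-bases of the kernel of the exponential map of $\mathrm{Isom}_0$, and $\Phi$ intertwines them.

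\textbf{Comparison.} The paper's argument is more elementary: it needs only the normal form of $\vec R_i$ and the irrationality of $\phi_0$. Yours makes integrality and unimodularity immediate. It also handles a point the paper skips. The paper passes from ``determined up to a translation'' straight to the linear formula, whereas you remove the translation explicitly by a constant rotation of the frame and a change of base point. Both of these preserve the normal forms of $\vec R_0$ and $\exp_0^{\perp*}\alpha$.

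\textbf{A simplification.} The obstacle you flag is lighter than you fear. Relation \eqref{eq:Phi_statement} at a single $r\in(0,\rho)$ already determines $\Phi$. So you can work on the one torus $S=S_r(\ell_0)=S_{\rho-r}(\ell_1)$, where both translation groups are simply $\mathrm{Isom}_0(S,g|_S)$. Then $\Phi$ is an isometry of flat tori in which $(\theta,z)$ are affine coordinates, since Killing fields on a flat torus are parallel. Neither \cref{rmk:Killing_extension} nor a comparison of the groups $G_S$ for different $S$ is needed.

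\textbf{The final computation.} Your slope comparison uses $\mu_1=\mathrm{vol}(M)-\mu_0$. This follows from $\delta_0+\delta_1=\rho$, and you should state it. The paper instead uses only the endpoint values $a_i(0)=0$ and $a_i(\rho)=\mathrm{vol}(M)/(2\pi\tau_i)$, taking the limits $r\to 0$ and $r\to\rho$. The two versions are equivalent.
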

	\begin{proof}
		We define $\Phi:\nu^1(\ell_1)\to \nu^1(\ell_0)$ as 
		\begin{equation}
			\Phi(v)=-\left.\frac{d}{dr}\right|_{r=\rho}\exp^\perp_1(rv),\qquad v\in \nu^1(\ell_1).
		\end{equation}
		Note that $\Phi$ is well defined since for any $v\in \nu^1(\ell_0)$ the curve $[0,\rho]\ni t\mapsto \exp^\perp_0(tv)$ is an arc-length parametrized geodesic whose length equals the distance between $\ell_0$ and $\ell_1$, therefore its velocity at the endpoints has norm $1$ and is orthogonal to $\ell_0$ and $\ell_1$ respectively (cf. \cref{rmk:geodesics_orthognal}). Moreover $\Phi$ is a diffeomorphism, its inverse being
		\begin{equation}
			\Phi^{-1}:\nu^1(\ell_0)\to \nu^1(\ell_1),\qquad \Phi^{-1}(v)=-\left.\frac{d}{dr}\right|_{r=\rho}\exp^\perp_0(rv).
		\end{equation}
		We claim that for any $v\in \nu^1(\ell_0)$ and $r\in[0,\rho]$ it holds
		\begin{equation}\label{eq:what_Phi_is_needed_for}
			\exp^\perp_0(r\Phi(v))=\exp^\perp_1((\rho-r)v).
		\end{equation}
		Indeed, consider the curve $\sigma:[0,\rho]\to M$, defined by $$\sigma(r)=\exp^\perp_1((\rho-r)v).$$ Such curve is a minimizing geodesic joining $\ell_0$ to $\ell_1$ whose initial velocity is 
		\begin{equation}
			\dot\sigma(0)=\left.\frac{d}{dr}\right|_{r=0}\exp^\perp_1((\rho-r)v)=-\left.\frac{d}{dr}\right|_{r=\rho}\exp^\perp_1(rv)=\Phi(v).
		\end{equation}
		It follows that $\sigma(r)=\exp^\perp_0(r\Phi(v))$. We make the identification $\nu^1(\ell_i)=\partial D^2\times S^1$ and compute $\Phi$ in cylindrical coordinate of \cref{rmk:cyl_coords}. 
		From \cref{rmk:commutativity_E} and \eqref{eq:what_Phi_is_needed_for} we deduce that
		\begin{equation}
			\begin{aligned}
				\Phi(e^{t\vec R_1}v)=e^{t\vec R_0}\Phi(v),\qquad t\in\mathbb R, v\in \nu^1(\ell_1),
			\end{aligned}
		\end{equation}
		where $\vec R_i$ denotes the restriction of $\vec R$ to $\nu(\ell_i)$. Therefore we have $\Phi_*\vec R_1=\vec R_0$. Let $p,q:\nu^1(\ell_0)\to \mathbb R$ be two smooth functions such that 
		\begin{equation}\label{eq:diff_Phi_theta}
			\Phi_*\partial_\theta=p\partial_z-q\partial_\theta.
		\end{equation}
	  	From the explicit expression of $\vec R_i$ found in \cref{prop:Reeb_coords} we obtain $[\vec R_0,\partial_\theta]=0$. Therefore
		\begin{equation}
			0=\Phi_*[\vec R_1,\partial_\theta]=[\Phi_*\vec R_1,\Phi_*\partial_\theta]=[\vec R_0,p\partial_z-q\partial_\theta]=\vec R_0(p)\partial_z-\vec R_0(q)\partial_\theta.
		\end{equation}
		Since the orbits of $\vec R_0$ are dense on $\nu^1(\ell_1)$ (cf. \cref{rmk:dense_on_Nasell}), we deduce that $p,q$ are constant. Furthermore, since $\Phi$ is a diffeomorphism, any orbit of $\Phi_*\partial_\theta$ must be periodic of period $2\pi$, therefore $p$ and $q$ are coprime integers. Similarly, we deduce the existence of $m,s\in\mathbb Z$ such that 
		\begin{equation}\label{eq:diff_Phi_z}
			\Phi_*\partial_z=s\partial_z+m\partial_\theta.
		\end{equation}
		The integral curves of $\Phi_*\partial_\theta,\Phi_*\partial z$ must be generators of $H_2(\nu^1(\ell_0))=\mathbb Z^2$, thus by \eqref{eq:diff_Phi_theta},\eqref{eq:diff_Phi_z}
		\begin{equation}
			\pm 1=\det\begin{pmatrix}
				-q & m \\
				p & s	
			\end{pmatrix}=-mp-sq.	
		\end{equation}
		 Moreover, since for $i=0,1,$ $\exp_i^{\perp}$ is an orientation preserving diffeomorphism onto its image when restricted to $\nu^{<\rho}(\ell_i)$, \eqref{eq:what_Phi_is_needed_for} implies that $\Phi$ is orientation reversing, therefore 
		\begin{equation}\label{eq:generator_homology2}
			-mp-sq=-1.
		\end{equation}
		The two equation \eqref{eq:diff_Phi_theta} and \eqref{eq:diff_Phi_z} uniquely determine the map $\Phi$ up to a translation, in particular
		\begin{equation}
			\Phi(\theta,z)=(-q\theta+mz,p\theta+sz).
		\end{equation}
		%\begin{equation}\label{eq:diff_Phi_z}
		%	\Phi_*\partial_z=\left(\frac{\tau_1}{\tau_0}-\phi_1p\right)\partial_z+\left(\frac{\tau_1}{\tau_0}\phi_0+q\phi_1\right)\partial_\theta
		%\end{equation} 
		Note that $\Phi$ can be extended to a smooth map $\Phi:\nu(\ell_1)\setminus\ell_1\to\nu(\ell_0)\setminus\ell_0$ setting
		\begin{equation}
			\Phi(rv)=r\Phi(v),\qquad \forall\,v\in \nu^1(\ell_1), r>0.
		\end{equation}
		With such extension intended, \eqref{eq:what_Phi_is_needed_for} implies that 
		\begin{equation}
			\Phi^*(\exp_0^{\perp*}\alpha)|_{\nu^r(\ell_0)}=(\exp_1^{\perp*}\alpha)|_{\nu^{\rho-r}(\ell_1)},\qquad r\in(0,\rho).
		\end{equation}
		Substituting the expressions of $(\exp^\perp_i)^*\alpha$ found in \cref{lem:geodesics_and_form} in the above equality we obtain 
		\begin{equation}\label{eq:Phi*alpha}
			\Phi^*\left(\left(\frac{\tau_0}{2\pi}-\phi_0a_0(r)\right)dz+a_0(r)d\theta\right)=\left(\frac{\tau_1}{2\pi}-\phi_1a_1(\rho-r)\right)dz+a_1(\rho-r)d\theta.
		\end{equation}
		On the other hand from \eqref{eq:diff_Phi_theta} and \eqref{eq:diff_Phi_z} we deduce that
		\begin{equation}\label{eq:Phi_pullback}
			\Phi^*dz=sdz+pd\theta,\qquad \Phi^*d\theta=-qd\theta+mdz.
		\end{equation}
		Substituting the latter relations in \eqref{eq:Phi*alpha}, we deduce that, for $r\in (0,\rho)$ it holds
		\begin{equation}\label{eq:a_0_a_1_forms}
			\frac{s\tau_0}{2\pi}+a_0(r)(m-s\phi_0)=\frac{\tau_1}{2\pi}-\phi_1a_1(\rho-r),\qquad \frac{p\tau_0}{2\pi}-a_0(r)(q+p\phi_0)=a_1(\rho-r).
		\end{equation}
	     Since $a_i=(\exp_i^\perp)^*\alpha(\partial_\theta)$, according to \cref{lem:smooth_vol} it holds $2\pi\tau_ia_i=\mu_i\circ\exp^{\perp}_i$, where $\mu_i$ is the volume function of \cref{ssec:vol_fun} associated to the Reeb orbit $\ell_i$, $i=0,1$. Then, by \cref{rmk:mu_01}  
	     \begin{equation}\label{eq:values_ai}
	     	a_i(0)=0,\qquad a_i(\rho)=\frac{\mathrm{vol}(M)}{2\pi\tau_i},\qquad i=0,1.
	     \end{equation}
      Thus, taking the limit as $r\to 0$ of \eqref{eq:a_0_a_1_forms}, thanks to \eqref{eq:values_ai} we find
      \begin{equation}
      		\frac{s\tau_0}{2\pi}=\frac{\tau_1}{2\pi}-\phi_1\frac{\mathrm{vol}(M)}{2\pi\tau_1},\qquad \frac{p\tau_0}{2\pi}=\frac{\mathrm{vol}(M)}{2\pi\tau_1}.
      \end{equation}
   Solving the second equation for $p$ and then the first one for $\phi_1$ yields
  	\begin{equation}
  		p=\frac{\mathrm{vol}(M)}{\tau_1\tau_0},\qquad \phi_1=\frac{1}{p}\left(\frac{\tau_1}{\tau_0}-s\right).
  	\end{equation}
  	Similarly, taking the limit as $r\to \rho$ of the second equation of \eqref{eq:a_0_a_1_forms} and substituting \eqref{eq:values_ai} we obtain
  	\begin{equation}
  		\frac{p\tau_0}{2\pi}-\frac{\mathrm{vol}(M)}{2\pi\tau_0}(q+p\phi_0)=0.
  	\end{equation}
   Substituting $\mathrm{vol}(M)=p\tau_0\tau_1$ in the latter equation and solving for $\phi_0$ yields
	\begin{equation}
		\phi_0=\frac{1}{p}\left(\frac{\tau_0}{\tau_1}-q\right).
	\end{equation}
	\end{proof}

\begin{remark}\label{rmk:polar_inverse}
	For $0<r<\rho$ we can write \eqref{eq:Phi_statement} in cylindrical coordinates $(r,\theta,z)$ of \cref{rmk:cyl_coords}
	\begin{equation}
		\exp^\perp_0(\rho-r,-q\theta+mz,p\theta+sz)=\exp^\perp_1(r,\theta,z).
	\end{equation}
\end{remark}
\begin{remark}
	The formula for the volume found in \cref{lem:Phi_and_phis} coincides with the one, proved in larger generality, in \cite{2orbits23}.
\end{remark}
\subsection{Proof of \cref{thm:gather_results}} We combine the three lemmas and prove \cref{thm:gather_results}.

	$(i)$ By \cref{lem:smooth_vol} there are exactly two periodic Reeb orbits, $\ell_0$ and $\ell_1$. Let $\tau_0,\tau_1$ be their minimal periods and $\phi_0,\phi_1$ be their rotation number. By \cref{lem:Phi_and_phis} we have 
	\begin{equation}
		\phi_0=\frac{1}{p}\left(\frac{\tau_0}{\tau_1}-q\right),\qquad \phi_1=\frac{1}{p}\left(\frac{\tau_1}{\tau_0}-s\right).
	\end{equation}
	By \cref{lem:rot_determines_flow} the rotation numbers must be irrational, therefore $\tau_0/\tau_1$ is irrational.\newline\newline\noindent
$(ii)$ By \cref{lem:constant_t_cut} and \cref{lem:smooth_vol} $\ell_0,\ell_1$ are cut loci of each others.  Let $\exp^\perp_i:\nu(\ell_i)\to M$, $i=0,1$, be the corresponding normal exponential maps. By \cref{lem:constant_t_cut} 
\begin{equation}
	\rho=t_{\cut}(\ell_i),\qquad i=0,1.
\end{equation}
Let us denote $U=D^2\times S^1$ and $U_i=B_\rho(\ell_i)$ for $i=0,1$. We define the maps 
\begin{equation}\label{eq:def_psi_i}
	\psi_i:U\to U_i,\qquad \psi_i(r\cos\theta,r\sin\theta,z)=\exp^\perp_i(\rho r\cos\theta,\rho r\sin\theta,z).
\end{equation}
By \cref{thm:cut_locus} the maps $\psi_i$ are diffeomorphisms, and since $\cut(\ell_0)=\ell_1$ and vice versa
\begin{equation}\label{eq:decomposition}
	M=U_0\cup\ell_1=U_1\cup \ell_0.
\end{equation}
In particular $M=U_0\cup U_1$. Note that \eqref{eq:decomposition} implies 
\begin{equation}\label{eq:intersection0}
	\begin{aligned}
		U_0\cap U_1=(M\setminus\ell_1)\cap(M\setminus\ell_0)&=(M\setminus \ell_1)\setminus\ell_0=U_0\setminus \ell_0\\
		&=(M\setminus \ell_0)\setminus\ell_1=U_1\setminus \ell_1.
	\end{aligned}
\end{equation}
Let $\ell=\{0\}\times S^1\subset U$, then, since $\ell_i=\psi_i(\ell)$, equality \eqref{eq:intersection0} can also be written as 
\begin{equation}\label{eq:intersection}
	\psi_0(U\setminus\ell)=\psi_1(U\setminus\ell)=U_0\cap U_1.
\end{equation}
We compute the transition map
\begin{equation}
	\psi:=\psi_0^{-1}\circ\psi_1 :U\setminus\ell\to U\setminus \ell.
\end{equation}
Using the definition of the $\psi_i$'s and \cref{rmk:polar_inverse} yields the following expression of $\psi$ 
\begin{equation}\label{eq:trasnistion_psi}
	\psi(r,\theta,z)=(1-r,-q\theta+mz,p\theta+sz).
\end{equation}
It follows from \eqref{eq:decomposition} that $\{(U_0,\psi_0), (U_1,\psi_1)\}$ is a toroidal atlas for $M$. Moreover, by \eqref{eq:intersection} and \eqref{eq:trasnistion_psi}, such atlas satisfies the hypothesis of \cref{prop:charts}. Thus $M$ is diffeomorphic to $L(p,q)$. 
\newline\newline\noindent
$(iii)$ By \cref{lem:geodesics_and_form} there exists a smooth function $\tilde a:[0,+\infty)\to\mathbb R$ such that 
\begin{equation}
	(\exp^\perp_0)^*\alpha=\left(\frac{\tau_0}{2\pi}-\phi_0\tilde a\left(r\right)\right)dz+\tilde a\left(r\right)d\theta.
\end{equation}
By definition \eqref{eq:def_psi_i} of $\psi_0$ we then have 
\begin{equation}
	\psi_0^*\alpha=	\left(\frac{\tau_0}{2\pi}-\phi_0\tilde a\left(\rho r\right)\right)dz+\tilde a\left(\rho r\right)d\theta,\qquad \forall\,r\in [0,1].
\end{equation}
 The proof can be concluded setting $a(r)=\tilde a(\rho r)$.\qed
\section{K-contact forms on lens spaces}\label{sec:K-cont-Lens}
In this section we construct K-contact forms on $L(p,q)$ and derive necessary and sufficient conditions for them to be contactomorphic. We fix two positive co-prime integers $p,q\in\mathbb Z$, $p>0$. Motivated by point $(iii)$ of \cref{thm:gather_results}, we give the following definition.
\begin{defi}\label{def:triple}
	Let $a:[0,1]\to \mathbb R$ be a smooth function, $\tau_0\in\mathbb R^{+}$ and $\phi_0\in\mathbb R$. A smooth differential form $\alpha$ on $L(p,q)$ is called associated with the triple $(a,\tau_0,\phi_0)$ if 
	\begin{equation}\label{eq:in_def_triple}
		\psi^*_0\alpha=\left(\frac{\tau_0}{2\pi}-\phi_0a(r)\right)dz+a(r)d\theta,
	\end{equation}
	where $(U_0,\psi_0)$ is the toroidal chart of \cref{prop:charts}.
\end{defi}
Combining the latter definition with point $(iii)$ of \cref{thm:gather_results} yields the following corollary. 
\begin{corollary}\label{cor:triple}
	Let $\alpha$ be an irregular K-contact form on $L(p,q)$ having a Reeb orbit of minimal period $\tau_0$ and rotation number $\phi_0$, then $\alpha$ is associated to the triple $(a,\tau_0,\phi_0)$ for some $a\in C^\infty([0,1])$.
\end{corollary}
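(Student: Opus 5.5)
The plan is to apply \cref{thm:gather_results} to $(L(p,q),\alpha)$ and then move the resulting chart onto the fixed model chart $(U_0,\psi_0)$ of $L(p,q)$. Call the given Reeb orbit of minimal period $\tau_0$ and rotation number $\phi_0$ simply $\ell$. Run the construction of \cref{sec:char_K_cont} with the base orbit chosen to be $\ell$, so that $\ell_0=\ell$ and $\ell_1=\cut(\ell)$. This choice is allowed, since \cref{lem:constant_t_cut}, \cref{lem:smooth_vol} and \cref{lem:Phi_and_phis} hold for any periodic orbit. Point $(iii)$ of \cref{thm:gather_results} then gives a toroidal atlas $\{(U_0',\psi_0'),(U_1',\psi_1')\}$, with $\psi_i'$ defined by rescaled normal exponential maps as in \eqref{eq:def_psi_i}, and a function $a\in C^\infty([0,1])$ such that
\begin{equation}
	(\psi_0')^*\alpha=\left(\frac{\tau_0}{2\pi}-\phi_0 a(r)\right)dz+a(r)\,d\theta .
\end{equation}
The transition map $\psi_0'^{-1}\circ\psi_1'$ has the form \eqref{eq:psi_def} for integers $(p',q',m',s')$ coming from \cref{lem:Phi_and_phis}.

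Next I identify this atlas with the model atlas. The proof of \cref{prop:charts} gives an explicit diffeomorphism $F\colon L(p',q')\to L(p,q)$ with $F|_{V_0}=\psi_0'|_V$. By construction the model chart $\psi_0$ of $L(p',q')$ satisfies $F\circ\psi_0=\psi_0'$ on $V$, and by uniqueness of the toroidal description this extends to all of $U$. Hence $\psi_0^*(F^*\alpha)=(\psi_0')^*\alpha$, which is exactly \eqref{eq:in_def_triple}. Moreover $F^*\alpha$ is strictly contactomorphic to $\alpha$, so it is still an irregular K-contact form, and its orbit $F^{-1}(\ell)$ has the same minimal period $\tau_0$ and the same rotation number $\phi_0$, since rotation numbers are invariant under strict contactomorphism. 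The corollary should therefore be read as holding after this identification, i.e. up to strict contactomorphism, which is how it is used in \cref{thm:classification}.

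The main obstacle is checking that the integers produced by \cref{lem:Phi_and_phis} are $(p,q)$ themselves, so that $F$ lands in the lens space we started with and the model chart is the fixed $\psi_0$ of \cref{prop:charts}. Here I would use three facts:
\begin{itemize}
	\item $p'=p$, because $p'$ is the order of $H_1$ of the manifold;
	\item the orientation conventions agree, since $\psi_i'$ are orientation preserving for the orientation $\alpha\wedge d\alpha$ and $\alpha$ is positive;
	\item the classification of oriented lens spaces then gives $q'\equiv q^{\pm1}\bmod p$.
\end{itemize}
Changing the integers $m,s$ alters the transition only by a Dehn twist $(\theta,z)\mapsto(\theta+kz,z)$ in the chart of $\ell_0$. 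By the remark following \cref{prop:Reeb_coords}, such a twist just shifts the representative of $\phi_0$ modulo $1$, so the form \eqref{eq:in_def_triple} is preserved. The case $q'\equiv q^{-1}$ corresponds to exchanging the roles of the two solid tori, which would put $\ell$ in the chart $\psi_1$. I expect this is exactly the source of the dichotomy $q^2\equiv1$ versus $q^2\not\equiv1 \bmod p$ in \cref{thm:classification}. For this corollary I would handle it by allowing the identifying diffeomorphism $F$ to be composed with the orientation-preserving diffeomorphism $L(p,q)\cong L(p,q^{-1})$ that swaps $V_0$ and $V_1$.
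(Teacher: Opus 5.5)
Your reduction is exactly the paper's proof. The paper combines \cref{def:triple} with point $(iii)$ of \cref{thm:gather_results} and tacitly identifies the exponential chart there with the model chart of the given $L(p,q)$, i.e.\ it assumes that the integers produced by \cref{lem:Phi_and_phis} are the given $(p,q)$. You are right that this needs an argument, and your treatment of the case $q'\equiv q\bmod p$ works, up to a mislabeling. Replacing $(m,s)$ by $(m+kq,s-kp)$ with $q$ fixed precomposes the transition map with a twist. So it is a twist in the chart of $\ell_1$ and leaves $\psi_0^*\alpha$ untouched. The twist in the chart of $\ell_0$ is instead the one replacing $q$ by $q+kp$ (and $m$ by $m-ks$), and it shifts the real representative $\phi_0$ by $-k$.

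The genuine gap is the case $q'\equiv q^{-1}\not\equiv q\bmod p$. Composing $F$ with the swap $L(p,q^{-1})\cong L(p,q)$ puts $\ell$ at the core of $\psi_1$, as you observe yourself. So $\psi_0^*$ of the transported form is the normal form around $\cut(\ell)$. It is associated with a triple $(b,\tau_1,\phi_1)$, where $\tau_1,\phi_1$ are the period and rotation number of $\cut(\ell)$, and not with $(a,\tau_0,\phi_0)$.

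Nor can this be repaired for the given orbit. Suppose a positive contact form on $L(p,q)$ is associated with $(a,\tau_0,\phi)$. By \cref{lem:suff_cond} and \cref{lem:a_0a_1}, the core of $\psi_0$ is a closed Reeb orbit of period $\tau_0$ and rotation number $\phi$, and the core of $\psi_1$ is one of period $\tau_0/(q+p\phi)$. Periods and rotation numbers are strict-contactomorphism invariants and $\tau_0\neq\tau_1$. Hence such a form can be strictly contactomorphic to $\alpha$ only if $\phi=\frac1p\bigl(\frac{\tau_0}{\tau_1}-q\bigr)$ and $\phi\equiv\phi_0\equiv\frac1p\bigl(\frac{\tau_0}{\tau_1}-q'\bigr)\bmod 1$, the last congruence being \cref{lem:Phi_and_phis} run from $\ell$. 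This forces $q'\equiv q\bmod p$. For instance, when $q^2\not\equiv 1\bmod p$, take the form $\alpha(\tau_1,\tau_0)$ from the proof of \cref{thm:classification} together with its orbit $\bar\ell_{\tau_0}$. It is not strictly contactomorphic to any form associated with a triple $(a,\tau_0,\phi)$, so the statement fails for that orbit.

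The correct fix is to choose the base orbit rather than to swap the solid tori. If $\ell$ produces $q'\equiv q^{-1}$, run the construction from $\cut(\ell)$ instead. Its transition map is the inverse one, with the roles of $q'$ and $s'$ exchanged, so by \cref{lem:Phi_and_phis} its integer is $\equiv s'\equiv q\bmod p$. With this change your argument proves the following: up to strict contactomorphism, $\alpha$ is associated with the triple of one of its two closed orbits, and with that of either orbit when $q^2\equiv 1\bmod p$. This weaker statement is exactly what the proof of \cref{thm:classification} uses, since there the labels $\ell_0,\ell_1$ may be chosen freely.
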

\subsection{Existence of forms associated to given triples}
In the following three lemmas we derive necessary and sufficient conditions for a smooth differential form $\alpha$ associated to a triple $(a,\tau_0,\phi_0)$ to exists on $L(p,q)$. As in \cref{sec:lens}, we denote $$U=D^2\times S^1\setminus \partial D^2\times S^1,\qquad \ell=\{0\}\times S^1\subset U.$$ 
\begin{lemma}\label{lem:smooth_alpha}
	Let $f,g:[0,1]\to \mathbb R$ be two smooth functions. Let $\alpha$ be a smooth differential form on $U\setminus\ell$ which in cylindrical coordinates reads 
	\begin{equation}
		\alpha=f(r)dz+g(r)d\theta.
	\end{equation}
	Then $\alpha$ extends to a smooth differential form on the whole $U$ if and only if
	\begin{equation}
		g(0)=g^{(2k+1)}(0)=f^{(2k+1)}(0)=0,\qquad \forall k\,\in\mathbb N.
	\end{equation}
\end{lemma}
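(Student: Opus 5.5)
The plan is to pass from cylindrical to Cartesian coordinates $(x,y,z)$ on $U$, where $x=r\cos\theta$, $y=r\sin\theta$. There, smoothness of $\alpha$ across $\ell$ simply means smoothness of its coefficient functions. Since
\begin{equation}
d\theta=\frac{x\,dy-y\,dx}{x^2+y^2},
\end{equation}
on $U\setminus\ell$ we can write
\begin{equation}
\alpha=f\bigl(\sqrt{x^2+y^2}\bigr)\,dz+\frac{g\bigl(\sqrt{x^2+y^2}\bigr)}{x^2+y^2}\,\bigl(x\,dy-y\,dx\bigr).
\end{equation}
Hence $\alpha$ extends smoothly to $U$ if and only if the three functions $F(x,y)=f(r)$, $A(x,y)=-y\,g(r)/r^2$ and $B(x,y)=x\,g(r)/r^2$ extend smoothly across $r=0$. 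All derivatives of $f,g$ at $0$ are understood as one-sided derivatives, which makes sense because $f,g\in C^\infty([0,1])$.

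The key tool is the classical lemma of Whitney. A smooth function $h:(-1,1)\to\mathbb R$ is even if and only if $h(t)=k(t^2)$ for some smooth $k$. A consequence is that a function $f\in C^\infty([0,1])$ has vanishing odd derivatives at $0$ if and only if its even extension $\tilde f(t)=f(|t|)$ is smooth. To see why, note that such an $f$ has the same jet at $0$ as an even smooth function, and the difference between the two is flat at $0$, so its even extension is smooth. I expect this step, namely invoking or justifying Whitney's lemma, to be the only genuine analytic input. Everything else is bookkeeping.

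\emph{Sufficiency.} Assume the odd derivatives of $f$ and $g$ vanish and $g(0)=0$. Then $f(r)=k_1(r^2)$ with $k_1$ smooth, so $F=k_1(x^2+y^2)$ is smooth. Similarly $g(r)=k_2(r^2)$ with $k_2(0)=g(0)=0$. Hadamard's lemma gives $k_2(u)=u\,k_3(u)$ with $k_3$ smooth, so $g(r)/r^2=k_3(r^2)$. It follows that $A=-y\,k_3(x^2+y^2)$ and $B=x\,k_3(x^2+y^2)$ are smooth on $U$.

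\emph{Necessity.} Assume $F$, $A$ and $B$ extend smoothly, and restrict them to the line $y=0$, $z$ fixed.

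For $f$: the function $t\mapsto F(t,0)=f(|t|)$ is smooth on $(-1,1)$. It is the even extension of $f$, so all odd derivatives of $f$ vanish at $0$.

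For $g$: the function $G(t):=B(t,0)=t\,g(|t|)/t^2=g(|t|)/t$ is smooth, and it is odd. Hence $t\,G(t)=g(|t|)$ is smooth and even. This shows that the even extension of $g$ is smooth, so all odd derivatives of $g$ vanish at $0$. Moreover $g(0)=\lim_{t\to0}t\,G(t)=0$.

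Together these give exactly the stated conditions $g(0)=g^{(2k+1)}(0)=f^{(2k+1)}(0)=0$ for all $k\in\mathbb N$.
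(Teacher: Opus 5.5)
Your proof is correct and follows the same route as the paper. Both rewrite $\alpha$ in Cartesian coordinates via $d\theta=(x\,dy-y\,dx)/(x^2+y^2)$ and reduce the claim to smoothness of the coefficient functions, using the characterization of smooth radial functions. The only difference is that you justify the ``standard facts'' the paper simply quotes: Whitney's even-function lemma together with Borel, Hadamard's lemma to write $g(r)=r^2k_3(r^2)$, and restriction to the line $y=0$ for necessity. This makes your version more complete.
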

\begin{proof}
We switch to coordinates $(x,y,z)=(r\cos\theta,r\sin\theta,z)$, which are defined near $r=0$, i.e., near $\ell$. The form rewrites as 
\begin{equation}
	\alpha=f(r)dz+\frac{g(r)}{r^2}(xdy-ydx).
\end{equation}
Thus a necessary and sufficient condition for $\alpha$ to smoothly extend to $r=0$ is that the functions 
\begin{equation}
	F(x,y):=f(\sqrt{x^2+y^2}),\qquad G_1(x,y):=\frac{x g(\sqrt{x^2+y^2})}{x^2+y^2},\qquad G_2(x,y):=\frac{y g(\sqrt{x^2+y^2})}{x^2+y^2},
\end{equation}
are smooth near $x=y=0$. It is a standard fact that a radial function $F(x,y)=f(\sqrt{x^2+y^2})$ is smooth at $(0,0)$ if and only if $f$ is smooth at $0$ and all odd derivatives vanish: $f^{(2k+1)}(0)=0$ for all $k$.
Moreover $G_1,G_2$ are smooth at $(0,0)$ if and only if $g(r)=r^2h(r)$ with $h$ smooth and even at $0$, i.e. $g(0)=0$ and $g^{(2k+1)}(0)=0$ for all $k$.
Hence $\alpha$ extends smoothly across $\ell$ if and only if the stated conditions on $f$ and $g$ hold.
\end{proof}
\begin{lemma}\label{lem:suff_cond}
	Let $a:[0,1]\to \mathbb R$ be a smooth function, $\tau_0>0$ and $\phi_0>-\frac{q}{p}$. A smooth form $\alpha\in\Omega^1(L(p,q))$ associated with the triple $(a,\tau_0,\phi_0)$ exists if and only if the following hold
	\begin{equation}\label{eq:smooth_cond}
		a(0)=a(1)-\frac{p\tau_0}{2\pi(p\phi_0+q)}=0,\qquad a^{(2k+1)}(0)=a^{(2k+1)}(1)=0,\qquad \forall \,\,k\in\mathbb N,
	\end{equation}
	where $a^{(j)}(r)$ denotes the $j$-th derivative of $a$ evaluated at $r\in[0,1]$. 
\end{lemma}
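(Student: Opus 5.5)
Since $L(p,q)=U_0\cup U_1$ with $U_0\cap U_1=\psi_0(U\setminus\ell)=\psi_1(U\setminus\ell)$, and $L(p,q)\setminus U_1=\psi_0(\ell)$, \eqref{eq:in_def_triple} prescribes $\alpha$ on all of $U_0$. A smooth form associated with $(a,\tau_0,\phi_0)$ therefore exists if and only if two things hold. First, the expression \eqref{eq:in_def_triple}, given on $U\setminus\ell$, extends smoothly across $\ell$. Second, the pull-back $(\psi_0^{-1}\circ\psi_1)^*$ of \eqref{eq:in_def_triple} to $U\setminus\ell$, which is $\psi_1^*\alpha$, extends smoothly across $\ell$, since $\psi_1(\ell)=L(p,q)\setminus U_0$. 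The plan is to reduce both conditions to \cref{lem:smooth_alpha}.

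\emph{Condition at $\ell_0$.} Apply \cref{lem:smooth_alpha} with $f(r)=\frac{\tau_0}{2\pi}-\phi_0a(r)$ and $g(r)=a(r)$. The requirement $f^{(2k+1)}(0)=-\phi_0a^{(2k+1)}(0)=0$ is implied by $a^{(2k+1)}(0)=0$, which is already demanded for $g$. So this part is equivalent to $a(0)=0$ and $a^{(2k+1)}(0)=0$ for all $k$.

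\emph{Condition at $\ell_1$.} Compute $\psi^*(\psi_0^*\alpha)$ with $\psi(r,\theta,z)=(1-r,-q\theta+mz,p\theta+sz)$ from \eqref{eq:psi_def}. Using $\psi^*dz=p\,d\theta+s\,dz$ and $\psi^*d\theta=-q\,d\theta+m\,dz$, we get
\begin{equation}
\psi_1^*\alpha=F(r)dz+G(r)d\theta,
\end{equation}
where, writing $b(r)=a(1-r)$,
\begin{equation}
F(r)=\frac{s\tau_0}{2\pi}+b(r)(m-s\phi_0),\qquad G(r)=\frac{p\tau_0}{2\pi}-b(r)(p\phi_0+q).
\end{equation}
Apply \cref{lem:smooth_alpha} to $F,G$. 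The condition $G(0)=0$ reads $a(1)=\frac{p\tau_0}{2\pi(p\phi_0+q)}$, and this division is legitimate exactly because $p\phi_0+q>0$ under the hypothesis $\phi_0>-q/p$. Given that $p\phi_0+q\neq0$, the condition $G^{(2k+1)}(0)=0$ is equivalent to $b^{(2k+1)}(0)=0$, i.e. $a^{(2k+1)}(1)=0$, since $b^{(j)}(0)=(-1)^ja^{(j)}(1)$. Finally, $F^{(2k+1)}(0)=(m-s\phi_0)b^{(2k+1)}(0)$ then vanishes automatically.

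Combining the two parts gives exactly \eqref{eq:smooth_cond}. For sufficiency, define $\alpha$ on $U_0$ by \eqref{eq:in_def_triple} and on $U_1$ by the smooth extension of $F\,dz+G\,d\theta$. The two definitions agree on the overlap by construction, so they glue to a global smooth form.

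The main obstacle is bookkeeping rather than conceptual: one must get the signs in the pull-back right, handle the reflection $r\mapsto1-r$ so that odd derivatives at $0$ correspond to odd derivatives at $1$, and remember that the odd-derivative condition on $F$ is redundant only once $p\phi_0+q\neq0$ is known.
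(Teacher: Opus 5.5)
Your proposal is correct and follows essentially the same route as the paper. Both reduce smoothness along each core circle to \cref{lem:smooth_alpha}, compute $\psi_1^*\alpha$ by pulling back through the transition map \eqref{eq:psi_def} (your $F,G$ coincide with the paper's expression), and glue the two chart expressions for the converse. You also spell out two points the paper leaves implicit: the odd-derivative conditions on the $dz$-coefficients are redundant, and $p\phi_0+q>0$ is what justifies both the division and the equivalence $G^{(2k+1)}(0)=0\iff a^{(2k+1)}(1)=0$.
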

\begin{proof}
	Assume first that a smooth form $\alpha\in \Omega^1(L(p,q))$ associated to the triple $(a,\tau_0,\phi_0)$ exists. According to \cref{prop:charts}, $(U_0,\psi_0)$ is part of toroidal atlas composed by two charts $\{(U_1,\psi_1),(U_0,\psi_0)\}$.
	Let us define the forms 
	\begin{equation}
		\alpha_i=\psi_i^*\alpha,\qquad i=0,1.
	\end{equation}
	By \cref{def:triple} the form $\alpha_0$ is given by
	\begin{equation}\label{eq:alpha0}
		\alpha_0=\left(\frac{\tau_0}{2\pi}-\phi_0a(r)\right)dz+a(r)d\theta.
	\end{equation}
	Since $\alpha_0$ is assumed to be smooth, by \cref{lem:smooth_alpha} we have 
	\begin{equation}
		a(0)=a^{(2k+1)}(0)=0.
	\end{equation}
	By \cref{prop:charts}, $\psi_0(U\setminus\ell)=\psi_1(U\setminus\ell)$. Hence, the restriction of $\alpha_1$ to $U\setminus\ell$ satisfies 
	\begin{equation}
		\alpha_1|_{U\setminus\ell}=(\psi_0^{-1}\circ\psi_1)^*\alpha_0|_{U\setminus\ell}.
	\end{equation}
	Combining the expression of $\psi_0^{-1}\circ \psi_1$ given in \eqref{eq:psi_def} with the one of $\alpha_0$ in \eqref{eq:alpha0}, we obtain
	\begin{equation}\label{eq:alpha1}
		\begin{aligned}
			{\alpha_1}|_{U\setminus\ell}=\left(s\frac{\tau_0}{2\pi}+(m-s\phi_0)a(1-r)\right)dz+\left(p\frac{\tau_0}{2\pi}-(q+p\phi_0)a(1-r)\right)d\theta.
		\end{aligned}
	\end{equation}
	Since $\alpha$ is assumed to be smooth, the form on the right hand side of the latter equality extends smoothly to the whole $U$. Hence, an application of \cref{lem:smooth_alpha} yields
	\begin{equation}
		a(1)=\frac{p\tau_0}{2\pi(p\phi_0+q)},\qquad a^{(2k+1)}(1)=0,\qquad \forall \,\,k\in\mathbb N.
	\end{equation}
	
	Conversely, assume that \eqref{eq:smooth_cond} holds. Then we define the differential forms $\alpha_0$ and $\alpha_1$ as in \eqref{eq:alpha0} and \eqref{eq:alpha1}. Since \eqref{eq:smooth_cond} holds by assumption, \cref{lem:smooth_alpha} ensures that $\alpha_0,\alpha_1$ are smooth on $U$. We define $\alpha$ on $L(p,q)$ as 
	\begin{equation}\label{eq:def_alpha_converse}
		\alpha|_{U_i}=\psi_i^{-1*}\alpha_i,\qquad i=0,1.
	\end{equation}
	Since $L(p,q)=U_0\cup U_1$, \eqref{eq:def_alpha_converse} defines $\alpha$ on the whole and $L(p,q)$. By definition of $\alpha_0,\alpha_1$ 
	\begin{equation}
		(\psi_0^{-1}\circ\psi_1)^*\alpha_0=\alpha_1,
	\end{equation}
	hence the definition \eqref{eq:def_alpha_converse} of $\alpha$ is well posed on the overlap $U_0\cap U_1$.
\end{proof}
\begin{lemma}\label{lem:a_0a_1}
	Let $\tau_0>0$, $\phi_0>-\frac{p}{q}$ and $a:[0,1]\to \mathbb R$ satisfying \eqref{eq:smooth_cond}. Let $\alpha\in\Omega^1(L(p,q))$ be associated to the triple $(a,\tau_0,\phi_0)$. Then there exist $a_0,a_1:[0,1]\to\mathbb R$ smooth functions such that 
	\begin{equation}
		\psi_i^*\alpha=\left(\frac{2\pi}{\tau_i}-\phi_i a_i(r^2)\right)dz+a_i(r^2)dz,\qquad i=0,1,
	\end{equation}
	where the quantities $\tau_1,\phi_1$ are defined as 
	\begin{equation}\label{eq:tau_1,phi_1}
		\tau_1=\frac{\tau_0}{q+p\phi_0},\qquad \phi_1=\frac{m-s\phi_0}{q+p\phi_0},
	\end{equation}
	and $m,s\in\mathbb Z$ such that $mp+sq=1$. Furthermore the functions $a_0,a_1:[0,1]\to\mathbb R$ satisfy
	\begin{equation}\label{eq:boundary_interpol}
		a_0(r^2)=a(r),\qquad a_1(r^2)=p\frac{\tau_0}{2\pi}-\frac{\tau_0}{\tau_1}a_0((1-r)^2),\qquad \forall \, r\in[0,1].
	\end{equation}
\end{lemma}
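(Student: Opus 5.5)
The plan is to read off both pullbacks directly from the explicit formulas in the proof of \cref{lem:suff_cond}. The only real work is to show that the radial coefficient functions can be written as smooth functions of $r^2$. I read the displayed formula in the statement with the evident typos corrected:
\begin{equation}
\psi_i^*\alpha=\Bigl(\tfrac{\tau_i}{2\pi}-\phi_i a_i(r^2)\Bigr)dz+a_i(r^2)\,d\theta .
\end{equation}
This is the shape of \eqref{eq:in_def_triple}, and it is what the computation below produces.

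\textbf{Step 1: the chart $\psi_0$.} Here $\psi_0^*\alpha$ is given by \eqref{eq:alpha0}, so it suffices to find a smooth $a_0:[0,1]\to\mathbb R$ with $a_0(r^2)=a(r)$. By \eqref{eq:smooth_cond}, all odd derivatives of $a$ vanish at $0$.
\begin{itemize}
\item Near $r=0$, I extend $a$ to a smooth even function on a neighbourhood of $0$. Whitney's theorem on even functions then gives a smooth $h$ with $a(r)=h(r^2)$. This is the same standard fact already used in \cref{lem:smooth_alpha}.
\item Away from $0$, the map $r\mapsto r^2$ is a diffeomorphism of $(0,1]$ onto itself, so $a_0(u)=a(\sqrt u)$ is smooth for $u>0$.
\item On the overlap the two descriptions agree, so they patch to a smooth $a_0$ on $[0,1]$.
\end{itemize}

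\textbf{Step 2: the chart $\psi_1$.} I use the expression \eqref{eq:alpha1} of $\psi_1^*\alpha$ on $U\setminus\ell$. With $\tau_1,\phi_1$ defined by \eqref{eq:tau_1,phi_1}, one has $q+p\phi_0=\tau_0/\tau_1$, so the $d\theta$-coefficient is
\begin{equation}
p\tfrac{\tau_0}{2\pi}-\tfrac{\tau_0}{\tau_1}\,a(1-r).
\end{equation}
I therefore define $a_1$ by the second identity in \eqref{eq:boundary_interpol}. Since $a(1-r)=a_0((1-r)^2)$, that identity is consistent with Step 1.

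For smoothness of $a_1$ as a function of $u=r^2$ near $u=0$, I note that $r\mapsto a(1-r)$ has vanishing odd derivatives at $r=0$, because $a^{(2k+1)}(1)=0$ by \eqref{eq:smooth_cond}. Hence Whitney's argument from Step 1 applies again near $0$. Away from $0$ I again use that $r\mapsto r^2$ is a diffeomorphism.

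\textbf{Step 3: the $dz$-coefficient.} It remains to check that the $dz$-coefficient of \eqref{eq:alpha1} equals $\tfrac{\tau_1}{2\pi}-\phi_1 a_1(r^2)$. Substituting the definition of $a_1$ gives
\begin{equation}
\tfrac{\tau_1}{2\pi}-\phi_1 a_1(r^2)=\frac{\tau_0\,(1-mp+sp\phi_0)}{2\pi(q+p\phi_0)}+(m-s\phi_0)\,a(1-r).
\end{equation}
Using $mp+sq=1$, the numerator becomes $1-mp+sp\phi_0=s(q+p\phi_0)$. So the constant term is $s\tau_0/2\pi$, which matches \eqref{eq:alpha1} exactly. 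Both identities then hold on $U\setminus\ell$, and they extend to all of $U$ by continuity.

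\textbf{Main obstacle.} The only delicate point is the smoothness at the endpoints of $a_0$ and $a_1$ as functions of $r^2$, which is handled by Whitney's even-function theorem under the odd-derivative conditions \eqref{eq:smooth_cond}. The identities themselves are direct algebra once $q+p\phi_0=\tau_0/\tau_1$ and $mp+sq=1$ are used.
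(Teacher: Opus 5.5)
Your proposal is correct and follows the paper's proof. As there, you read off $\psi_1^*\alpha$ from the computation \eqref{eq:alpha1}, use the vanishing of the odd derivatives of $a$ at $0$ and $1$ to write the radial coefficients as smooth functions of $r^2$, and finish with $q+p\phi_0=\tau_0/\tau_1$ and $mp+sq=1$. Your typo corrections ($\tau_i/2\pi$ and $d\theta$) agree with the formula \eqref{eq:smooth1} the paper actually derives, and your Whitney-theorem argument makes explicit a smoothness step the paper asserts without proof.
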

\begin{proof}
	By \cref{def:triple}, we know that 
	\begin{equation}
		\psi_0^*\alpha=\left(\frac{\tau_0}{2\pi}-\phi_0 a(r)\right)dz+a(r)dz.
	\end{equation}
	Repeating the computation which in the proof of \cref{lem:suff_cond} led to \eqref{eq:alpha1}, we deduce 
	\begin{equation}
		\begin{aligned}
			\psi_1^*\alpha=\left(s\frac{\tau_0}{2\pi}+(m-s\phi_0)a(1-r)\right)dz+\left(p\frac{\tau_0}{2\pi}-(q+p\phi_0)a(1-r)\right)d\theta.
		\end{aligned}
	\end{equation}
	By hypothesis the function $a$ satisfies \eqref{eq:smooth_cond}. In particular all of its odd derivatives vanish at $r=0$ and $r=1$. This implies that there exist two smooth functions $a_0,a_1:[0,1]\to\mathbb R$ such that 
	\begin{equation}
		a_0(r^2)=a(r),\qquad a_1(r^2)=p\frac{\tau_0}{2\pi}-(q+p\phi_0)a(1-r),\qquad \forall\,r\in[0,1].
	\end{equation}
	Inverting the second of the above equations for $a(1-r)$ and substituting the result into the expression of $\psi_1^*\alpha$ we obtain 
	\begin{equation}
		\psi_1^*\alpha=\left(\frac{\tau_0}{2\pi}\left(\frac{sq+mp}{q+p\phi_0}\right)-\left(\frac{m-s\phi_0}{q+p\phi_0}\right)a_1(r^2)\right)dz+a_1(r^2)d\theta.
	\end{equation}
	Combining the fact that $sq+mp=1$ with the definitions of $\tau_1$ and $\phi_1$ given in \eqref{eq:tau_1,phi_1}, we can rewrite the form $\psi_1^*\alpha$ as
	\begin{equation}\label{eq:smooth1}
		\psi_1^*\alpha=\left(\frac{\tau_1}{2\pi}-\phi_1a_1(r^2)\right)dz+a_1(r^2)d\theta.
	\end{equation}
\end{proof}
Contact forms associated to triples $(a,\tau_0,\phi_0)$ are determined by the couple $\tau_0,\phi_0$ alone.
\begin{theorem}\label{lem:triple->cont}
	Let $\tau_0>0$, $\phi_0>-\frac{p}{q}$ and let $a,b:[0,1]\to \mathbb R$ be two smooth functions. Assume that there exists two positive contact forms $\alpha$ and $\beta\in \Omega^1(L(p,q))$ associated to the triples $(a,\tau_0,\phi_0)$ and $(b,\tau_0,\phi_0)$. Then there exists a diffeomorphism $\Psi:L(p,q)\to L(p,q)$ such that 
	\begin{equation}\label{eq:contacto_cond}
		\Psi^*\beta=\alpha.
	\end{equation}
\end{theorem}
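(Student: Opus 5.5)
The plan is to build $\Psi$ as a purely radial reparametrization in the toroidal charts of \cref{prop:charts}. Such a map preserves the coordinates $(\theta,z)$, so it is automatically compatible with the transition map \eqref{eq:psi_def}.

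\emph{Step 1: the contact condition in the chart.} Write $\psi_0^*\alpha=f\,dz+a\,d\theta$ with $f=\frac{\tau_0}{2\pi}-\phi_0 a$. A direct computation gives
\begin{equation}
\psi_0^*(\alpha\wedge d\alpha)=(fa'-af')\,dz\wedge dr\wedge d\theta=\frac{\tau_0}{2\pi}\,a'(r)\,dz\wedge dr\wedge d\theta .
\end{equation}
Hence positivity of $\alpha$ forces $a'$ to have a fixed sign on $(0,1)$. With the orientation conventions of \cref{sec:lens}, $a'>0$ there, and the same holds for $b$. By \cref{lem:suff_cond}, both functions vanish at $0$ and both equal $A:=\frac{p\tau_0}{2\pi(p\phi_0+q)}$ at $1$. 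So $a$ and $b$ are increasing bijections $[0,1]\to[0,A]$.

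Next I pass to the smooth functions $a_0,b_0$ of \cref{lem:a_0a_1}, defined by $a(r)=a_0(r^2)$ and $b(r)=b_0(r^2)$. Rewriting $\alpha$ in Cartesian coordinates $(x,y,z)$ near $\ell$ gives $a\,d\theta=a_0(x^2+y^2)(x\,dy-y\,dx)$. Evaluating $\alpha\wedge d\alpha$ on $\ell$ then yields a nonzero multiple of $\tau_0\,a_0'(0)$, so $a_0'(0)>0$, and likewise $b_0'(0)>0$. The same argument in the chart $\psi_1$, using the functions $a_1,b_1$ of \cref{lem:a_0a_1}, gives $a_1'(0),b_1'(0)\neq 0$ with matching signs.

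\emph{Step 2: the radial diffeomorphism.} Define $h:=b^{-1}\circ a:[0,1]\to[0,1]$. This is a homeomorphism, smooth with positive derivative on $(0,1)$, and on $U_0$ I set
\begin{equation}
\Psi\circ\psi_0(r,\theta,z)=\psi_0(h(r),\theta,z).
\end{equation}
Since $b(h(r))=a(r)$ and the $dz$-coefficient is determined by the $d\theta$-coefficient through the same affine relation, we get $\psi_0^*\Psi^*\beta=\psi_0^*\alpha$ on $U_0\setminus\ell$.

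In the chart $U_1$, the transition \eqref{eq:psi_def} sends $r\mapsto 1-r$ and acts linearly on $(\theta,z)$. Therefore $\Psi$ reads $(r,\theta,z)\mapsto(1-h(1-r),\theta,z)$ there, so $\Psi$ is well defined on $L(p,q)$. Equality of the forms on the dense set $U_0\cap U_1$ extends to all of $L(p,q)$ by continuity, and the inverse of $\Psi$ is built the same way from $a^{-1}\circ b$.

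\emph{Step 3: smoothness across $\ell_0,\ell_1$ (the main obstacle).} I must show that $\Psi$ is a diffeomorphism near the two core circles, i.e.\ that $(x,y)\mapsto\frac{h(r)}{r}(x,y)$ is smooth with smooth inverse. Since $a_0'(0)>0$, the function $b_0^{-1}$ is smooth near $0$, and we can write
\begin{equation}
h(r)^2=b_0^{-1}(a_0(r^2))=k(r^2),
\end{equation}
with $k$ smooth, $k(0)=0$ and $k'(0)=a_0'(0)/b_0'(0)>0$. Then $k(u)=u\,\kappa(u)$ with $\kappa$ smooth and positive. This gives $h(r)/r=\sqrt{\kappa(r^2)}$, a smooth positive function of $x^2+y^2$. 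So the map is smooth, and its differential at the origin is $\sqrt{\kappa(0)}\,\mathrm{Id}$, hence invertible. The identical argument with $a_1,b_1$ from \cref{lem:a_0a_1}, expressed in the chart $\psi_1$ where the map is $r\mapsto 1-h(1-r)$, handles $\ell_1$.

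The crux of the whole proof is the nondegeneracy $a_0'(0)>0$ (and its analogue at $r=1$) obtained from the contact condition along the core orbits. Without it, $h$ could fail to be smooth at the endpoints.
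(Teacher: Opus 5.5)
Your proposal is correct and follows essentially the paper's proof. Both use the radial reparametrization $r\mapsto h(r)=b^{-1}(a(r))=\sqrt{b_0^{-1}(a_0(r^2))}$ in the toroidal charts, with smoothness across the core orbits coming from $a_i'(0),b_i'(0)>0$, which the contact condition forces.

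The only difference is bookkeeping. You define $\Psi$ on $U_1$ by conjugating through the transition map, so well-definedness is automatic. The paper instead defines $\Psi_1$ directly from $b_1^{-1}\circ a_1$ and then verifies the cocycle condition. That verification is the same computation you leave implicit when you assert that $(1-h(1-r))^2=b_1^{-1}(a_1(r^2))$ in order to rerun the argument at $\ell_1$; it follows from $b\circ h=a$ and the common affine relation in \cref{lem:a_0a_1}, but you should state it. Also, in Step 3 it is $b_0'(0)>0$, not $a_0'(0)>0$, that makes $b_0^{-1}$ smooth near $0$.
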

\begin{proof}
	By \cref{lem:a_0a_1}, for $i=0,1$ there exist $a_i,b_i:[0,1]\to \mathbb R$ smooth functions such that 
	\begin{equation}
		\psi_i^*\alpha=\left(\frac{2\pi}{\tau_i}-\phi_i a_i(r^2)\right)dz+a_i(r^2)d\theta,\qquad \psi_i^*\beta=\left(\frac{2\pi}{\tau_i}-\phi_i b_i(r^2)\right)dz+b_i(r^2)d\theta,
	\end{equation}
	where the quantities $\tau_1,\phi_1$ are defined in \eqref{eq:tau_1,phi_1}. A standard computation shows that 
	\begin{equation}
		\psi_i^*(\alpha\wedge d\alpha)=\frac{4\pi}{\tau_i} a_i'(r^2)dx \wedge dy\wedge dz,\qquad \psi_i^*(\beta\wedge d\beta)=\frac{4\pi}{\tau_i} b_i'(r^2)dx \wedge dy\wedge dz,
	\end{equation}
	where $(x,y,z)=(r\cos\theta,r\sin\theta,z)$. By hypothesis $\alpha$ and $\beta$ are positive contact forms, therefore 
	\begin{equation}\label{eq:increasing}
		a_i'(r)>0,\qquad b_i'(r)>0,\qquad \forall\,r\in[0,1],\qquad i=0,1.
	\end{equation}
	Furthermore, by \cref{lem:a_0a_1}, both the $a_i$'s and the $b_i$'s satisfy equation \eqref{eq:boundary_interpol}, thus
	\begin{equation}\label{eq:same range}
		a_0(1)=b_0(1)=p\frac{\tau_1}{2\pi},\qquad a_1(1)=b_1(1)=p\frac{\tau_0}{2\pi}\qquad a_i(0)=b_i(0)=0,\qquad i=0,1.
	\end{equation}
	In particular, for each $i=0,1$, $a_i:[0,1]\to \mathbb R$ and $b_i:[0,1]\to\mathbb R$ are immersions with the same range, therefore 
	\begin{equation}
		b_i^{-1}\circ a_i:[0,1]\to [0,1],
	\end{equation}
	is a well defined diffeomorphism. For $i=0,1$ we define the maps 
	\begin{equation}
		\Psi_i:U\to U,\qquad \Psi_i(x,y,z)=\left(\sqrt{b_i^{-1}(a_i(r^2))}\frac{x}{r},\sqrt{b_i^{-1}(a_i(r^2))}\frac{y}{r},z\right).
	\end{equation}
	We claim that $\Psi_i$ is a smooth diffeomorphism. We first prove that $\Psi_i$ is smooth. Let $c_i=\frac{a_i'(0)}{b_i'(0)}$, which is well defined thanks to \eqref{eq:increasing}. By \eqref{eq:same range}, $a_i(0)=b_i(0)=0$, therefore there exists a smooth function $f_i:[0,1]\to\mathbb R$ such that 
	\begin{equation}
		b_i^{-1}(a_i(r^2))=c_i\left(r^2+r^4f_i(r^2)\right).
	\end{equation}
	Therefore we can write 
	\begin{equation}
		\Psi_i(x,y,z)=\left(\sqrt{c_i(1+r^2f_i(r^2))}x,\sqrt{c_i(1+r^2f_i(r^2))}y,z\right),
	\end{equation}
	which proves the smoothness of $\Psi_i$. To see that $\Psi_i$ is actually a diffeomorphism note that its inverse can be explicitly written as
	\begin{equation}
		\Psi_i^{-1}(x,y,z)=\left(\sqrt{a_i^{-1}(b_i(r^2))}\frac{x}{r},\sqrt{a_i^{-1}(b_i(r^2))}\frac{y}{r},z\right),
	\end{equation}
	and its smoothness is understood by the same argument proving the smoothness of $\Psi_i$. We then define $\Psi:L(p,q)\to L(p,q)$ as 
	\begin{equation}
		\Psi|_{U_i}=\psi_i\circ \Psi_i\circ \psi_i^{-1},\qquad i=0,1.
	\end{equation}
	To see that $\Psi$ is well defined, we need to show that
	\begin{equation}\label{eq:cocycle_cond}
		(\psi_0^{-1}\circ\psi_1)^{-1}\circ\Psi_0\circ(\psi_0^{-1}\circ\psi_1)=\Psi_1.
	\end{equation}
	The transition map $\psi:=\psi_0^{-1}\circ \psi_1:U\setminus\ell\to U\setminus \ell$ is computed in \eqref{eq:psi_def}, and in cylindrical coordinates $(r,\theta,z)$ reads
	\begin{equation}
		\psi(r,\theta,z)=(1-r,-q\theta+mz,p\theta+sz),\qquad \psi^{-1}(r,\theta,z)=(1-r,s\theta-mz,-p\theta-qz).
	\end{equation}
	The maps $\Psi_0,\Psi_1$ can also be written in cylindrical coordinates $(r,\theta,z)$ as 
	\begin{equation}\label{eq:polar_Phi}
		\Psi_0(r,\theta,z)=\left(\sqrt{b_0^{-1}(a_0(r^2))},\theta,z\right),\qquad \Psi_1(r,\theta,z)=\left(\sqrt{b_1^{-1}(a_1(r^2))},\theta,z\right).
	\end{equation}
	Therefore we may compute 
	\begin{equation}\label{eq:conjugated_Phi}
		\psi^{-1}\circ\Psi_0\circ\psi(r,\theta,z)=\left(1-\sqrt{b_0^{-1}(a_0((1-r)^2))},\theta,z\right).
	\end{equation}
	By \cref{lem:a_0a_1}, both the $a_i$'s and the $b_i$'s satisfy equation \eqref{eq:boundary_interpol}, in particular 
	\begin{equation}
		a_0((1-r)^2)=-\frac{p\tau_1}{2\pi}-\frac{\tau_1}{\tau_0}a_1(r^2),\qquad b_0((1-r)^2)=-\frac{p\tau_1}{2\pi}-\frac{\tau_1}{\tau_0}b_1(r^2),\qquad \forall r\in [0,1].
	\end{equation}
	From the second equation we find
	\begin{equation}
		r=1-\sqrt{b_0^{-1}\left(-\frac{p\tau_1}{2\pi}-\frac{\tau_1}{\tau_0}b_1(r^2)\right)}.
	\end{equation}
	Evaluating both sides of the latter identity at $\sqrt{b_1^{-1}(a_1(r^2))}$ we find
	\begin{equation}
		\sqrt{b_1^{-1}(a_1(r^2))}=1-\sqrt{b_0^{-1}\left(-\frac{p\tau_1}{2\pi}-\frac{\tau_1}{\tau_0}a_1(r^2)\right)}=1-\sqrt{b_0^{-1}\left(a_0((1-r)^2)\right)}.
	\end{equation}
	Substituting the above identity into \eqref{eq:conjugated_Phi}, in light of the polar coordinate expression of $\Psi_1$ found in \eqref{eq:polar_Phi}, we deduce the validity of \eqref{eq:cocycle_cond}. Therefore $\Psi:L(p,q)\to L(p,q)$ is a well defined smooth diffeomorphism. In  remains to show that \eqref{eq:contacto_cond} holds. Since the chart $U_0$ is dense in $L(p,q)$, it is sufficient to show that 
	\begin{equation}
		\psi_0^*\Psi^*\beta=\psi_0^*\alpha.
	\end{equation}
	Hence, we compute 
	\begin{equation}
		\begin{aligned}
			\psi_0^*\Psi^*\beta&=\psi_0^*\Psi^*\psi_0^{-1*}\psi_0^*\beta=\Psi_0^*\psi_0^*\beta
			=\left(\frac{2\pi}{\tau_0}-\phi_0b_0\circ\Psi_0\right)dz+b_0\circ\Psi_0d\theta\\
			&=\left(\frac{2\pi}{\tau_0}-\phi_0a_0(r^2)\right)dz+a_0(r^2)d\theta=\psi_0^*\alpha,
		\end{aligned}
	\end{equation}
where the fourth equality is obtained from \eqref{eq:polar_Phi}.
\end{proof}
\subsection{K-contact forms with prescribed periods} We now exploit the results of the previous subsection to construct K-contact forms on $L(p,q)$.
\begin{theorem}\label{thm:K-form_on_lens}
	For any $\tau_0,\tau_1>0$ there exists a positive K-contact form on $L(p,q)$ admitting two periodic Reeb orbits $\ell_0,\ell_1$ of minimal periods $\tau_0$, $\tau_1$ and rotation numbers
	\begin{equation}\label{eq:phi01}
		\phi_0=\frac{1}{p}\left(\frac{\tau_0}{\tau_1}-q\right),\qquad \phi_1=\frac{1}{p}\left(\frac{\tau_1}{\tau_0}-s\right),
	\end{equation}
	where $s\in\mathbb Z$, $s\equiv q^{-1}\,\mathrm{mod}\,p$. In particular, if $\tau_0/\tau_1\in\mathbb Q$, then $\alpha$ is quasi-regular, otherwise $\alpha$ is irregular and $\ell_0,\ell_1$ are the only periodic Reeb orbits.
\end{theorem}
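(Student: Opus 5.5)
The plan is to build the form directly through \cref{def:triple} and \cref{lem:suff_cond}, and then read off the Reeb dynamics in the two toroidal charts. Set $\phi_0:=\frac1p\left(\frac{\tau_0}{\tau_1}-q\right)$, so that $q+p\phi_0=\tau_0/\tau_1>0$ and the hypothesis $\phi_0>-q/p$ of \cref{lem:suff_cond} holds. The boundary value required there becomes
\[
a(1)=\frac{p\tau_0}{2\pi(p\phi_0+q)}=\frac{p\tau_1}{2\pi}.
\]
I would take $a(r)=\frac{p\tau_1}{2\pi}h(r)$, where $h:[0,1]\to[0,1]$ is smooth and strictly increasing on $(0,1)$, with $h(r)=r^2$ near $0$ and $h(r)=1-(1-r)^2$ near $1$; such an $h$ can be obtained by gluing with a partition of unity and keeping $h'>0$ in the interior. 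Then all the conditions \eqref{eq:smooth_cond} hold, and \cref{lem:suff_cond} produces a smooth form $\alpha$ on $L(p,q)$ associated with $(a,\tau_0,\phi_0)$.

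\textbf{Contact condition.} By \cref{lem:a_0a_1}, in each chart we have
\[
\psi_i^*\alpha=\left(\tfrac{\tau_i}{2\pi}-\phi_i a_i(r^2)\right)dz+a_i(r^2)\,d\theta ,
\]
with $\tau_1,\phi_1$ given by \eqref{eq:tau_1,phi_1}. Since $q+p\phi_0=\tau_0/\tau_1$, the definition of $\tau_1$ there returns our prescribed $\tau_1$. Using $mp+sq=1$ one checks
\[
\phi_1=\frac{m-s\phi_0}{q+p\phi_0}=\frac1p\left(\frac{\tau_1}{\tau_0}-s\right),
\]
which gives \eqref{eq:phi01}; note that $mp+sq=1$ means exactly $s\equiv q^{-1}\bmod p$. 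A direct computation gives $\psi_i^*(\alpha\wedge d\alpha)=\frac{\tau_i}{\pi}\,a_i'(r^2)\,dx\wedge dy\wedge dz$. The choice of $h$ gives $a_0(u)=\frac{p\tau_1}{2\pi}u$ near $u=0$, with $a_0'>0$ elsewhere, and $a_1$ is an affine function of $a_0((1-r)^2)$ with negative slope. Hence $a_i'>0$ on $[0,1]$, including both endpoints, and $\alpha$ is a positive contact form. The endpoint behaviour is the delicate point; the local quadratic form of $h$ is chosen precisely so that nondegeneracy holds along $\ell_0$ and $\ell_1$.

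\textbf{Reeb field and K-contact property.} On $U\setminus\ell$ write $A=\frac{\tau_0}{2\pi}-\phi_0 a$, so that $A'=-\phi_0a'$. Then $\alpha(\partial_z+\phi_0\partial_\theta)=\tau_0/2\pi$, and
\[
\iota_{\partial_z+\phi_0\partial_\theta}\big(A'\,dr\wedge dz+a'\,dr\wedge d\theta\big)=-(A'+\phi_0a')\,dr=0 .
\]
Hence $R=\frac{2\pi}{\tau_0}(\partial_z+\phi_0\partial_\theta)$ in chart $0$, and symmetrically $R=\frac{2\pi}{\tau_1}(\partial_z+\phi_1\partial_\theta)$ in chart $1$. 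Since $\alpha$ has $(\theta,z)$-independent coefficients in both charts, it is invariant under the $T^2$-action of \cref{prop:T2_action}. The vector field $R$ is a linear combination of the two infinitesimal generators of this action. Averaging any Riemannian metric over the compact group $T^2$ gives a $T^2$-invariant metric, for which $R$ is Killing; so $\alpha$ is K-contact.

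\textbf{Periodic orbits.} On $\ell_0=\psi_0(\{r=0\})$ we have $R=\frac{2\pi}{\tau_0}\partial_z$ with $z$ of period $2\pi$, so $\ell_0$ has minimal period $\tau_0$; likewise $\ell_1$ has minimal period $\tau_1$. On each torus $\{r=c\}$, $0<c<1$, the flow is linear with slope $\phi_0$. All such orbits are therefore closed when $\phi_0\in\mathbb Q$, i.e.\ when $\tau_0/\tau_1\in\mathbb Q$, giving the quasi-regular case. When $\tau_0/\tau_1\notin\mathbb Q$ these orbits are dense in their tori, so $\alpha$ is irregular and $\ell_0,\ell_1$ are the only periodic orbits. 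The rotation numbers of $\ell_0,\ell_1$ are read off from the linearized flow, which rotates the $(x,y)$-plane by the angle $2\pi\phi_i$ after time $\tau_i$; this matches \cref{def:rotation_num}.
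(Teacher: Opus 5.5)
Your proposal is correct and follows essentially the same route as the paper: you build $\alpha$ from a triple via \cref{lem:suff_cond} and \cref{lem:a_0a_1}, compute the Reeb field $\frac{2\pi}{\tau_i}(\partial_z+\phi_i\partial_\theta)$ in both charts to read off the periodic orbits, and get the K-contact property from a $T^2$-invariant metric for the action of \cref{prop:T2_action}. Your explicit choice $h(r)=r^2$ near $0$ and $h(r)=1-(1-r)^2$ near $1$ is a useful sharpening, because the paper's hypothesis $a'>0$ on $(0,1)$ alone does not force $a_0'(0)=a''(0)/2>0$ or $a_1'(0)=-\frac{\tau_0}{2\tau_1}a''(1)>0$, i.e.\ it does not guarantee nondegeneracy along $\ell_0,\ell_1$; one small correction is that positivity is needed, and holds, only on $[0,1)$, since in fact $a_i'(1)=0$ (from $a'(0)=a'(1)=0$), and $u=1$ lies outside the open chart $U$.
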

\begin{proof}
	Since $\tau_0,\tau_1>0$, then $\phi_0>-\frac{q}{p}$. Let $a:[0,1]\to \mathbb R$ be a smooth function satisfying \eqref{eq:smooth_cond} and the additional condition 
	\begin{equation}\label{eq:contact_cond}
		a'(r)>0\qquad \forall r\in (0,1).
	\end{equation}
	Then according to \cref{lem:suff_cond} there exists a smooth 1-form $\alpha\in\Omega^1(L(p,q))$ associated to the triple $(a,\tau_0,\phi_0)$ in the sense of \cref{def:triple}. Let $m\in\mathbb Z$ such that $mp+sq=1$ and note that $\phi_0$ and $\phi_1$, defined in \eqref{eq:phi01}, satisfy the following equations
	\begin{equation}\label{eq:tau1_phi1_again}
		\tau_1=\frac{\tau_0}{q+p\phi_0},\qquad \phi_1=\frac{m-s\phi_0}{q+p\phi_0}.
	\end{equation}
	Therefore according to \cref{lem:a_0a_1} there are two smooth functions $a_0,a_1:[0,1]\to\mathbb R$ such that 
	\begin{equation}\label{eq:psi*ai}
		\psi_i^*\alpha=\left(\frac{\tau_i}{2\pi}-\phi_i a_i(r^2)\right)dz+a_i(r^2)d\theta,\qquad i=0,1.
	\end{equation}
	A standard computation then shows that 
	\begin{equation}
		\psi_i^*(\alpha\wedge d\alpha)=\frac{4\pi}{\tau_i} a_i'(r^2)dx \wedge dy\wedge dz,\qquad i=0,1.
	\end{equation}
	Combining \eqref{eq:boundary_interpol} and \eqref{eq:contact_cond} we see that $a_i'(r)>0$ for all $r\in [0,1)$, therefore $\alpha$ is a contact form. Let $R$ denote its Reeb vector field of $\alpha$. Computing the Reeb vector field of the form \eqref{eq:psi*ai} yields
	\begin{equation}\label{eq:Reeb_explicit}
		\psi_{i*}^{-1}R=\frac{2\pi}{\tau_i}\left(\partial_z+\phi_i\partial_\theta\right),\qquad i=0,1.
	\end{equation}
	Since $\partial_\theta|_{r=0}=(x\partial_y-y\partial_x)|_{r=0}=0$, in both cases we have that 
	\begin{equation}
		\psi_{i*}^{-1}R|_{r=0}=\frac{2\pi}{\tau_i}\partial_z.
	\end{equation}
	Since $z$ is a $2\pi$-periodic coordinate, we deduce that the curves
	\begin{equation}
		\ell_i:=\psi_i(\ell),\qquad i=0,1,
	\end{equation}
	where we recall that $\ell=\{0\}\times S^1\subset U$, are periodic orbits of $R$ of minimal period $\tau_i$ and rotation numbers $\phi_i$. We see that $\phi_i$ is rational if and only if $\tau_1/\tau_0$ is rational. Therefore from the explict expression of the Reeb vector field \eqref{eq:Reeb_explicit} we deduce that if $\tau_1/\tau_0$ is rational then all Reeb orbits are periodic, while if it is irrational, then $\ell_0$ and $\ell_1$ are the only periodic orbits.
	
	It remains to show that $\alpha$ is K-contact. Let $g$ be a Riemannian metric which is invariant under the $T^2$-action of \cref{prop:T2_action}. Let us denote 
	\begin{equation}
		R_0=\psi_{0*}^{-1}R.
	\end{equation}
	Then, in the notation of \cref{prop:T2_action}, by \eqref{eq:Reeb_explicit} we have 
	\begin{equation}
		\psi_0\circ e^{tR_0}(re^{i\theta},z)
		=\psi_0\left(re^{i\left(\theta+\frac{2\pi \phi_0}{\tau_0}t\right)},z+\frac{2\pi}{\tau_0}t\right)
		=\left(\frac{2\pi\phi_0}{\tau_0}t,\frac{2\pi}{\tau_0}t\right)\cdot \psi_0(re^{i\theta},z),\qquad \forall\,t\in\mathbb R.
	\end{equation}
	Since $e^{tR_0}=\psi_0^{-1}\circ e^{t R}\circ \psi_0$, we deduce that
	\begin{equation}
		e^{tR}\circ\psi_0(re^{i\theta},z)=\left(\frac{2\pi}{\tau_0}t,\frac{2\pi}{\tau_0}\phi_0t\right)\cdot \psi_0(re^{i\theta},z).
	\end{equation}
	Therefore, the flow of the Reeb field is a subgroup of $T^2$. In particular, it holds $\mathcal L_Rg=0$.
\end{proof}
We have the following immediate consequence of \cref{thm:K-form_on_lens}.
\begin{corollary}
	For any $\tau_0,\tau_1\in \mathbb (0,+\infty)$ such that $\frac{\tau_0}{\tau_1}\in\mathbb R\setminus \mathbb Q$ there exists a positive irregular K-contact form on $L(p,q)$ having exactly two periodic orbits $\ell_{\tau_0},\ell_{\tau_1}$ of minimal periods $\tau_0,\tau_1$ respectively and rotation numbers 
	\begin{equation}
		\phi(\ell_{\tau_0})=\frac{1}{p}\left(\frac{\tau_0}{\tau_1}-q\right)\,\,\mathrm{mod}\,\,1,\qquad \phi(\ell_{\tau_1})=\frac{1}{p}\left(\frac{\tau_1}{\tau_0}-s\right)\,\,\mathrm{mod}\,\,1.
	\end{equation}
\end{corollary}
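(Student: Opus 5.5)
The corollary follows by specializing \cref{thm:K-form_on_lens} to the case $\tau_0/\tau_1\in\mathbb R\setminus\mathbb Q$ and then checking that the rotation numbers are well defined modulo $1$.

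The plan is as follows.

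\textbf{Step 1: apply the theorem.} Fix $\tau_0,\tau_1>0$ with irrational ratio. Choose $s\in\mathbb Z$ with $s\equiv q^{-1}\,\mathrm{mod}\,p$, which exists because $p,q$ are coprime, and pick $m$ with $mp+sq=1$. Apply \cref{thm:K-form_on_lens} with these data. This produces a positive K-contact form $\alpha$ on $L(p,q)$ whose Reeb field has periodic orbits $\ell_0=\psi_0(\ell)$ and $\ell_1=\psi_1(\ell)$, with minimal periods $\tau_0,\tau_1$ and rotation numbers given by \eqref{eq:phi01}. Name these orbits $\ell_{\tau_0}:=\ell_0$ and $\ell_{\tau_1}:=\ell_1$.

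\textbf{Step 2: irregularity and the count of orbits.} Because $\tau_0/\tau_1$ is irrational, the last clause of \cref{thm:K-form_on_lens} says that $\alpha$ is irregular and that $\ell_0,\ell_1$ are its only periodic orbits. The same conclusion can be reached independently from the explicit Reeb field \eqref{eq:Reeb_explicit}. In each chart, $\phi_i$ is irrational, so on every torus $\{r=\mathrm{const}>0\}$ the flow is a linear flow with irrational slope and has no closed orbits. Alternatively, \cref{thm:gather_results} also gives exactly two periodic orbits.

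\textbf{Step 3: rotation numbers modulo $1$.} By \cref{def:rotation_num}, the rotation number is only defined modulo $1$: it is read off from the eigenvalues $e^{\pm 2\pi i\phi}$. Moreover, by the remark following \cref{prop:Reeb_coords}, shifting $\theta\mapsto\theta+mz$ changes the representative by an integer. So the formulas of \eqref{eq:phi01} hold mod $1$.

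The one point that needs attention is independence from the choice of $s$. Replacing $s$ by $s+kp$ changes $\phi_1$ by
\[
\frac{1}{p}\left(\frac{\tau_1}{\tau_0}-s-kp\right)-\frac{1}{p}\left(\frac{\tau_1}{\tau_0}-s\right)=-k,
\]
which is an integer. Hence the stated expression for $\phi(\ell_{\tau_1})$ mod $1$ is well defined. This well-definedness check is the only potential obstacle, and it reduces to the one-line computation above.
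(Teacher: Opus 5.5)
Your proposal is correct and is essentially the paper's own argument: the paper records this corollary as an immediate consequence of \cref{thm:K-form_on_lens}, whose final clause already gives irregularity and exactly two periodic orbits $\ell_0,\ell_1$ with the stated periods and rotation numbers when $\tau_0/\tau_1\notin\mathbb Q$. Your extra check that $\frac{1}{p}\left(\frac{\tau_1}{\tau_0}-s\right)$ changes only by an integer under $s\mapsto s+kp$ is left implicit in the paper; it confirms that the formula is well defined modulo $1$.
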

\subsection{Classification of irregular K-contact forms}
In this section we classify irregular K-contact forms. In particular, we prove \cref{thm:classification}. We start with a preliminary lemma. Let $p,q\in\mathbb Z$ be co-prime integers with $p>0$. In the proof we use the notation $\alpha\simeq\alpha'$ to indicate that two contact forms $\alpha$ and $\alpha'$ are strictly contactomorphic.
\begin{lemma}\label{lem:first_order_inv}
	Let $\alpha$ and $\alpha'$ be positive irregular $K$-contact forms on $L(p,q)$.
	Then $\alpha\simeq\alpha'$ if and only if there exist
	periodic Reeb orbits $\ell$ of $\alpha$ and $\ell'$ of $\alpha'$ having the
	same minimal period and the same rotation number.
\end{lemma}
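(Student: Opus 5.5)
The forward direction is immediate. If $\Psi^*\alpha'=\alpha$ for a diffeomorphism $\Psi$, then $\Psi_*R=R'$, so $\Psi$ conjugates the two Reeb flows. Take $\ell$ any periodic Reeb orbit of $\alpha$ and set $\ell'=\Psi(\ell)$. Then $\ell'$ is a periodic Reeb orbit of $\alpha'$ with the same minimal period. Moreover $d\Psi$ maps $\xi=\ker\alpha$ to $\xi'=\ker\alpha'$ and conjugates the linearized return maps $e^{\tau R}_*|_{\xi_q}$ and $e^{\tau R'}_*|_{\xi'_{\Psi(q)}}$. It is orientation preserving on these planes, because $\Psi^*d\alpha'=d\alpha$. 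An orientation-preserving linear conjugacy between two planar rotations forces equal rotation angles, so $\phi(\ell')=\phi(\ell)$ mod $1$. One subtlety is that the two metrics differ, so the conjugacy is not an isometry. However, both return maps are elliptic with eigenvalues $e^{\pm 2\pi i\phi}$, and an orientation-preserving conjugacy preserves the oriented rotation sense. This settles the forward direction.

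For the converse, I would reduce to \cref{lem:triple->cont}. Let $\ell,\ell'$ have common minimal period $\tau_0$ and rotation number $\phi_0$. By \cref{thm:gather_results}, each form has exactly two periodic orbits. Its toroidal atlas is built from the normal exponential map of the chosen orbit, and it gives an identification of $M$ with $L(p,q)$ through \cref{prop:charts}. In that chart the form is associated (\cref{cor:triple}) with a triple $(a,\tau_0,\phi_0)$, and for $\alpha'$ with $(b,\tau_0,\phi_0)$. The difficulty is that these identifications come from two a priori different diffeomorphisms $F,F'\colon L(p,q)\to M$.

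The key step is to arrange that both identifications use the same gluing data $(p,q,m,s)$ and the same representative $\phi_0$. The pair $p,q$ produced by \cref{lem:Phi_and_phis} is determined by $\tau_0/\tau_1$ and $\phi_0$ through $\phi_0=\frac1p(\tau_0/\tau_1-q)$. Since $p$ is fixed as the order of $\pi_1$, we get $\tau_1=\tau_0/(q+p\phi_0)$. I would therefore change the representative of $\phi_0$ modulo $1$ via the coordinate change $(\theta,z)\mapsto(\theta+kz,z)$, as in the remark after \cref{prop:Reeb_coords}, and adjust $m,s$ accordingly. The aim is that both forms sit on the same model $L(p,q)$ with the same $q$, rather than on some $q'\equiv q^{\pm1}$.

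Once this is done, pull both forms back to the same model $L(p,q)$, namely $F^*\alpha$ and $F'^*\alpha'$. These are positive contact forms associated with $(a,\tau_0,\phi_0)$ and $(b,\tau_0,\phi_0)$. \cref{lem:triple->cont} then gives $\Psi$ with $\Psi^*F'^*\alpha'=F^*\alpha$, and the map $F'\circ\Psi\circ F^{-1}$ is the desired strict contactomorphism.

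I expect the main obstacle to be this matching of integer data. The pair $(p,q)$ read off from \cref{lem:Phi_and_phis} could a priori be $(p,q')$ with $L(p,q')\cong L(p,q)$ but $q'\neq q$. I would handle it by noting that $q$ is pinned down modulo $p$ by $\phi_0$ and $\tau_1$ up to the shift of representative. The equal rotation number and period then give the same $\tau_1$, and hence the same transition map after a unimodular change of $(\theta,z)$ coordinates on the boundary torus, possibly together with $\theta\mapsto-\theta$ combined with orientation checks.
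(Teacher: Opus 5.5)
Your forward direction is correct. Your converse takes the same route as the paper, whose proof is just \cref{cor:triple} followed by \cref{lem:triple->cont} and silently ignores the matching problem you raise. Your resolution of that problem, however, fails. The step ``the equal rotation number and period then give the same $\tau_1$'' is circular: you use $\tau_1$ to pin down $q$ modulo $p$, and then use $q$ to recover $\tau_1$. It is also false. The only intrinsic combination is $q+p\phi_0=\tau_0/\tau_1$, and it is unchanged by the shift $(\phi_0,q)\mapsto(\phi_0+k,q-kp)$ coming from $(\theta,z)\mapsto(\theta+kz,z)$. So knowing $\phi_0$ only modulo $1$, as in \cref{def:rotation_num}, determines $\tau_0/\tau_1$ only modulo $p$. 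Even that holds only after $q$ modulo $p$ is known, and a priori it could be $q$ or $q^{-1}$.

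No argument can close this gap, because the ``if'' direction is false as stated. Fix $\tau_0/\tau_1\notin\mathbb Q$ and define $\tau_1'$ by $\tau_0/\tau_1'=\tau_0/\tau_1+p$. By \cref{thm:K-form_on_lens}, $\alpha(\tau_0,\tau_1)$ and $\alpha(\tau_0,\tau_1')$ are irregular K-contact forms on $L(p,q)$. Their orbits $\ell_0$ both have minimal period $\tau_0$, with rotation numbers $\frac1p(\tau_0/\tau_1-q)$ and $\frac1p(\tau_0/\tau_1-q)+1$, which are equal modulo $1$. Yet by \cref{lem:Phi_and_phis} their volumes $p\tau_0\tau_1$ and $p\tau_0\tau_1'$ differ, as do their sets of periods, so they are not strictly contactomorphic. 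For $p=1$ this is the pair of boundaries of the irrational ellipsoids $E(1,\sqrt2)$ and $E(1,2-\sqrt2)$ in $\mathbb C^2$.

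What is true, and what the proof of \cref{thm:classification} actually uses, is the version in which $\alpha$ and $\alpha'$ have the same pair of periods $\{\tau_0,\tau_1\}$. Then equality of the rotation numbers modulo $1$ gives $q_\alpha\equiv q_{\alpha'}\pmod p$ for the integers produced by \cref{lem:Phi_and_phis}. The framing shift $(\theta,z)\mapsto(\theta+kz,z)$ on $\nu(\ell_0)$ changes the gluing data $(q,m,s)$ into $(q-kp,m+ks,s)$. The shift $(\theta,z)\mapsto(\theta+jz,z)$ on $\nu(\ell_1)$ changes it into $(q,m+qj,s-pj)$. So the two gluing maps can be made literally equal, and from there your reduction to \cref{lem:triple->cont} goes through.
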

\begin{proof}
	Let $\alpha,\alpha'$ be two irregular K-contact forms on $L(p,q)$ admitting a periodic Reeb orbit of minimal period $\tau_0$ and rotation number $\phi_0$. By \cref{cor:triple} there exist two smooth functions $a,a':[0,1]\to\mathbb R$ such that, in the sense of \cref{def:triple}, $\alpha$ and $\alpha'$ are associated to triples $(a,\tau_0,\phi_0)$ and $(a',\tau_0,\phi_0)$, respectively. By \cref{lem:triple->cont}, $\alpha$ and $\alpha'$ are contactomorphic.
\end{proof}
We can now prove \cref{thm:classification}. 
\begin{proof}[Proof of \cref{thm:classification}]
	By \cref{thm:K-form_on_lens}, for any $\tau_0,\tau_1>0$ with
	$\tau_0/\tau_1 \in \mathbb{R}\setminus\mathbb{Q}$, there exists an irregular
	$K$-contact form on $L(p,q)$ with exactly two periodic Reeb orbits, denoted by
	$\ell_{\tau_0}$ and $\ell_{\tau_1}$, whose minimal periods are $\tau_0$ and $\tau_1$,
	respectively, and whose rotation numbers are
	\begin{equation}
		\phi(\ell_{\tau_0})=\frac{1}{p}\left(\frac{\tau_0}{\tau_1}-q\right),\qquad
		\phi(\ell_{\tau_1})=\frac{1}{p}\left(\frac{\tau_1}{\tau_0}-s\right),
	\end{equation}
 where $sq\equiv 1\,\mathrm{mod}\,p$. We denote such K-contact form with $\alpha(\tau_0,\tau_1)$. Exchanging the role of $\tau_0$ and $\tau_1$, we deduce that there exists another K-contact form, which is $\alpha(\tau_1,\tau_0)$ on $L(p,q)$ having exactly two periodic orbits $\bar{\ell}_{\tau_0},\bar{\ell}_{\tau_1}$ of minimal periods $\tau_0,\tau_1$ and rotation numbers 
 \begin{equation}
 	\phi(\bar{\ell}_{\tau_0})=\frac{1}{p}\left(\frac{\tau_0}{\tau_1}-s\right),\qquad \phi(\bar{\ell}_{\tau_1})=\frac{1}{p}\left(\frac{\tau_1}{\tau_0}-q\right).
 \end{equation}
We claim that 
\begin{equation}\label{eq:equiv_q_s}
	\alpha(\tau_0,\tau_1)\simeq\alpha(\tau_1,\tau_0)\iff q^2\equiv 1\,\mathrm{mod}\,p.
\end{equation} 
Indeed, by \cref{lem:first_order_inv}, $\alpha(\tau_0,\tau_1)\simeq\alpha(\tau_1,\tau_0)$ if and only if the periodic Reeb orbits of the two contact forms have identical minimal periods and rotation numbers. Since $\tau_0/\tau_1$ is irrational, it follows that $\tau_0 \neq \tau_1$. Consequently, for $\alpha(\tau_0,\tau_1)\simeq\alpha(\tau_1,\tau_0)$ to hold it is necessary, and by \cref{lem:first_order_inv} it is also sufficient, that 
\begin{equation}\label{eq:equiv_phi}
	\phi({\ell}_{\tau_0}) \equiv \phi(\bar{\ell}_{\tau_0}) \mod{1}.
\end{equation}
The latter condition is equivalent to $q\equiv s\,\mathrm{mod}\,p$, or to $q^2\equiv 1\,\mathrm{mod}\,p$.

To conclude the proof, we now show that any irregular K-contact form on $L(p,q)$ is contactomorphic to $\alpha(\tau_0,\tau_1)$, for some choice of $\tau_0,\tau_1>0$ with irrational ratio. Let $\alpha$ be a positive irregular K-contact form on $L(p,q)$. By \cref{thm:gather_results} $\alpha$ admits exactly two periodic Reeb orbits $\ell_0$, $\ell_1$ of minimal periods $\bar\tau_0,\bar\tau_1>0$ with irrational ratio. Furthermore, by \cref{lem:Phi_and_phis}, these orbits have rotation numbers 
\begin{equation}
	\phi_0=\frac{1}{p}\left(\frac{\bar\tau_0}{\bar\tau_1}-q\right),\qquad \phi_1=\frac{1}{p}\left(\frac{\bar\tau_1}{\bar\tau_0}-s\right).
\end{equation}
It follows by \cref{lem:first_order_inv} that $\alpha\simeq\alpha(\bar\tau_0,\bar\tau_1).$
\end{proof}
\section{The trace of the heat kernel}\label{sec:trace_heat}
In this section we prove \cref{prop:spectral_inv} and \cref{thm:spectral_inv}.
\subsection{Sub-Laplacian and heat kernel}
Let $(M,\alpha,g)$ be a compact contact Riemannian manifold (cf. \cref{sec:preliminaries}). The sub-Riemannian gradient of $f\in C^\infty(M)$, denoted with $\nabla^{sR} f$, is defined as the unique smooth section of $\xi=\ker\alpha$ satisfying 
\begin{equation}
	df(Y)=g(\nabla^{sR} f,Y),\qquad \forall\, Y\in\Gamma(\xi).
\end{equation}
The sub-Laplacian of $f$ is then defined as the divergence of its sub-Riemannian gradient 
\begin{equation}
	\Delta^{sR} f=\mathrm{div}(\nabla ^{sR} f),
\end{equation}
where the divergence is taken with respect to the contact volume form $\alpha\wedge d\alpha$. Such operator is hypoelliptic \cite{Hormander67}, therefore for any $\phi\in C^\infty(M)$, any solution $u$ to the equation $\Delta^{sR} u=\phi$ is smooth. For a given $\varphi\in C^{\infty}(M)$, consider the following initial value problem 
\begin{equation}\label{eq:heat_equation}
	\begin{cases}
		\partial_t \psi(t,x)=\Delta^{sR} \psi(t,x),\qquad &(t,x)\in \mathbb R^+\times M,\\
		\lim_{t\to 0}\psi(t,x)=\varphi(x),\qquad &x\in M.
	\end{cases}
\end{equation}
Problem \eqref{eq:heat_equation} has a unique smooth solution which, for any $\varphi\in C^\infty(M)$, can be written as
\begin{equation}
	\psi(t,x)=\int_M P(t,x,y)\varphi(y)\alpha\wedge d\alpha(y),\qquad (t,x)\in\mathbb R^+\times M,
\end{equation}
where the function $P\in C^\infty(\mathbb R^+\times M\times M)$ is the so called \emph{heat kernel}. See \cite[Chap.\,21]{Agrachev} and references therein for a proof of the existence of a smooth heat kernel. We are interested in the asymptotic expansion as $t\to 0$ of the trace of the heat kernel, which is
\begin{equation}
	\mathbb R^+\ni t\mapsto \int_MP(t,x,x)\alpha\wedge d\alpha(x).
\end{equation}
In \cite{Barilari2013} the first two coefficients of the expansion of $P(t,x,x)$ are computed. The main result of \cite{BenArous89} ensures that, for compact $M$, the expansion found in \cite{Barilari2013} can be integrated in $x$. The following theorem follows.
\begin{theorem}[\cite{Barilari2013,BenArous89}]\label{thm:Davide}
	Let $(M,\alpha,g)$ be a contact Riemannian manifold and let $$P:\mathbb R^+\times M\times M\to\mathbb R,$$ be the heat kernel associated to the sub-Laplacian. Then, as $t\to 0$, the following holds 
	\begin{equation}\label{eq:heat-trace}
		\int_M P(t,x,x)\alpha\wedge d\alpha(x)=\frac{1}{16t^2}\left(\int_M \alpha\wedge d\alpha+t\int_M \kappa \alpha\wedge d\alpha +O(t^2)\right),
	\end{equation}
	where $\kappa$ is the sectional curvature of $\ker\alpha$ computed with the Tanno connection \cite{Blair,Tanno89}.
\end{theorem}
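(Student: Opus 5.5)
The plan is to deduce \eqref{eq:heat-trace} from a pointwise on-diagonal expansion that holds uniformly in $x$, and then integrate it over $M$. I would proceed in three steps.

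\textbf{Step 1: pointwise expansion.} The first step is to recall the local result of \cite{Barilari2013}. For a three-dimensional contact sub-Riemannian structure $(\xi,g|_\xi)$ with a smooth volume, the heat kernel of the associated sub-Laplacian satisfies
\begin{equation}
	P(t,x,x)=\frac{1}{16t^2}\left(1+\kappa(x)t+O(t^2)\right),\qquad t\to 0.
\end{equation}
Here $\kappa$ is the first curvature invariant of the structure, and the expansion holds for each fixed $x\in M$. Two points must be checked when applying this.
\begin{itemize}
	\item \emph{Volume.} \cite{Barilari2013} uses the Popp volume. Under the compatibility condition \eqref{eq:sr_comp_metric}, $d\alpha|_\xi=\mathrm{area}_\eta$, the Popp volume coincides with $\alpha\wedge d\alpha$. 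So the sub-Laplacian defined above, as the divergence with respect to $\alpha\wedge d\alpha$, is exactly the operator treated there.
	\item \emph{Leading constant.} The constant $1/16$ comes from the heat kernel of the Heisenberg group at the origin, computed in the same normalization $d\alpha|_\xi=\mathrm{area}$. This is what fixes the prefactor $\tfrac{1}{16t^2}$ and the coefficient $1$ in front of the volume term.
\end{itemize}

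\textbf{Step 2: identifying the curvature.} Next I would identify the coefficient $\kappa$ of \cite{Barilari2013} with the sectional curvature of $\ker\alpha$ for the Tanno connection. The invariant of \cite{Barilari2013} is defined through the Agrachev--Barilari invariants $(\chi,\kappa)$ of the Reeb/horizontal frame. Writing a local orthonormal frame $X_1,X_2$ of $\xi$ and expanding the structure constants of $[X_i,X_j]$ and $[R,X_i]$, the invariant $\kappa$ is a combination of these constants. Comparing with the formula for the Tanno connection in \cite{Blair,Tanno89}, one checks that it is the Tanno sectional curvature of $\xi$.

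I expect Steps 1 and 2, the normalization and identification bookkeeping, to be the main obstacle. They involve conventions about the sign of the Laplacian, factors of $2$ in $d\alpha$ (some references use $d\alpha(X,Y)=\tfrac12(\cdots)$), and the choice of Popp versus contact volume. I would first test every convention on the Heisenberg group and on the standard $S^3$, where both sides are explicitly computable.

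\textbf{Step 3: uniformity and integration.} Finally, I would use the main result of \cite{BenArous89}. It states that on a compact manifold the small-time expansion of $P(t,x,x)$ for a hypoelliptic operator with constant rank (here contact, hence everywhere step two) holds with smooth coefficients, and with a remainder bounded by $Ct^{-2}\cdot t^2$ uniformly in $x\in M$. Uniformity follows from compactness together with the fact that the nilpotent approximation is the Heisenberg group at every point, so there are no singular points. With this in hand, the $O(t^2)$ term can be integrated against $\alpha\wedge d\alpha$ over the compact $M$, and dominated convergence gives
\begin{equation}
	\int_M P(t,x,x)\,\alpha\wedge d\alpha(x)=\frac{1}{16t^2}\left(\int_M\alpha\wedge d\alpha+t\int_M\kappa\,\alpha\wedge d\alpha+O(t^2)\right),
\end{equation}
which is \eqref{eq:heat-trace}.
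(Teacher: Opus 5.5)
Your proposal is correct and follows the same route as the paper: the pointwise on-diagonal expansion from \cite{Barilari2013}, uniformity of the remainder in $x$ (so the expansion can be integrated over compact $M$) from \cite{BenArous89}, and identification of $\kappa$ with the Tanno sectional curvature of $\ker\alpha$. The only difference is in your Step 2: the paper does not redo the frame computation but cites Lemma 13 of \cite{BarBesLer2020} for this identification.
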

In \cite{Barilari2013} $\kappa$ is presented as a local sub-Riemannian invariant, without mention of the Tanno connection. Lemma 13 of \cite{BarBesLer2020} ensures that the latter sub-Riemannian local invariant coincides with the sectional curvature of $\ker\alpha$ computed with the Tanno connection.
\subsection{Quasi regular case}
In this subsection we compute the coefficients of the expansion \eqref{eq:heat-trace} for compact K-contact Riemannian manifolds $(M,\alpha,g)$ with $\alpha$ quasi-regular. We recall that in this case the Reeb flow of $\alpha$ induces a Seifert fibration of $M$ (cf. \cref{app:siefert}). We refer the reader to \cite[Sec.\,4]{CristofaroMazz2020} for a concise treatment of Seifert fibrations induced by Reeb flows. 
\begin{proof}[Proof of \cref{prop:spectral_inv}]
	Since $\alpha$ is quasi-regular, its Reeb flow induces a Seifert fibration of $M$ (cf. \cref{prop:Seifert_Reeb}). In particular all Reeb orbits are periodic and all but finitely many of them have the same minimal period $\tau>0$. Then by \cite[Cor.\,6.3]{Geig2022} or \cite[Lemma 3.2]{AbbLangeMazz2022}, one has
	\begin{equation}\label{eq:quasi-reg-vol}
		\int_M\alpha\wedge d\alpha=-\tau e(M),
	\end{equation}
	where $e(M)$ is the Euler number of the Seifert fibration (cf. \cref{app:siefert}). 
	
	Let $M^\times$ denotes the complement in $M$ of the union of the Reeb orbits of minimal period different from $\tau$. Let $\mathcal O$ be the quotient of $M$ and let $\mathcal O^\times$ be the quotient of $M^\times$, by the Reeb flow. Note that, since the Reeb flow induces a Siefert firbation of $M$, $\mathcal O$ is a smooth orbifold whereas $\mathcal O^\times$ is a smooth surface, which is the complement of the singular points of $\mathcal O$ (cf. \cref{app:siefert}). Furthermore $M^\times$ is a trivial $S^1$-bundle over $\mathcal O^\times$:
	\begin{equation}
		M^{\times}\simeq \mathcal O^\times \times S^1.
	\end{equation}
    Therefore, there exist coordiantes $(x,t)\in \mathcal O \times S^1$ such that 
    \begin{equation}
    	R=\frac{2\pi}{\tau}\partial_t.
    \end{equation}
	Since $\mathcal L_R\alpha=0$ and $\alpha(R)=1$, there exists a smooth 1-form $\beta$ defined on $\mathcal O^\times$ such that 
	\begin{equation}
		\alpha=\frac{\tau}{2\pi}dt+\beta.
	\end{equation}
	In particular, it holds 
	\begin{equation}
		\alpha\wedge d\alpha=\frac{\tau}{2\pi}dt\wedge d\beta.
	\end{equation}
	Let $\kappa$ be the sectional curvature of $\xi$ computed with respect to the Tanno connection. Since $R$ is a Killing vector field for $g$ we have $R(\kappa)=0$, therefore in coordinates $(x,t)$ we have $\kappa=\kappa(x)$. Hence we can compute 
	\begin{equation}\label{eq:comp-curv}
		\begin{aligned}
			\int_M\kappa \alpha\wedge d\alpha&=\int_{M^\times}\kappa \alpha\wedge d\alpha\int_{\mathcal O^\times\times S^1}\kappa(x)\frac{\tau}{2\pi}dt\wedge d\beta(x)=\\
		&=\int_0^{2\pi}\frac{\tau}{2\pi}dt\int_{\mathcal O^\times}\kappa(x)d\beta(x)=\tau\int_{\mathcal O^\times}\kappa(x)d\beta(x).
	\end{aligned}
	\end{equation}
 Note that, since $g$ is invariant under the Reeb flow, $\mathcal O^\times$ inherits a natural Riemannian structure. Moreover $d\beta$ is the Riemannian area form of $\mathcal O^\times$. By \cite[Lemma 13]{BarBesLer2020} $\kappa$ is the local sub-Riemannian invariant of \cite[Remark 2.8]{ABBR2024}. Thus, by the latter remark, $\kappa$ is the Gaussian curvature of $\mathcal O^\times$. By the orbifold Gauss-Bonnet theorem \cite[Chap.\,13]{Thurston22},\cite{Satake57} we have 
 \begin{equation}
 	\int_{\mathcal O^\times}\kappa(x) d\beta(x)=2\pi\chi_{\mathrm{orb}}(\mathcal O).
 \end{equation}
Substituting the latter equation in \eqref{eq:comp-curv} we obtain 
\begin{equation}\label{eq:quasi-reg-curv}
	\int_M\kappa \alpha\wedge d\alpha=2\pi\tau\chi_{\mathrm{orb}}(\mathcal O).
\end{equation}
Combining \eqref{eq:quasi-reg-vol}, \eqref{eq:quasi-reg-curv} and \cref{thm:Davide}, we conclude the proof.
	\end{proof}
\subsection{Irregular case} In this subsection we compute the coefficients of the expansion \eqref{eq:heat-trace} for compact K-contact Riemannian manifolds $(M,\alpha,g)$ with $\alpha$ irregular. 
\begin{lemma}\label{lem:chi_seifert_Lpq}
	Let $p,q$ be coprime integers, $p>0$. Let $\alpha$ be a quasi-regular contact form on $L(p,q)$. Then there exist two natural numbers $a_0,a_1$ and $\tau>0$ such that the set of minimal periods of the periodic Reeb orbits is 
	\begin{equation}\label{eq:spetrum_Lpq}
		\left\{\tau,\frac{\tau}{a_0},\frac{\tau}{a_1}\right\}.
	\end{equation}
	 Furthermore, there exist at most two Reeb orbits of minimal period less than $\tau$, and are called the exceptional fibers. Moreover, if there are exactly two exceptional fibers, the Euler charateristic of the base orbifold of the Seifert fibration induced by the Reeb flow is 
	\begin{equation}
		\chi_{\mathrm{orb}}(\mathcal O)=\frac{1}{a_0}+\frac{1}{a_1}.
	\end{equation}
\end{lemma}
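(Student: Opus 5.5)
The plan is to rely on the Seifert structure from \cref{prop:Seifert_Reeb} and on the classical description of Seifert fibrations of lens spaces. Since $\alpha$ is quasi-regular, the Reeb flow generates a locally free $S^1$-action on $L(p,q)$. Let $\tau$ be the common minimal period of the generic orbits. Every other orbit has minimal period $\tau/a$, where $a\geq 2$ is the order of its isotropy group $\mathbb Z_a\subset S^1$. These orbits are the exceptional fibers, and by \cref{prop:Seifert_Reeb} there are only finitely many of them. So the set of minimal periods is $\{\tau\}\cup\{\tau/a_j\}$, indexed by the exceptional fibers. Claim \eqref{eq:spetrum_Lpq}, with the convention $a_j=1$ when fewer than two exceptional fibers occur, is therefore equivalent to the statement that there are at most two exceptional fibers.

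To bound the number of exceptional fibers, I would invoke the classification of Seifert fibered structures on lens spaces. The base orbifold $\mathcal O$ is a closed orientable surface $\Sigma_g$ with cone points. The fundamental group of $L(p,q)$ is finite cyclic. Moreover, $\pi_1(L(p,q))$ surjects onto the orbifold fundamental group $\pi_1^{\mathrm{orb}}(\mathcal O)$, by the long exact sequence of the fibration in the orbifold sense. Hence $\pi_1^{\mathrm{orb}}(\mathcal O)$ is finite cyclic, and this forces $g=0$. It also excludes three or more cone points: with three or more cone points the orbifold group is a nonabelian (triangle-type or larger) group, since with cone orders $\geq 2$ the group $\langle c_1,\dots,c_k\mid c_i^{a_i},\,c_1\cdots c_k\rangle$ for $k\geq3$ is never cyclic. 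So $\mathcal O=S^2(a_0,a_1)$ with at most two cone points. I expect this to be the main obstacle, because it means citing (or proving) this classical fact. Two routes are available: cite Seifert's classification, or cite \cite{CristofaroMazz2020, Geig2022}, where Reeb-induced Seifert fibrations are treated. The surjectivity of $\pi_1(M)\to\pi_1^{\mathrm{orb}}(\mathcal O)$ is standard.

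Finally, when there are exactly two exceptional fibers, $\mathcal O$ is the sphere with two cone points of orders $a_0,a_1$. The orbifold Euler characteristic formula $\chi_{\mathrm{orb}}(\mathcal O)=\chi(\Sigma_g)-\sum_j(1-1/a_j)$ then gives
\begin{equation}
\chi_{\mathrm{orb}}(\mathcal O)=2-\left(1-\frac{1}{a_0}\right)-\left(1-\frac{1}{a_1}\right)=\frac{1}{a_0}+\frac{1}{a_1}.
\end{equation}
This step is immediate once the base is identified.
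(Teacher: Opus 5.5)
Your proof is correct, but it takes a different route from the paper at the one nontrivial step.

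\textbf{How the paper argues.} The paper does not show that the base has genus zero and at most two cone points. It cites the classification of Seifert fibrations of lens spaces \cite[Sec.~4.4]{GeigesLange2018}, both for the form of the period set and for identifying the base as the football $S(a_0,a_1)$. It then does exactly your Euler characteristic computation.

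\textbf{How you argue.} You derive the restriction from the surjection in the standard exact sequence $\pi_1(S^1)\to\pi_1(L(p,q))\to\pi_1^{\mathrm{orb}}(\mathcal O)\to 1$. A finite cyclic quotient rules out $g\geq 1$, since killing the cone generators leaves the infinite group $\pi_1(\Sigma_g)$. It also rules out three or more cone points.

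\textbf{Comparison.} Your argument is more self-contained and more general: it applies to any Seifert fibration of a manifold with finite cyclic fundamental group. The citation gives more information, namely the full list of Seifert structures on $L(p,q)$ and which orders $a_0,a_1$ actually occur. The lemma does not need that extra information.

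\textbf{One inaccuracy to fix.} With three or more cone points the orbifold group need not be nonabelian: $S^2(2,2,2)$ has group $\mathbb Z_2\times\mathbb Z_2$. What is true, and what you actually use, is that the group is never cyclic. Here is a quick justification.
\begin{enumerate}
\item Such an orbifold is good, since the only bad closed orientable ones are $S^2(n)$ and $S^2(m,n)$ with $m\neq n$.
\item If it is Euclidean or hyperbolic, its group is infinite and contains torsion (the cone generators have exact orders $a_i$). Hence it is not cyclic.
\item If it is spherical, a cyclic group would act on $S^2$ by rotations about a single axis. The quotient would be $S^2(n,n)$, which has only two cone points.
\end{enumerate}
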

\begin{proof}
The fact that the set of minimal periods of periodic Reeb orbits has the form \eqref{eq:spetrum_Lpq} follows from \cref{prop:Seifert_Reeb} and \cite[Sec.\,4.4]{GeigesLange2018}. Furthermore by \cite{GeigesLange2018}, if there are exactly two exceptional fiber then the base orbifold of the Seifert fibration is the football $S(a_0,a_1)$, that is, the 2-dimensional orbifold whose underlying topological surface is the 2-sphere and has exactly two cone singularities of order $a_0,a_1\in\mathbb N$. The orbifold Euler characteristic (cf. \cref{app:siefert}) is then
\begin{equation}
	\chi_{\mathrm{orb}}(S(a_0,a_1))=\chi(S^2)-\sum_{i=0}^1\left(1-\frac{1}{a_i}\right)=2-\sum_{i=0}^1\left(1-\frac{1}{a_i}\right)=\frac{1}{a_0}+\frac{1}{a_1}.
\end{equation}
\end{proof}
\begin{lemma}\label{lem:deformation}
	Let $(M,\alpha,g)$ be a K-contact Riemannian manifold with $\alpha$ irregular and let $\ell_0,\ell_1$ be the only two periodic Reeb orbits, of minimal periods $\tau_0,\tau_1$ (cf. \cref{thm:main1}). Then there exists a sequence of smooth contact forms $\{\alpha_n\}$ and Riemannian metrics $\{g_n\}$ on $M$ such that
	\begin{itemize}
		\item[(i)] $(M,\alpha_n,g_n)$ is a K-contact Riemannian manifold for all $n\in\mathbb N$,
		\item[(ii)] $\alpha_n$ is quasi-Regular and $\ell_0,\ell_1$ are the only exceptional fibers of the Seifert fibration induced by its Reeb flow (see \cref{app:siefert}), for all $n\in\mathbb N$,
		\item[(iii)] The following limits hold true in the $C^\infty$-topology
		\begin{equation}
			\lim_{n\to\infty}\alpha_n=\alpha,\qquad \lim_{n\to\infty} g_n=g.
		\end{equation}
	Denoting with $\tau_n(\ell_i)$ the minimal period of $\ell_i$ with respect to the Reeb field of $\alpha_n$, it holds 
	\begin{equation}
		\lim_{n\to\infty}\tau_n(\ell_i)=\tau_i,\qquad i=0,1.
	\end{equation}
	\end{itemize}
\end{lemma}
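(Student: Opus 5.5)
The plan is to obtain the quasi-regular approximations by the deformation of \cref{rmk:deformation}, using the momentum map built in \cref{lem:smooth_vol}. Let $X$ be the global K-contact vector field extending $\exp^\perp_{0*}\partial_\theta$ (as in the proof of \cref{lem:smooth_vol}). Set $f=\alpha(X)$, which equals $\mu/(2\pi\tau_0)$ by \eqref{eq:mu_and_f}. Then $f$ is a K-contact momentum map with $df+\iota_Xd\alpha=0$, and it is already normalized so that $f=\alpha(X)$. For $\ve$ small, \cref{rmk:deformation} gives K-contact forms $\alpha_\ve=\alpha/(1+\ve f)$ with Reeb field $R_\ve=R+\ve X$. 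It also gives compatible $R_\ve$-invariant metrics $g_\ve$, and $\alpha_\ve\to\alpha$, $g_\ve\to g$ in $C^\infty$ as $\ve\to0$. So (i) and the first part of (iii) hold for any sequence $\ve_n\to0$, and everything reduces to choosing $\ve_n$ well.

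In the chart $\psi_0$ of \cref{thm:gather_results} we have $R=\frac{2\pi}{\tau_0}(\partial_z+\phi_0\partial_\theta)$ and $X=\partial_\theta$. Hence
\begin{equation}
R_\ve=\frac{2\pi}{\tau_0}\Big(\partial_z+\big(\phi_0+\tfrac{\ve\tau_0}{2\pi}\big)\partial_\theta\Big).
\end{equation}
Since $\partial_\theta$ vanishes on $\ell_0$ and $f|_{\ell_0}=a(0)=0$, $\ell_0$ is still a Reeb orbit of $\alpha_\ve$ with the same minimal period $\tau_0$, and its rotation number becomes $\phi_0(\ve)=\phi_0+\ve\tau_0/(2\pi)$. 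The same computation in the chart $\psi_1$ works after rewriting $\partial_\theta$ with the transition map \eqref{eq:psi_def}: $\partial_\theta$ becomes $-s\partial_\theta+p\partial_z$ up to orientation conventions. This shows that $\ell_1$ stays a periodic orbit of $R_\ve$, whose minimal period $\tau_1(\ve)$ depends continuously on $\ve$ (explicitly through $p$, $\tau_0$, $\tau_1$ and $\ve$) and satisfies $\tau_1(\ve)\to\tau_1$. This gives the second part of (iii).

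Next I would choose $\ve_n\to0$ so that $\phi_0(\ve_n)=k_n/m_n$ is rational with $m_n\ge2$. By \cref{lem:rot_determines_flow}, all Reeb orbits of $\alpha_{\ve_n}$ are then periodic, so $\alpha_n:=\alpha_{\ve_n}$ is quasi-regular and induces a Seifert fibration. Orbits near $\ell_0$ have period $m_n\tau_0$, so $\ell_0$ is an exceptional fiber. Away from $\ell_0\cup\ell_1$, $R_\ve$ is a combination with constant coefficients of two generators of the free part of the $T^2$-action of \cref{prop:T2_action}. The principal orbits of the torus action are free there, so all these Reeb orbits have the same period, and the only candidates for exceptional fibers are $\ell_0$ and $\ell_1$. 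To make $\ell_1$ exceptional as well, I need its rotation number $\phi_1(\ve_n)$, given by the analogue of \eqref{eq:vol_an_phi}, to be non-integral. Equivalently, the ratio $\tau_0/\tau_1(\ve_n)$ must be chosen so that neither $\frac1p(\tau_0/\tau_1-q)$ nor $\frac1p(\tau_1/\tau_0-s)$ is an integer.

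The main obstacle is this last arithmetic step: choosing the rational values along a sequence converging to the irrational limit so that both rotation numbers avoid integers. I would handle it by taking rationals with large prime denominators that do not divide $p$. This is possible because the set of admissible $\ve$ is dense. The relation between $\phi_0(\ve)$ and $\phi_1(\ve)$ via \cref{lem:Phi_and_phis}, applied now to the quasi-regular $\alpha_n$, shows that both then have denominator greater than $1$. Finally I would check that the minimal periods are computed correctly on $\ell_0,\ell_1$, i.e.\ that $\ell_i$ is not traversed multiply; this follows from the local form of $R_\ve$ at $r=0$ in each chart.
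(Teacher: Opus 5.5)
Your proposal is correct and uses the same deformation as the paper: your $f=\mu/(2\pi\tau_0)$ only rescales $\ve$. Where you differ is in how you verify (ii) and the limiting periods.

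The paper argues intrinsically:
\begin{itemize}
\item Since $\ell_0,\ell_1$ consist of critical points of $\mu$, one has $R_\ve|_{\ell_i}=(1+\ve\mu(\ell_i))R|_{\ell_i}$. This gives $\tau_\ve(\ell_0)=\tau_0$ and $\tau_\ve(\ell_1)=\tau_1/(1+\ve p\tau_0\tau_1)$.
\item Every other orbit lies in a regular level set of $\mu$, which is still a momentum map for the deformed structure. So by \cref{lem:R(f)=0} it has zero rotation number and is a regular fiber.
\item If $\ell_0$ or $\ell_1$ were regular, then $\tau_{\ve_n}(\ell_0)/\tau_{\ve_n}(\ell_1)$ or its reciprocal would be an integer. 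This is impossible for large $n$, because the ratio tends to the irrational $\tau_0/\tau_1$.
\end{itemize}
You instead write $R_\ve$ in the two toroidal charts, where it is a constant-coefficient linear field, and read off all periods and rotation numbers by hand. Your route is more explicit: it also produces $\phi_0(\ve)$ and $\phi_1(\ve)$, and it does not need \cref{lem:R(f)=0}. The paper's route is shorter and avoids the computation in the chart $\psi_1$. Your formulas agree with the paper's after the rescaling of $\ve$.

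Two corrections to your write-up.

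First, your \enquote{main obstacle} is not actually an obstacle. Since $\tau_0/\tau_1(\ve_n)\to\tau_0/\tau_1\notin\mathbb Q$, neither this ratio nor its reciprocal is an integer for $n$ large. The formulas $\phi_0(\ve)=\frac1p(\tau_0/\tau_1(\ve)-q)$ and $\phi_1(\ve)=\frac1p(\tau_1(\ve)/\tau_0-s)$ then show that neither rotation number is an integer. This holds for any choice of $\ve_n\to0$ making the ratio rational, so the prime-denominator device is unnecessary; this is exactly the paper's argument.

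Second, you cannot invoke \cref{lem:Phi_and_phis} for the quasi-regular $\alpha_n$, because its proof uses density of the $\vec R$-orbits. The relations you need follow instead from your own chart formulas. For example, $\tau_0/\tau_1(\ve)=\tau_0/\tau_1+\ve p\tau_0/(2\pi)$ gives $\phi_0(\ve)=\frac1p(\tau_0/\tau_1(\ve)-q)$. Alternatively they follow from \cref{lem:a_0a_1}, since $\alpha_\ve$ is associated with the triple $(a/(1+\ve a),\tau_0,\phi_0(\ve))$.
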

\begin{proof}
	Let $\delta:M\to\mathbb R$ be the distance function from $\ell_0$ computed with respect to the metric $g$. Let $\mu:M\to\mathbb R$ be the volume function 
	\begin{equation}
		\mu(x)=\int_{\delta<\delta(x)}\alpha\wedge d\alpha,\qquad x\in M.
	\end{equation}
	According to \cref{lem:smooth_vol}, $\mu$ is a smooth K-contact momentum map. For $\ve\in \mathbb R^+$, we define the following 1-form 
	\begin{equation}
		\alpha_\ve=\frac{1}{1+\ve \mu}\alpha.
	\end{equation}
 	Since $\mu$ is a smooth K-contact momentum map, according to \cref{rmk:deformation}, $\alpha_\ve$ is a K-contact form and there exists a Reeb invariant compatible metric $g_\ve$ such that, in the $C^\infty$-topology
 	\begin{equation}\label{eq:lim_alpha_g}
 		\lim_{\ve\to 0}\alpha_\ve=\alpha,\qquad \lim_{\ve\to 0}g_\ve=g.
 	\end{equation}
 	We now compute the differential of $\alpha_\ve$
 	\begin{equation}
 		d\alpha_\ve=-\frac{\ve d\mu}{(1+\ve\mu)^2}\wedge \alpha+\frac{1}{1+\ve\mu}d\alpha.
 	\end{equation}
 	According to \cref{rmk:mu_01}, $\ell_0$ and $\ell_1$ are both composed of critical points of $\mu$, therefore 
 	\begin{equation}
 		d\alpha_\ve|_{\ell_i}=\frac{1}{1+\ve\mu(\ell_i)}d\alpha|_{\ell_i},\qquad \alpha_{\ve}|_{\ell_i}=\frac{1}{1+\ve\mu({\ell_i})}\alpha|_{\ell_i},\qquad i=0,1.
 	\end{equation}
 	Since the Reeb field $R_\ve$ of the form $\alpha_\ve$ is defined by 
 	\begin{equation}
 		d\alpha_\ve(R_\ve,\cdot)=0,\qquad \alpha_\ve(R_\ve)=1,
 	\end{equation}
 	we deduce that 
 	\begin{equation}
 		{R_\ve}|_{\ell_i}=(1+\ve\mu({\ell_i}))R|_{\ell_i},\qquad i=1,2,
 	\end{equation}
 	where $R$ is the Reeb field of $\alpha$. Hence $\ell_i$ are both orbits of the perturbed Reeb field $R_\ve$, which, along them, is a constant rescaling of $R$. In particular 
 	\begin{equation}\label{eq:perturbed_periods}
 		\tau_\ve(\ell_0)=\frac{\tau_0}{1+\ve\mu({\ell_0})}=\tau_0,\qquad \tau_\ve(\ell_1)=\frac{\tau_1}{1+\ve\mu({\ell_1})}=\frac{\tau_1}{1+\ve p\tau_0\tau_1},
 	\end{equation}
 	where in the latter equality we have used the fact that 
 	\begin{equation}
 		\mu(\ell_0)=0,\qquad \mu(\ell_1)=\mathrm{vol}(M)=p\tau_0\tau_1,
 	\end{equation}
 	obtained from \cref{rmk:mu_01} and \cref{lem:Phi_and_phis}. Thus, there exists a sequence ${\ve_n}$ of positive real numbers such that 
 	\begin{equation}
 		\lim_{n\to\infty} \ve_n=0,\qquad \frac{\tau_{\ve_n}(\ell_0)}{\tau_{\ve_n}(\ell_1)}\in\mathbb Q,\qquad \forall\,n\in\mathbb N.
 	\end{equation}
  We now prove that the sequences $\{\alpha_{\ve_n}\}, \{g_{\ve_n}\}$ of forms and metrics enjoy the desired properties. 
  \newline\newline\noindent
  $(i)$ We have already proved that $\alpha_{\ve_n}$ is K-contact and $g_{\ve_n}$ is Reeb invariant under the Reeb flow of $\alpha_{\ve_n}$ and compatible with it.
  \newline\newline\noindent
  $(ii)$ Since $\tau_{\ve_n}(\ell_0)/\tau_{\ve_n}(\ell_1)\in\mathbb Q$, by \cref{thm:main1} $\alpha_{\ve_n}$ cannot be irregular, and it is therefore quasi-regular. Let $\ell\subset M$ be a (periodic) Reeb orbit for $\alpha_{\ve_n}$ different from $\ell_0,\ell_1$. By \cref{rmk:deformation}, $\mu$ is a K-contact momentum map for $(M,\alpha_{\ve_n},g_{\ve_n})$, and, by \cref{rmk:mu_01}, $\ell$ is contained in the regular set of $\mu$ (i.e. $d\mu|_\ell\neq 0$) . Thus by \cref{lem:R(f)=0}, $\ell$ has zero rotation number. Consequently, by \cref{lem:rot_determines_flow}, $\ell$ has an open neighborhood where all Reeb orbits have its same period. Therefore, according to \cref{lem:chi_seifert_Lpq}, $\ell$ is not an exceptional fiber (recall that $M$ is diffeomorphic to a lens space by \cref{thm:main1}). Since $\tau_0/\tau_1$ is irrational, by \eqref{eq:perturbed_periods}, we can assume that nor $\tau_{\ve_n}(\ell_0)/\tau_{\ve_n}(\ell_1)$, nor its reciprocal $\tau_{\ve_n}(\ell_1)/\tau_{\ve_n}(\ell_0)$, are natural numbers for $n$ big enough. If one among $\ell_0$ and $\ell_1$ were not exceptional, than at least one of the aforementioned ratios would be a natural number (cf. \cref{lem:chi_seifert_Lpq}). We deduce that both $\ell_0$ and $\ell_1$ are exceptional.
   \newline\newline\noindent
  $(iii)$ This follows by \eqref{eq:lim_alpha_g} and \eqref{eq:perturbed_periods}.
\end{proof}
We can now prove \cref{thm:spectral_inv}.
\begin{proof}[Proof of \cref{thm:spectral_inv}]
		We compute both coefficients appearing in the expansion \eqref{eq:heat-trace}. Concerning the total volume, by \cref{lem:Phi_and_phis} it holds 
		\begin{equation}\label{eq:total_vol_irreg}
			\int_M\alpha\wedge d\alpha=p\tau_0\tau_1.
		\end{equation}
	Let $\alpha_n,g_n$ be the sequence of contact forms and Riemannian metrics of \cref{lem:deformation}.  In particular $\alpha_n$ is quasi-regular and $g_n$ is compatible with $\alpha_n$ and invariant under its Reeb flow. Let $\kappa_n$ be the sectional curvature of $\ker\alpha_n$ computed with respect to the Tanno connection, then, combining \cref{thm:Davide}, \cref{prop:spectral_inv} and \cref{lem:chi_seifert_Lpq}, we find that 
	\begin{equation}\label{eq:total_k_n}
			\int_M\kappa_n\alpha_n\wedge d\alpha_n=2\pi(\tau_n(\ell_0)+\tau_n(\ell_1)).
	\end{equation}
 Since $\alpha_n\to \alpha$ and $g_n\to g$ in the $C^\infty$-topology, then $\kappa_n\alpha_n\wedge d\alpha_n\to \kappa \alpha\wedge d\alpha$. Therefore, taking the limit of both sides of \eqref{eq:total_k_n}, thanks to point $(iii)$ of \cref{lem:deformation} we obtain 
 \begin{equation}\label{eq:total_k_irreg}
 	\int_M\kappa\alpha\wedge d\alpha=2\pi(\tau_0+\tau_1).
 \end{equation}
 Substituting \eqref{eq:total_vol_irreg} and \eqref{eq:total_k_irreg} in \eqref{eq:heat-trace}, we conclude the proof.
\end{proof}
\newpage
\appendix
\section{Seifert fibrations}\label{app:siefert}

We briefly recall some basic facts concerning Seifert fibrations, with particular emphasis on those induced by Reeb flows. 
The notion of Seifert fibration is much more general than the one introduced here (see \cite{Orlik}).

A Seifert fibration of a closed oriented three-manifold is a decomposition of $M$ into circles determined by a locally free circle action.

\begin{defi}
	Let $M$ be a closed oriented $3$-manifold with an effective locally free $S^1$-action, 
	that is, an effective $S^1$-action such that all isotropy groups (stabilizers of points) are finite. 
	The quotient map
	\begin{equation}
		\pi \colon M \to \mathcal O := M/S^1
	\end{equation}
	is called a Seifert fibration of $M$.
\end{defi}

The \emph{fibers} of a Seifert fibration are the fibers of $\pi$, which are precisely the orbits of the $S^1$-action. 
One can show that the set of points with nontrivial isotropy group is a finite union of fibers;
these are called the \emph{exceptional fibers}. 
Let $\ell$ be one of the latter. 
Since the finite subgroups of $S^1$ are necessarily cyclic, there exists $a \in \mathbb N$ such that the isotropy group of each point in $\ell$ is isomorphic to $\mathbb Z_a$. 
We call $a$ the \emph{order} of the exceptional fiber $\ell$. 
Note that in the absence of exceptional fibers a Seifert fibration reduces to a principal $S^1$-bundle.

The quotient $\mathcal O$ is a 2-dimensional \emph{orbifold}. 
It is a topological surface, smooth outside of a finite collection of points. 
In our case, these points are precisely the projections of the exceptional fibers and are cone singularities whose orders coincide with the orders of the corresponding exceptional fibers. 
The \emph{orbifold Euler characteristic} of $\mathcal O$ is defined as
\begin{equation}
	\chi_{\mathrm{orb}}(\mathcal O)
	=
	\chi(\mathcal O)
	-
	\sum_{i=1}^n \left(1 - \frac{1}{a_i}\right),
\end{equation}
where $\chi(\mathcal O)$ denotes the Euler characteristic of the underlying topological surface and $a_1,\dots,a_n$ are the orders of the cone singularities. 
Two Seifert fibrations are called \emph{isomorphic} if there exists a diffeomorphism sending the fibers of the first fibration to those of the second. 
The orbifold Euler characteristic $\chi_{\mathrm{orb}}(\mathcal O)$ is a Seifert invariant, i.e. invariant under isomorphism.

Another invariant of a Seifert fibration is the \emph{Seifert Euler number}, which generalizes the Euler number of a principal $S^1$-bundle. 
In particular, if there are no exceptional fibers, the Seifert fibration is a principal $S^1$-bundle and its Seifert Euler number coincides with the classical one. 
For a detailed definition of this invariant and for a complete description of Seifert invariants, we refer the reader to \cite{Orlik,Hatcher2002}.

The relevance of Seifert fibrations for our analysis stems from Wadsley's theorem \cite{Wadsley75}, which implies that the Reeb flow of a quasi-regular contact form induces a Seifert fibration. 
As a consequence, we have the following proposition (see, for instance, \cite[Sec.~4]{CristofaroMazz2020}).

\begin{prop}\label{prop:Seifert_Reeb}
	Let $(M,\alpha)$ be a closed three-manifold with a quasi-regular contact form. 
	Then the Reeb flow induces a Seifert fibration of $M$. 
	In particular, all but finitely many Reeb orbits have the same minimal period $\tau > 0$. 
	The remaining ones are the exceptional fibers of the Seifert fibration and have minimal periods $\tau/a_1,\dots,\tau/a_n$, where $a_i$ is the order of the $i$-th exceptional fiber.
\end{prop}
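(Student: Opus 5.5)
This is a direct consequence of Wadsley's theorem \cite{Wadsley75}. The plan is to show that the Reeb flow of a quasi-regular form defines a locally free, effective circle action. Once that is done, the statements about exceptional fibers and periods follow from the definition of a Seifert fibration.

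The key steps, in order, are the following.

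First, use quasi-regularity: every Reeb orbit is periodic, so the flow $\{e^{tR}\}$ has all orbits closed. Wadsley's theorem then applies to this nonsingular flow with all orbits closed. It gives a Riemannian metric for which the orbits are geodesics, and hence, in dimension three, a metric for which $R$ generates a periodic flow. The key consequence I would extract is that the minimal period function $x\mapsto\tau(x)$ is bounded on $M$.

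Second, using $\mathcal L_R\alpha=0$ and boundedness, I would argue that $\tau(x)$ is lower semicontinuous and locally takes the form $\tau_{\mathrm{gen}}/k$ with $k\in\mathbb N$. This comes from Epstein--Wadsley local structure: near an orbit of period $\tau(\ell)$, the return map on a transversal disc is a periodic diffeomorphism of some finite order $a$. Nearby orbits therefore have period $a\tau(\ell)$, except those on the fixed set.

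Third, set $\tau:=\max_M\tau(x)$, which exists by boundedness. Then $e^{\tau R}=\mathrm{Id}$, so $t\mapsto e^{2\pi t R/\tau}$ defines an $S^1$-action. This action is effective and locally free, since $R\neq0$ and stabilizers are discrete in $S^1$, hence finite. The quotient map is then a Seifert fibration by the definition in this appendix.

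Finally, the points with nontrivial isotropy form finitely many fibers; by the local model, in dimension three the fixed set of a finite-order rotation of a disc is isolated. On a fiber with isotropy $\mathbb Z_{a_i}$, the minimal period is $\tau/a_i$, and all other orbits have period $\tau$.

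The main obstacle is the boundedness and local-structure step, which is exactly the content of Wadsley's and Epstein's theorems. I would cite them rather than reprove them, as the paper does via \cite[Sec.~4]{CristofaroMazz2020}.
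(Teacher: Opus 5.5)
Your strategy is the paper's: the paper gives no argument beyond invoking Wadsley's theorem (via Cristofaro-Gardiner--Mazzucchelli, Sec.~4). However, you use Wadsley with the wrong input and extract the wrong output. As a result, the one step where the Reeb condition matters is left unjustified.

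You do not need Wadsley to produce geodesibility. For a compatible metric one has $g(\nabla_RR,\cdot)=\iota_Rd\alpha=0$, so Reeb orbits are unit-speed geodesics. With this input, Wadsley's theorem says directly that the Reeb flow itself is periodic, $e^{TR}=\mathrm{Id}$ for some $T>0$, and your last two steps then finish the proof. By contrast, boundedness of $\tau(\cdot)$, or a metric in which the orbits are merely unparametrized geodesics, says nothing about how the flow is parametrized. (In dimension three boundedness holds for every flow with closed orbits, by Epstein, with no contact input.) The parametrization is exactly what the statement is about.

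The concrete gap is the inference ``the return map on a transversal has order $a$, therefore nearby orbits have period $a\tau(\ell)$''. A finite-order return map only says that nearby orbits close up after $a$ crossings, so their periods are close to $a\tau(\ell)$, not equal to it. For example, let $X$ generate a free circle action and let $f>0$ be constant on orbits but not globally constant. Then $fX$ has closed orbits, bounded periods and trivial return maps, yet its periods vary continuously and $e^{\tau fX}\neq\mathrm{Id}$ for $\tau=\max\tau(\cdot)$.

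The missing ingredient is the action argument. For $y$ in the transversal $D$, let $T(y)$ be the $a$-th return time. Let $c$ be a path in $D$ from $x_0\in\ell$ to $x$, and put $F(s,t)=e^{tT(c(s))R}(c(s))$ on $[0,1]^2$.
\begin{itemize}
\item Since $F(s,0)=F(s,1)$, $F$ is a cylinder bounded by the $a$-fold cover of $\ell$ and the orbit of $x$.
\item Since $\partial_tF$ is parallel to $R$ and $\iota_Rd\alpha=0$, we get $F^*d\alpha=0$. (This is what your $\mathcal L_R\alpha=0$ amounts to, as $\alpha(R)=1$.)
\item Stokes then gives $T(x)=\int_{F(1,\cdot)}\alpha=\int_{F(0,\cdot)}\alpha=a\tau(\ell)$.
\end{itemize}
Only after this is $\tau(\cdot)$ locally constant off a discrete, hence finite, set of orbits. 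Connectedness of $M$ minus finitely many circles then makes the generic period a single constant, so the maximum exists. Boundedness of a lower semicontinuous function gives only a supremum.

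Two minor slips. The circle action should be $t\mapsto e^{t\tau R/2\pi}$ on $\mathbb R/2\pi\mathbb Z$. Effectiveness comes from $\tau$ being the minimal period of the generic orbits, not from $R\neq0$.
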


\bibliographystyle{alphaabbr}
	\bibliography{bib_rudimetal.bib}

@article {CristofaroMazz2020,
	AUTHOR = {Cristofaro-Gardiner, Daniel and Mazzucchelli, Marco},
	TITLE = {The action spectrum characterizes closed contact 3-manifolds
		all of whose {R}eeb orbits are closed},
	JOURNAL = {Comment. Math. Helv.},
	FJOURNAL = {Commentarii Mathematici Helvetici. A Journal of the Swiss
		Mathematical Society},
	VOLUME = {95},
	YEAR = {2020},
	NUMBER = {3},
	PAGES = {461--481},
	ISSN = {0010-2571,1420-8946},
	MRCLASS = {53D10},
	MRNUMBER = {4152621},
	MRREVIEWER = {Ruishi\ Kuwabara},
	DOI = {10.4171/CMH/493},
	URL = {https://doi.org/10.4171/CMH/493},
}

@book {Orlik,
	AUTHOR = {Orlik, Peter},
	TITLE = {Seifert manifolds},
	SERIES = {Lecture Notes in Mathematics, Vol. 291},
	PUBLISHER = {Springer-Verlag, Berlin-New York},
	YEAR = {1972},
	PAGES = {viii+155},
	MRCLASS = {57E15},
	MRNUMBER = {426001},
	MRREVIEWER = {Richard\ Randell},
}

@misc{Hatcher2002,
	author       = {Allen Hatcher},
	title        = {Notes on Basic 3--Manifold Topology},
	note         = {Unpublished lecture notes},
	year         = {2002},
	howpublished = {\url{https://pi.math.cornell.edu/~hatcher/3M/3M.pdf}}
}

@article {Boothby,
	AUTHOR = {Boothby, W. M. and Wang, H. C.},
	TITLE = {On contact manifolds},
	JOURNAL = {Annals of Mathematics},
	FJOURNAL = {},
	VOLUME = {68},
	YEAR = {1958},
	NUMBER = {3},
	PAGES = {721--734},
}

@book{BoyerGalicki2008,
	author    = {Charles P. Boyer and Krzysztof Galicki},
	title     = {Sasakian Geometry},
	series    = {Oxford Mathematical Monographs},
	publisher = {Oxford University Press},
	address   = {Oxford},
	year      = {2008},
	isbn      = {978-0-19-856495-9}
}

@book {DoCarmo,
	AUTHOR = {Manfredo P. do Carmo},
	TITLE = {Riemannian Geometry},
	SERIES = {Mathematics: Theory and Applications},
	VOLUME = {},
	PUBLISHER = {Birkhäuser Boston, MA},
	YEAR = {1992},
}

@article {Massot,
	AUTHOR = {Etnyre, J.B. and Komendarczyk, R. and Massot, P.},
	TITLE = {Tightness in contact metric 3-manifolds.},
	JOURNAL = {Invent. math.},
	FJOURNAL = {},
	VOLUME = {188},
	YEAR = {2012},
	NUMBER = {2},
	PAGES = {621--657},
	
}

@book {Blair,
	AUTHOR = {Blair, D.E.},
	TITLE = {Riemannian Geometry of Contact and Symplectic Manifolds.},
	SERIES = {Progress in Mathematics},
	VOLUME = {203},
	PUBLISHER = {Birkhäuser Boston},
	YEAR = {2010},
}

@book {Agrachev,
	AUTHOR = {Agrachev, Andrei and Barilari, Davide and Boscain, Ugo},
	TITLE = {A comprehensive introduction to sub-{R}iemannian geometry},
	SERIES = {Cambridge Studies in Advanced Mathematics},
	VOLUME = {181},
	NOTE = {From the Hamiltonian viewpoint,
	With an appendix by Igor Zelenko},
	PUBLISHER = {Cambridge University Press, Cambridge},
	YEAR = {2020},
	
}

@article {ABBR2024,
	AUTHOR = {Agrachev, Andrei A and Baranzini, Stefano and Bellini, Eugenio
	and Rizzi, Luca},
	TITLE = {Quantitative tightness for three-dimensional contact
	manifolds: a sub-{R}iemannian approach},
	JOURNAL = {Nonlinearity},
	FJOURNAL = {Nonlinearity},
	VOLUME = {38},
	YEAR = {2025},
	NUMBER = {11},
	PAGES = {Paper No. 115011, 62},
	ISSN = {0951-7715,1361-6544},
	MRCLASS = {53D10 (53C17 57K33)},
	MRNUMBER = {4989350},
	DOI = {10.1088/1361-6544/ae19be},
	URL = {https}}

@article {Wadsley75,
	AUTHOR = {Wadsley, A. W.},
	TITLE = {Geodesic foliations by circles},
	JOURNAL = {J. Differential Geometry},
	FJOURNAL = {Journal of Differential Geometry},
	VOLUME = {10},
	YEAR = {1975},
	NUMBER = {4},
	PAGES = {541--549},
	ISSN = {0022-040X,1945-743X},
	MRCLASS = {57D30},
	MRNUMBER = {400257},
	MRREVIEWER = {Bruce\ L.\ Reinhart},
	URL = {http://projecteuclid.org/euclid.jdg/1214433160},
}

@article {Sullivan78,
	AUTHOR = {Sullivan, Dennis},
	TITLE = {A foliation of geodesics is characterized by having no
		``tangent homologies''},
	JOURNAL = {J. Pure Appl. Algebra},
	FJOURNAL = {Journal of Pure and Applied Algebra},
	VOLUME = {13},
	YEAR = {1978},
	NUMBER = {1},
	PAGES = {101--104},
	ISSN = {0022-4049,1873-1376},
	MRCLASS = {57R30},
	MRNUMBER = {508734},
	MRREVIEWER = {Robert\ Roussarie},
	DOI = {10.1016/0022-4049(78)90046-4},
	URL = {https://doi.org/10.1016/0022-4049(78)90046-4},
}

@article {CrisHutc2016,
	AUTHOR = {Cristofaro-Gardiner, Daniel and Hutchings, Michael},
	TITLE = {From one {R}eeb orbit to two},
	JOURNAL = {J. Differential Geom.},
	FJOURNAL = {Journal of Differential Geometry},
	VOLUME = {102},
	YEAR = {2016},
	NUMBER = {1},
	PAGES = {25--36},
	ISSN = {0022-040X,1945-743X},
	MRCLASS = {53D10 (53D42)},
	MRNUMBER = {3447085},
	MRREVIEWER = {Cecilia\ Karlsson},
	URL = {http://projecteuclid.org/euclid.jdg/1452002876},
}

@article {GeigesLange2018,
	AUTHOR = {Geiges, Hansj\"{o}rg and Lange, Christian},
	TITLE = {Seifert fibrations of lens spaces},
	JOURNAL = {Abh. Math. Semin. Univ. Hambg.},
	FJOURNAL = {Abhandlungen aus dem Mathematischen Seminar der
	Universit\"{a}t Hamburg},
	VOLUME = {88},
	YEAR = {2018},
	NUMBER = {1},
	PAGES = {1--22},
	ISSN = {0025-5858,1865-8784},
	MRCLASS = {57M50 (55R65 57M10 57M60)},
	MRNUMBER = {3785783},
	MRREVIEWER = {Daniel\ Ruberman},
	DOI = {10.1007/s12188-017-0188-z},
	URL = {https://doi.org/10.1007/s12188-017-0188-z},
}

@article {2orbits23,
	AUTHOR = {Cristofaro-Gardiner, Daniel and Hryniewicz, Umberto and
	Hutchings, Michael and Liu, Hui},
	TITLE = {Contact three-manifolds with exactly two simple {R}eeb orbits},
	JOURNAL = {Geom. Topol.},
	FJOURNAL = {Geometry \& Topology},
	VOLUME = {27},
	YEAR = {2023},
	NUMBER = {9},
	PAGES = {3801--3831},
	ISSN = {1465-3060,1364-0380},
	MRCLASS = {53D42 (37J55 53E50)},
	MRNUMBER = {4674840},
	MRREVIEWER = {Alexander\ Fel\cprime shtyn},
	DOI = {10.2140/gt.2023.27.3801},
	URL = {https://doi.org/10.2140/gt.2023.27.3801},
}

@article {BrokenBook23,
	AUTHOR = {Colin, Vincent and Dehornoy, Pierre and Rechtman, Ana},
	TITLE = {On the existence of supporting broken book decompositions for
	contact forms in dimension 3},
	JOURNAL = {Invent. Math.},
	FJOURNAL = {Inventiones Mathematicae},
	VOLUME = {231},
	YEAR = {2023},
	NUMBER = {3},
	PAGES = {1489--1539},
	ISSN = {0020-9910,1432-1297},
	MRCLASS = {57K33 (37C27 37C35 53E50)},
	MRNUMBER = {4549092},
	MRREVIEWER = {Rostislav\ Matveyev},
	DOI = {10.1007/s00222-022-01160-7},
	URL = {https://doi.org/10.1007/s00222-022-01160-7},
}

@article {TorsionForms2019,
	AUTHOR = {Cristofaro-Gardiner, Dan and Hutchings, Michael and
	Pomerleano, Daniel},
	TITLE = {Torsion contact forms in three dimensions have two or
	infinitely many {R}eeb orbits},
	JOURNAL = {Geom. Topol.},
	FJOURNAL = {Geometry \& Topology},
	VOLUME = {23},
	YEAR = {2019},
	NUMBER = {7},
	PAGES = {3601--3645},
	ISSN = {1465-3060,1364-0380},
	MRCLASS = {53D10 (53D42)},
	MRNUMBER = {4047649},
	MRREVIEWER = {Georgios\ Dimitroglou Rizell},
	DOI = {10.2140/gt.2019.23.3601},
	URL = {https://doi.org/10.2140/gt.2019.23.3601},
}

@article {AlbersGeigesKai2022,
	AUTHOR = {Albers, Peter and Geiges, Hansj\"{o}rg and Zehmisch, Kai},
	TITLE = {Pseudorotations of the 2-disc and {R}eeb flows on the
	3-sphere},
	JOURNAL = {Ergodic Theory Dynam. Systems},
	FJOURNAL = {Ergodic Theory and Dynamical Systems},
	VOLUME = {42},
	YEAR = {2022},
	NUMBER = {2},
	PAGES = {402--436},
	ISSN = {0143-3857,1469-4417},
	MRCLASS = {37E30 (37J55 53D35)},
	MRNUMBER = {4362897},
	MRREVIEWER = {Frans\ Cantrijn},
	DOI = {10.1017/etds.2021.15},
	URL = {https://doi.org/10.1017/etds.2021.15},
}

@incollection {Katok24,
	AUTHOR = {Katok, A. B.},
	TITLE = {Ergodic perturbations of degenerate integrable {H}amiltonian
	systems},
	BOOKTITLE = {The collected works of {A}natole {K}atok. {V}ol. 1},
	PAGES = {613--649},
	NOTE = {Reprint of English translation of [0331425]},
	PUBLISHER = {World Sci. Publishing, Singapore},
	YEAR = {[2024] \copyright 2024},
	ISBN = {[9789811237751]; [9789811237768]; [9789811238062]},
	MRCLASS = {37J40 (28D99)},
	MRNUMBER = {4893880},
}

@incollection {FayadKatok24,
	AUTHOR = {Fayad, Bassam and Katok, Anatole},
	TITLE = {Constructions in elliptic dynamics},
	BOOKTITLE = {The collected works of {A}natole {K}atok. {V}ol. 1},
	PAGES = {825--873},
	NOTE = {Corrected reprint of [2104594]},
	PUBLISHER = {World Sci. Publishing, Singapore},
	YEAR = {[2024] \copyright 2024},
	ISBN = {[9789811237751]; [9789811237768]; [9789811238062]},
	MRCLASS = {37C05 (37C40 37E30)},
	MRNUMBER = {4893886},
}

@article {Geig2022,
	AUTHOR = {Geiges, Hansj\"{o}rg},
	TITLE = {What does a vector field know about volume?},
	JOURNAL = {J. Fixed Point Theory Appl.},
	FJOURNAL = {Journal of Fixed Point Theory and Applications},
	VOLUME = {24},
	YEAR = {2022},
	NUMBER = {2},
	PAGES = {Paper No. 23, 26},
	ISSN = {1661-7738,1661-7746},
	MRCLASS = {57R30 (37J55 53C22 53D35 57R25 58A10)},
	MRNUMBER = {4403698},
	MRREVIEWER = {Shigenori\ Matsumoto},
	DOI = {10.1007/s11784-022-00946-9},
	URL = {https://doi.org/10.1007/s11784-022-00946-9},
}

@article {AbbLangeMazz2022,
	AUTHOR = {Abbondandolo, Alberto and Lange, Christian and Mazzucchelli,
	Marco},
	TITLE = {Higher systolic inequalities for 3-dimensional contact
	manifolds},
	JOURNAL = {J. \'{E}c. polytech. Math.},
	FJOURNAL = {Journal de l'\'{E}cole polytechnique. Math\'{e}matiques},
	VOLUME = {9},
	YEAR = {2022},
	PAGES = {807--851},
	ISSN = {2429-7100,2270-518X},
	DOI = {10.5802/jep.195},
	URL = {https://doi.org/10.5802/jep.195},
}

@article {KegelLange2021,
	AUTHOR = {Kegel, Marc and Lange, Christian},
	TITLE = {A {B}oothby-{W}ang theorem for {B}esse contact manifolds},
	JOURNAL = {Arnold Math. J.},
	FJOURNAL = {Arnold Mathematical Journal},
	VOLUME = {7},
	YEAR = {2021},
	NUMBER = {2},
	PAGES = {225--241},
	ISSN = {2199-6792,2199-6806},
	MRCLASS = {57R18 (53D35 57R17)},
	MRNUMBER = {4260074},
	MRREVIEWER = {Philippe\ Rukimbira},
	DOI = {10.1007/s40598-020-00165-5},
	URL = {https://doi.org/10.1007/s40598-020-00165-5},
}

@article {Hormander67,
	AUTHOR = {Hormander, Lars},
	TITLE = {Hypoelliptic second order differential equations},
	JOURNAL = {Acta Math.},
	FJOURNAL = {Acta Mathematica},
	VOLUME = {119},
	YEAR = {1967},
	PAGES = {147--171},
	ISSN = {0001-5962,1871-2509},
	MRCLASS = {35.48 (47.00)},
	MRNUMBER = {222474},
	MRREVIEWER = {Joel\ Smoller},
	DOI = {10.1007/BF02392081},
	URL = {https://doi.org/10.1007/BF02392081},
}

@article {Tanno89,
	AUTHOR = {Tanno, Shukichi},
	TITLE = {Variational problems on contact {R}iemannian manifolds},
	JOURNAL = {Trans. Amer. Math. Soc.},
	FJOURNAL = {Transactions of the American Mathematical Society},
	VOLUME = {314},
	YEAR = {1989},
	NUMBER = {1},
	PAGES = {349--379},
	ISSN = {0002-9947,1088-6850},
	MRCLASS = {53C15 (32F25 58G30)},
	MRNUMBER = {1000553},
	MRREVIEWER = {Gerhard\ Huisken},
	DOI = {10.2307/2001446},
	URL = {https://doi.org/10.2307/2001446},
}

@article {Barilari2013,
	AUTHOR = {Barilari, D.},
	TITLE = {Trace heat kernel asymptotics in 3{D} contact sub-{R}iemannian
	geometry},
	NOTE = {Translation of Sovrem. Mat. Prilozh. No. 82 (2012)},
	JOURNAL = {J. Math. Sci. (N.Y.)},
	FJOURNAL = {Journal of Mathematical Sciences (New York)},
	VOLUME = {195},
	YEAR = {2013},
	NUMBER = {3},
	PAGES = {391--411},
	ISSN = {1072-3374,1573-8795},
	MRCLASS = {58J35 (35K08 35R01 53C17)},
	MRNUMBER = {3207127},
	MRREVIEWER = {Jing\ Wang},
	DOI = {10.1007/s10958-013-1585-1},
	URL = {https://doi.org/10.1007/s10958-013-1585-1},
}

@article {ClassicalQuantum23,
	AUTHOR = {Colin de Verdi\`ere, Yves},
	TITLE = {Classical and quantum mechanics on 3{D} contact manifolds},
	JOURNAL = {Pure Appl. Math. Q.},
	FJOURNAL = {Pure and Applied Mathematics Quarterly},
	VOLUME = {19},
	YEAR = {2023},
	NUMBER = {4},
	PAGES = {1839--1852},
	ISSN = {1558-8599,1558-8602},
	MRCLASS = {58J50 (53C17 53D10)},
	MRNUMBER = {4671383},
	DOI = {10.4310/pamq.2023.v19.n4.a5},
	URL = {https://doi.org/10.4310/pamq.2023.v19.n4.a5},
}

@article {Schwartz63,
	AUTHOR = {Schwartz, Arthur J.},
	TITLE = {A generalization of a {P}oincar\'{e}-{B}endixson theorem to
	closed two-dimensional manifolds},
	JOURNAL = {Amer. J. Math.},
	FJOURNAL = {American Journal of Mathematics},
	VOLUME = {85},
	YEAR = {1963},
	PAGES = {453--458; errata: 85 (1963), 753},
	ISSN = {0002-9327,1080-6377},
	MRCLASS = {34.65 (57.48)},
	MRNUMBER = {155061},
	MRREVIEWER = {Bruce\ L.\ Reinhart},
}

@article {Satake57,
	AUTHOR = {Satake, Ichir\^{o}},
	TITLE = {The {G}auss-{B}onnet theorem for {$V$}-manifolds},
	JOURNAL = {J. Math. Soc. Japan},
	FJOURNAL = {Journal of the Mathematical Society of Japan},
	VOLUME = {9},
	YEAR = {1957},
	PAGES = {464--492},
	ISSN = {0025-5645,1881-1167},
	MRCLASS = {53.00},
	MRNUMBER = {95520},
	MRREVIEWER = {C.\ B.\ Allendoerfer},
	DOI = {10.2969/jmsj/00940464},
	URL = {https://doi.org/10.2969/jmsj/00940464},
}

@book {Thurston22,
	AUTHOR = {Thurston, William P.},
	TITLE = {The geometry and topology of three-manifolds. {V}ol. {IV}},
	NOTE = {Edited and with a preface by Steven P. Kerckhoff and a chapter
	by J. W. Milnor},
	PUBLISHER = {American Mathematical Society, Providence, RI},
	YEAR = {[2022] \copyright 2022},
	PAGES = {xvii+316},
	ISBN = {978-1-4704-6391-5; [9781470468361]; [9781470451646]},
	MRCLASS = {57K32 (53C15 57R30)},
	MRNUMBER = {4554426},
	MRREVIEWER = {Thilo\ Kuessner},
}

@article {Taubes2007,
	AUTHOR = {Taubes, Clifford Henry},
	TITLE = {The {S}eiberg-{W}itten equations and the {W}einstein
	conjecture},
	JOURNAL = {Geom. Topol.},
	FJOURNAL = {Geometry \& Topology},
	VOLUME = {11},
	YEAR = {2007},
	PAGES = {2117--2202},
	ISSN = {1465-3060,1364-0380},
	MRCLASS = {57R17 (53D35 57R57 58J30)},
	MRNUMBER = {2350473},
	MRREVIEWER = {Hans\ U.\ Boden},
	DOI = {10.2140/gt.2007.11.2117},
	URL = {https://doi.org/10.2140/gt.2007.11.2117},
}

@misc{SpectralAsymp2022,
	title={Spectral asymptotics for sub-Riemannian Laplacians}, 
	author={Colin de Verdi\`ere, Yves and Hillairet, Luc and Tr\'{e}lat,
	Emmanuel},
	year={2022},
	eprint={2212.02920},
	archivePrefix={arXiv},
	primaryClass={math.DG},
	url={https://arxiv.org/abs/2212.02920}, 
}

@article {SpectralAsym1,
	AUTHOR = {Colin de Verdi\`ere, Yves and Hillairet, Luc and Tr\'{e}lat,
	Emmanuel},
	TITLE = {Spectral asymptotics for sub-{R}iemannian {L}aplacians, {I}:
	{Q}uantum ergodicity and quantum limits in the 3-dimensional
	contact case},
	JOURNAL = {Duke Math. J.},
	FJOURNAL = {Duke Mathematical Journal},
	VOLUME = {167},
	YEAR = {2018},
	NUMBER = {1},
	PAGES = {109--174},
	ISSN = {0012-7094,1547-7398},
	MRCLASS = {58J37 (35P20 35R01 53C17 58J51)},
	MRNUMBER = {3743700},
	MRREVIEWER = {Jingzhi\ Tie},
	DOI = {10.1215/00127094-2017-0037},
	URL = {https://doi.org/10.1215/00127094-2017-0037},
}

@article {BenArous89,
	AUTHOR = {Ben Arous, G\'{e}rard},
	TITLE = {D\'{e}veloppement asymptotique du noyau de la chaleur
	hypoelliptique sur la diagonale},
	JOURNAL = {Ann. Inst. Fourier (Grenoble)},
	FJOURNAL = {Universit\'{e} de Grenoble. Annales de l'Institut Fourier},
	VOLUME = {39},
	YEAR = {1989},
	NUMBER = {1},
	PAGES = {73--99},
	ISSN = {0373-0956,1777-5310},
	MRCLASS = {58G32 (22E30 35K05 58G11 60H07)},
	MRNUMBER = {1011978},
	MRREVIEWER = {Mireille\ Chaleyat-Maurel},
	DOI = {10.5802/aif.1158},
	URL = {https://doi.org/10.5802/aif.1158},
}

@article{BenArous88,
	author  = {Ben Arous, G{\'e}rard},
	title   = {D{\'e}veloppement asymptotique du noyau de la chaleur hypoelliptique hors du cut-locus},
	journal = {Annales scientifiques de l'{\'E}cole Normale Sup{\'e}rieure},
	series  = {S{\'e}rie 4},
	volume  = {21},
	number  = {3},
	pages   = {307--331},
	year    = {1988},
	doi     = {10.24033/asens.1560}
}

@article{ShapeDrum,
	ISSN = {00029890, 19300972},
	URL = {http://www.jstor.org/stable/2313748},
	author = {Mark Kac},
	journal = {The American Mathematical Monthly},
	number = {4},
	pages = {1--23},
	publisher = {[Taylor & Francis, Ltd., Mathematical Association of America]},
	title = {Can One Hear the Shape of a Drum?},
	urldate = {2026-01-20},
	volume = {73},
	year = {1966}
}

@book {Rosenberg,
	AUTHOR = {Rosenberg, Steven},
	TITLE = {The {L}aplacian on a {R}iemannian manifold},
	SERIES = {London Mathematical Society Student Texts},
	VOLUME = {31},
	NOTE = {An introduction to analysis on manifolds},
	PUBLISHER = {Cambridge University Press, Cambridge},
	YEAR = {1997},
	PAGES = {x+172},
	ISBN = {0-521-46300-9; 0-521-46831-0},
	MRCLASS = {58Gxx (35J05 35K05)},
	MRNUMBER = {1462892},
	MRREVIEWER = {Friedbert\ Pr\"{u}fer},
	DOI = {10.1017/CBO9780511623783},
	URL = {https://doi.org/10.1017/CBO9780511623783},
}

@article {BarBesLer2020,
	AUTHOR = {Barilari, Davide and Beschastnyi, Ivan and Lerario, Antonio},
	TITLE = {Volume of small balls and sub-{R}iemannian curvature in 3{D}
	contact manifolds},
	JOURNAL = {J. Symplectic Geom.},
	FJOURNAL = {The Journal of Symplectic Geometry},
	VOLUME = {18},
	YEAR = {2020},
	NUMBER = {2},
	PAGES = {355--384},
	ISSN = {1527-5256,1540-2347},
	MRCLASS = {53C17 (53D10)},
	MRNUMBER = {4118144},
	MRREVIEWER = {Luca\ Rizzi},
	DOI = {10.4310/jsg.2020.v18.n2.a1},
	URL = {https://doi.org/10.4310/jsg.2020.v18.n2.a1},
}

@article {BellBosc2023,
	AUTHOR = {Bellini, Eugenio and Boscain, Ugo},
	TITLE = {Surfaces of genus {$g\geq1$} in 3{D} contact sub-{R}iemannian
	manifolds},
	JOURNAL = {ESAIM Control Optim. Calc. Var.},
	FJOURNAL = {ESAIM. Control, Optimisation and Calculus of Variations},
	VOLUME = {29},
	YEAR = {2023},
	PAGES = {Paper No. 79, 8},
	ISSN = {1292-8119,1262-3377},
	MRCLASS = {53D10 (49Q20 53C17)},
	MRNUMBER = {4664851},
	MRREVIEWER = {Haiming\ Liu},
	DOI = {10.1051/cocv/2023072},
	URL = {https://doi.org/10.1051/cocv/2023072},
}

@article {Bell2023,
	AUTHOR = {Bellini, Eugenio},
	TITLE = {The geometry of {R}iemannian curvature radii},
	JOURNAL = {J. Dyn. Control Syst.},
	FJOURNAL = {Journal of Dynamical and Control Systems},
	VOLUME = {29},
	YEAR = {2023},
	NUMBER = {4},
	PAGES = {1829--1853},
	ISSN = {1079-2724,1573-8698},
	MRCLASS = {53C17 (20F45)},
	MRNUMBER = {4673183},
	MRREVIEWER = {Alexey\ V.\ Podobryaev},
	DOI = {10.1007/s10883-023-09664-y},
	URL = {https://doi.org/10.1007/s10883-023-09664-y},
}

@article {BarBoscCann2022,
	AUTHOR = {Barilari, Davide and Boscain, Ugo and Cannarsa, Daniele},
	TITLE = {On the induced geometry on surfaces in 3{D} contact
	sub-{R}iemannian manifolds},
	JOURNAL = {ESAIM Control Optim. Calc. Var.},
	FJOURNAL = {ESAIM. Control, Optimisation and Calculus of Variations},
	VOLUME = {28},
	YEAR = {2022},
	PAGES = {Paper No. 9, 28},
	ISSN = {1292-8119,1262-3377},
	MRCLASS = {53C17 (53A05)},
	MRNUMBER = {4371078},
	MRREVIEWER = {Haiming\ Liu},
	DOI = {10.1051/cocv/2021104},
	URL = {https://doi.org/10.1051/cocv/2021104},
}

@misc{BarBelPina2025,
	author        = {Barilari, Davide and Bellini, Eugenio and Pinamonti, Andrea},
	title         = {Curvature measures and the sub-Riemannian {G}auss--{B}onnet theorem},
	year          = {2025},
	eprint        = {2509.26460},
	archivePrefix = {arXiv},
	primaryClass  = {math.DG},
	doi           = {10.48550/arXiv.2509.26460},
	url           = {https://arxiv.org/abs/2509.26460},
	note          = {arXiv:2509.26460}
}

@article {AgrachevRizziRossi24,
	AUTHOR = {Agrachev, Andrei and Rizzi, Luca and Rossi, Tommaso},
	TITLE = {Relative heat content asymptotics for sub-{R}iemannian
	manifolds},
	JOURNAL = {Anal. PDE},
	FJOURNAL = {Analysis \& PDE},
	VOLUME = {17},
	YEAR = {2024},
	NUMBER = {9},
	PAGES = {2997--3037},
	ISSN = {2157-5045,1948-206X},
	MRCLASS = {53C17 (35K15 35R01 58J35 58J60)},
	MRNUMBER = {4818197},
	MRREVIEWER = {Robert\ Weston\ Neel},
	DOI = {10.2140/apde.2024.17.2997},
	URL = {https://doi.org/10.2140/apde.2024.17.2997},
}

@article {Strichartz86,
	AUTHOR = {Strichartz, Robert S.},
	TITLE = {Sub-{R}iemannian geometry},
	JOURNAL = {J. Differential Geom.},
	FJOURNAL = {Journal of Differential Geometry},
	VOLUME = {24},
	YEAR = {1986},
	NUMBER = {2},
	PAGES = {221--263},
	ISSN = {0022-040X,1945-743X},
	MRCLASS = {53C20 (53A40 53C21 53C22 58G30)},
	MRNUMBER = {862049},
	MRREVIEWER = {Karsten\ Grove},
	URL = {http://projecteuclid.org/euclid.jdg/1214440436},
}

@article {RizziRossi21,
	AUTHOR = {Rizzi, Luca and Rossi, Tommaso},
	TITLE = {Heat content asymptotics for sub-{R}iemannian manifolds},
	JOURNAL = {J. Math. Pures Appl. (9)},
	FJOURNAL = {Journal de Math\'{e}matiques Pures et Appliqu\'{e}es.
	Neuvi\`eme S\'{e}rie},
	VOLUME = {148},
	YEAR = {2021},
	PAGES = {267--307},
	ISSN = {0021-7824,1776-3371},
	MRCLASS = {53C17 (35R01 58J35 58J60)},
	MRNUMBER = {4223354},
	MRREVIEWER = {Jing\ Wang},
	DOI = {10.1016/j.matpur.2020.12.004},
	URL = {https://doi.org/10.1016/j.matpur.2020.12.004},
}

@article{Rossi21,
	author = {Tommaso Rossi},
	title = {The {Relative} {Heat} {Content} for {Submanifolds} in {Sub-Riemannian} {Geometry}},
	journal = {S\'eminaire de th\'eorie spectrale et g\'eom\'etrie},
	pages = {191--212},
	year = {2019-2021},
	publisher = {Institut Fourier},
	address = {Grenoble},
	volume = {36},
	doi = {10.5802/tsg.376},
	language = {en},
	url = {https://proceedings.centre-mersenne.org/articles/10.5802/tsg.376/}
}

@article {TysonWang18,
	AUTHOR = {Tyson, Jeremy and Wang, Jing},
	TITLE = {Heat content and horizontal mean curvature on the {H}eisenberg
	group},
	JOURNAL = {Comm. Partial Differential Equations},
	FJOURNAL = {Communications in Partial Differential Equations},
	VOLUME = {43},
	YEAR = {2018},
	NUMBER = {3},
	PAGES = {467--505},
	ISSN = {0360-5302,1532-4133},
	MRCLASS = {53C17 (35K05 35R03 58J65)},
	MRNUMBER = {3804205},
	MRREVIEWER = {Andrea\ Pinamonti},
	DOI = {10.1080/03605302.2018.1446166},
	URL = {https://doi.org/10.1080/03605302.2018.1446166},
}

@article {BarBoscNeel2012,
	AUTHOR = {Barilari, Davide and Boscain, Ugo and Neel, Robert W.},
	TITLE = {Small-time heat kernel asymptotics at the sub-{R}iemannian cut
	locus},
	JOURNAL = {J. Differential Geom.},
	FJOURNAL = {Journal of Differential Geometry},
	VOLUME = {92},
	YEAR = {2012},
	NUMBER = {3},
	PAGES = {373--416},
	ISSN = {0022-040X,1945-743X},
	MRCLASS = {58J35 (53Cxx)},
	MRNUMBER = {3005058},
	MRREVIEWER = {Weiyong\ He},
	URL = {http://projecteuclid.org/euclid.jdg/1354110195},
}

@book {Sakai1996,
	AUTHOR = {Sakai, Takashi},
	TITLE = {Riemannian geometry},
	SERIES = {Translations of Mathematical Monographs},
	VOLUME = {149},
	NOTE = {Translated from the 1992 Japanese original by the author},
	PUBLISHER = {American Mathematical Society, Providence, RI},
	YEAR = {1996},
	PAGES = {xiv+358},
	ISBN = {0-8218-0284-4},
	MRCLASS = {53-01 (53-02)},
	MRNUMBER = {1390760},
	MRREVIEWER = {Conrad\ Plaut},
	DOI = {10.1090/mmono/149},
	URL = {https://doi.org/10.1090/mmono/149},
}
\author{Eugenio Bellini,}
\address{Dipartimento di Matematica "Tullio Levi-Civita", Universit\`a degli Studi di Padova, via Trieste 63, 35131 Padova (PD), Italy. email:  \textbf{\emph{eugenio.bellini@unipd.it}}}
\end{document}